\documentclass[11pt,leqno]{amsart} 
\usepackage[margin=1.2in]{geometry} 
\usepackage{kbpn}    
 \usepackage{xypic} 
 \usepackage{pdflscape}
 \usepackage{adjustbox}
 
\title[Syntomic cohomology of $\BP\langle n\rangle$]{Syntomic cohomology of truncated\\ 
 Brown--Peterson spectra}
\author[G.~Angelini-Knoll]{Gabriel Angelini-Knoll}
\address{Department of Mathematics, Applied Mathematics and Statistics, Case Western Reserve University, Cleveland, OH, USA}
\email{gabriel.angelini-knoll@case.edu}
      
\begin{document}

\begin{abstract}     
We compute the $\MU$-based syntomic cohomologies, mod $(p,v_1,\cdots,v_n)$, of all $\bE_1$ $\MU$-algebra forms of the truncated Brown--Peterson spectrum $\BP\langle n\rangle$. 
As qualitative consequences, we resolve the Lichtenbaum--Quillen, telescope, and redshift questions for the algebraic K-theories of all $\bE_1$ $\MU$-algebra forms of $\BP \langle n\rangle$. 
This extends work of Hahn and Wilson. 
We also explicitly compute the mod $(p,v_1,v_2)$ algebraic K-theory of arbitrary $\bE_1$ $\MU$-algebra forms of $\BP\langle 2\rangle$ at all primes $p\ge 5$ 
extending previous work of the author, Ausoni, Culver, H\"oning, and Rognes.  
Additionally, we present a new computation of mod $(p,v_1,v_2,v_3)$ algebraic K-theory of arbitrary $\bE_1$ $\MU$-algebra forms of $\BP\langle 3\rangle$ at all primes $p\ge 7$, the first explicit computation of algebraic K-theory of an $\mathbb{E}_1$-ring of height $3$. 
\end{abstract}  
           
\maketitle                 
\tableofcontents       
          
\section{Introduction}
By celebrated work of Quillen~\cite{Qui69}, complex cobordism provides a bridge between geometric topology and algebraic geometry. 
Working $p$-locally, a homology theory called Brown--Peterson homology $\mathrm{BP}$ splits off of complex cobordism. 
This homology theory has coefficients a polynomial algebra over the $p$-local integers with generators $v_i$ for $i\ge 1$. 
Complex oriented homology theories carry a formal group law, and the complex oriented Brown--Peterson homology theory carries the universal $p$-typical (1-dimensional) formal group law. 
By Lazard~\cite{Laz55}, $p$-typical formal group laws are classified by their height and we may ask whether there is a complex oriented homology theory that carries a formal group law of height $\le n$. 
One way to build such a ring spectrum is to take a quasi-regular quotient of Brown--Peterson homology to produce truncated Brown--Peterson homology $\mathrm{BP}\langle n\rangle$ 
with coefficients a polynomial algebra over the $p$-local integers on generators $v_1$, $v_2$, $\dots$ , $v_n$. By convention, we set $\mathrm{BP}\langle -1\rangle=\mathbb{F}_p$. 

Structured ring spectra play a fundamental role in homotopy theory, for example more highly structured ring spectra are equipped with a richer notion of power operations. 
It is known that complex cobordism is an $\bE_\infty$-ring, which means it is a homology theory equipped with a coherent commutative multiplication. It was hoped that the truncated Brown--Peterson homology theories would have similar structure. 
It was shown that $\bE_\infty$-ring structures exist on $\BP\langle n\rangle$ for $n=-1,0,1$ at all primes and for $n=2$ at the primes $p=2,3$ by~\cite{BR08,HL10,LN12,LN14}. However, it was proven by Lawson and Senger~\cite{Law18,Sen24} that $\BP\langle n\rangle$ do not have $\bE_{\infty}$-ring structures when $n\ge 4$, so we do not live in the best of all possible worlds.

Producing $\bE_m$-ring structures on truncated Brown--Peterson spectra for $m>1$ at arbitrary heights $n$ remained an open problem until Hahn--Wilson~\cite{HW22} proved 
that there exist $\bE_3$ $\MU$-algebra forms of truncated Brown--Peterson spectra for all primes $p$ and heights $n$. These forms depend on a choice of generators $v_i$. The most common choice would be to choose the Araki or Hazewinkel generators. 
However, it was observed by Strickland~\cite[Remark~6.5]{Str99} at the prime $2$ that, although these forms of truncated Brown--Peterson spectra are equipped with $\bE_1$-ring structures, they are not equipped with additional structure. In particular, they are not equipped with $\bE_2$-ring structures.

To compute invariants of \emph{all} $\bE_1$ $\MU$-algebra forms of truncated Brown--Peterson spectra we are therefore forced to work without additional structure. 
The algebraic K-theory and topological Hochschild homology of an $\bE_1$ $\MU$-algebra is only an $\bE_0$ algebra over the corresponding invariant for $\MU$, where an $\bE_0$ algebra simply consists of a unit map.
Nevertheless, the fact that the unit map $\MU_{(p)}\to \BP\langle n\rangle$ is a $\pi_*$-surjection allows us to proceed. We therefore resolve the redshift, Lichtenbaum--Quillen and telescope questions in the case of algebraic K-theory of arbitrary $\mathbb{E}_1$-$\mathrm{MU}$-algebra forms of truncated Brown--Peterson spectra, see~\cite[\S~1.1]{AKHW24} for details. 
Let $T(n)=\mathbb{S}/(p^{i_0},v_1^{i_1}\cdots ,v_{n-1}^{i_{n-1}})[v_n^{-1}]$ for suitably large integers $i_0,i_1,\cdots ,i_{n-1}$. We write $L_n$ for the Bousfield localization at $\mathbb{Q}\oplus K(1)\oplus \cdots \oplus K(n)$ where $K(i)$ is the $i$-th Morava K-theory at the prime $p$. We also write $L_n^f$ for the Bousfield localization at $\mathbb{Q}\oplus T(1)\oplus \cdots \oplus T(n)$. 
\begin{thmx}[\autoref{cor:LQC-K-theory}, \autoref{cor:telescope} and \autoref{cor:redshift}]\label{thm:main-qualitative-thm}
Let $p$ be a prime number and $n\ge -1$ be an integer. Let $\BP\langle n\rangle$ be an arbitrary $\bE_1$ $\MU$-algebra form of the $n$-th truncated Brown--Peterson spectra at the prime $p$, see \autoref{form def}. Then 
\begin{enumerate}
\item \label{it:redshift} (Redshift) The spectrum $T(n+1)\otimes \K(\BP\langle n\rangle)$ is nonzero. 
\item  \label{it:telescope} (Telescope) The  localization map $L_{n+1}^f\K(\BP\langle n\rangle)\to L_{n+1}\K(\BP\langle n\rangle)$ is an equivalence.  
\item \label{LQ} (Lichtenbaum--Quillen) The localization map $\K(\BP\langle n\rangle)_{(p)}\longrightarrow L_{n+1}^f\K(\BP\langle n\rangle)_{(p)}$ has bounded above fiber.  
\end{enumerate}
\end{thmx}
\begin{remark}
When $n=-1,0,1$, \autoref{thm:main-qualitative-thm}~\eqref{it:redshift},~\eqref{LQ} were first proven by \cite{Qui72}, \cite{BM94}, and \cite{AR02} at the primes $p\ge 2$, $p\ge 3$, and $p\ge 5$ respectively. For arbitrary $\bE_3$ $\MU$-algebra forms of truncated Brown--Peterson spectra Hahn--Wilson proved \autoref{thm:main-qualitative-thm}~\eqref{it:redshift},~\eqref{LQ}  in~\cite{HW22}. The cases $n=-1,0,1$ of \autoref{thm:main-qualitative-thm}~\eqref{it:telescope} were already known by \cite{Mah82}, \cite{Mil81}, and \cite{HRW22}. 
Our theorem extends these results to arbitrary $\bE_1$ $\MU$-algebra forms of truncated Brown--Peterson spectra, such as those arising from the choice of Araki or Hazewinkle generators, at arbitrary heights $n$ and primes $p$.  
\end{remark}

\autoref{thm:main-qualitative-thm} follows from the following explicit computation. 
\begin{thmx}[\autoref{thm:syntomic-BPn} and \autoref{mod-(p ,...,v_n) syntomic}]\label{thm:main-computation}
Let $n\ge -1$ be an integer and let $\langle n+1\rangle=\{1,2,\cdots ,n+1\}$. Let $\BP\langle n\rangle$ be an arbitrary $\bE_1$ $\MU$-algebra form of the $n$-th truncated Brown--Peterson spectra at the prime $p$, see \autoref{form def}.
We can identify the mod $(p,v_1,\cdots ,v_{n})$ syntomic cohomology of $\BP\langle n\rangle$ with
\[
\mathbb{F}_p[v_{n+1}]\otimes \left ( \bF_p\langle \partial,\lambda_1,\lambda_2,\cdots,\lambda_{n+1}\rangle \oplus \bigoplus_{j=1}^{n+1}\bF_p\langle \lambda_s :s\in \langle n+1\rangle -\{i\}\rangle\{\Xi_{j,d} :0<d<p\} \right )
\]
as a bigraded $\bF_p[v_{n+1}]$-modules. Here the classes are in bidegrees  $\|v_{n+1}\|=(2p^{n+1}-2,0)$, $\|\partial\|=(-1,1)$, $\|\lambda_i\|=(2p^i-1,1)$,  and $\|\Xi_{j,d}\|=(2p^j-1-2dp^{j-1},1)$ for each $1\le i,j\le n+1$ and $0<d<p$. 
\end{thmx}
Here and throughout we write $R\langle x_1,x_2,\cdots ,x_n\rangle$ for an exterior algebra over a ring $R$ with generators $x_1,x_2,\cdots ,x_n$ and $R[y_1,y_2,\cdots ,y_m]$ for a polynomial algebra over a ring $R$ with generators $y_1,y_2,\cdots ,y_m$. 

\begin{remark}
When $n=-1,0,1$, \autoref{thm:main-computation} was already known by~\cite{BMS19},~\cite{LW22} and~\cite{HRW22} respectively. 
\end{remark}

\begin{figure}[!ht]
\resizebox{\textwidth}{!}{ 
\begin{tikzpicture}[radius=1,xscale=1.3,yscale=2]
\foreach \n in {-2,0,...,26} \node [below] at (\n,-.8-1) {$\n$};
\foreach \s in {-1,0,...,5} \node [left] at (-.3-2,\s) {$\s$};
\draw [thin,color=lightgray] (-2,-1) grid (26,5);
\node [below] at (0,0) {$1$};

\node [below] at (-1,1) {$\partial$};
\node [below] at (1,1) {$\Xi_{1,1}$};
\node [above] at (3,1) {$\lambda_1$};
\node [below] at (3,1) {$\Xi_{2,1}$};
\node [above] at (7,1) {$\lambda_2$};
\node [below] at (7,1) {$\Xi_{3,1}$};
\node [below] at (15,1) {$\lambda_3$};

\node [below] at (2,2) {$\partial\lambda_1$};
\node [above] at (6,2) {$\partial\lambda_2$};
\node [below] at (6,2) {$\lambda_1\Xi_{2,1}$};
\node [below] at (8,2) {$\Xi_{1,1}\lambda_2$};
\node [above] at (10,2) {$\lambda_1\lambda_2$};
\node [below] at (10,2) {$\lambda_1\Xi_{3,1}$};
\node [below] at (14,2) {$\partial\lambda_3$};
\node [above] at (14,2) {$\lambda_2\Xi_{3,1}$};
\node [above] at (16,2) {$\Xi_{1,1}\lambda_3$};
\node [below] at (18,2) {$\lambda_1\lambda_3$};
\node [above] at (18,2) {$\Xi_{2,1}\lambda_3$};
\node [above] at (22,2) {$\lambda_2\lambda_3$};

\node [above] at (9,3) {$\partial\lambda_1\lambda_2$};
\node [above] at (17,3) {$\partial\lambda_1\lambda_3$};
\node [below] at (17,3) {$\lambda_1\lambda_2\Xi_{3,1}$};
\node [above] at (21,3) {$\lambda_1\Xi_{2,1}\lambda_3$};
\node [below] at (21,3) {$\partial\lambda_2\lambda_3$};
\node [above] at (23,3) {$\Xi_{1,1}\lambda_2\lambda_3$};
\node [above] at (25,3) {$\lambda_1\lambda_2\lambda_3$};

\node [above] at (24,4) {$\partial\lambda_1\lambda_2\lambda_3$};
\end{tikzpicture}
}
\caption{The mod $(2,v_1,v_2,v_3)$-syntomic cohomology of $\BP\langle 2\rangle$. 
\label{fig:syntomicBP2}
}
\end{figure}

\begin{remark}
The syntomic cohomology computed in \autoref{thm:main-computation} and depicted in \autoref{fig:syntomicBP2} at $n=p=2$ suggests a certain rotational symmetry akin to the Lagrangian refinement of Tate duality of~\cite{Bha}. 
A higher height analogue of this duality is the subject of ongoing work of Devalapurkar--Hahn--Rognes, see~\cite{Rog25}.
\end{remark}

When $n=2$ and $p\ge 5$, we argue that the motivic spectral sequence collapses and we can prove the following. 

\begin{thmx}[\autoref{TCBP2} and \autoref{KBP2}]\label{thm:TC}
Let $p\ge 5$ and let $\BP\langle 2\rangle$ be an arbitrary $\bE_1$ $\MU$-algebra form of the second truncated Brown--Peterson spectrum at the prime $p$, see \autoref{form def}. Let $\langle 3\rangle=\{1,2,3\}$. We can identify $\pi_*(\TC(\BP\langle 2\rangle)/(p,v_1,v_2))$ with 
\[
\bF_p[v_3] \otimes \left ( \mathbb{F}_p\langle \partial,\lambda_1,\lambda_2,\lambda_{3}\rangle \oplus \bigoplus_{j=1}^{3}\mathbb{F}_p\langle \lambda_s :s\in \langle 3\rangle-\{j\}\rangle\{\Xi_{j,d} :0<d<p\} \right )
\]
where $|v_3|=2p^3-2$, $|\partial|=-1$, $|\lambda_j|=2p^i-1$ and $|\Xi_{j,d}|=2p^j-1-2dp^{j-1}$ for each $1\le i, j\le 3$ and $0<d<p$. There is a long exact sequence 
\[ 
0 \to \Sigma^{-2}M_2\to \pi_*(\K(\BP\langle 2\rangle)/(p,v_1,v_2))\to \pi_*(\TC(\BP\langle 2\rangle)/(p,v_1,v_2))\to \Sigma^{-1}\bF_p\{\partial\}\to 0
\]
where  $M_2=\bF_p\{ \overline{\tau}_1,\overline{\tau}_2,\overline{\tau}_1\overline{\tau}_2\}$ and $|\tau_i|=2p^i-1$ for $i=1,2$. 
\end{thmx}

\begin{remark}
When $n=2$, \autoref{thm:TC} was first proven by~\cite{AKACHR25} at primes $p\ge 7$ for arbitrary $\bE_{3}$ $\MU$-algebra forms. We extend this result to the prime $p=5$ and to arbitrary $\bE_{1}$ $\MU$-algebra forms.
\end{remark}

We can also show that the motivic spectral sequence collapses when $n=3$ and $p\ge 7$. 

\begin{thmx}[\autoref{TCBP3} and \autoref{KBP3}]\label{thm:TCBP3}
Let $p\ge 7$ and let $\BP\langle 3 \rangle$ be an arbitrary $\bE_1$ $\MU$-algebra form of the third truncated Brown--Peterson spectrum at the prime $p$, see \autoref{form def}. Let $\langle 4\rangle =\{1,2,3,4\}$. We can identify $\pi_*(\TC(\BP\langle 3\rangle)/(p,v_1,v_2,v_3))$ with
\[
\bF_p[v_4]\otimes \left ( \mathbb{F}_p\langle \partial,\lambda_1,\lambda_2,\lambda_{3},\lambda_4\rangle \oplus \bigoplus_{j=1}^{4}\bF_p\langle \lambda_s :s\in \langle 4\rangle-\{j\}\rangle\{\Xi_{j,d} :0<d<p\} \right )
\]
where $|v_4|=2p^4-2$, $|\partial|=-1$, $|\lambda_i|=2p^i-1$ and $|\Xi_{j,d}|=2p^j-1-2dp^{j-1}$ for each $1\le i,j\le 4$ and $0<d<p$. There is a long exact sequence 
\[ 
0 \to \Sigma^{-2}M_3\to \K_*(\BP\langle 3\rangle)/(p,v_1,v_2,v_3)\to \TC_*(\BP\langle 3\rangle)/(p,v_1,v_2,v_3)\to \Sigma^{-1}\bF_p\{\partial\}\to 0
\]
where $M_3=\mathbb{F}_p\{\overline{\tau}_1,\overline{\tau}_2,\overline{\tau}_3,\overline{\tau}_1\overline{\tau}_2,\overline{\tau}_1\overline{\tau}_3,\overline{\tau}_2\overline{\tau}_3,\overline{\tau}_1\overline{\tau}_2\overline{\tau}_3 \}$ and $|\overline{\tau}_i|=2p^i-1$ for $i=1,2,3$. 
\end{thmx}

\begin{remark}
\autoref{thm:TC} and \autoref{thm:TCBP3} solve the ``chromatic redshift problem'' of \cite{Rog00} in the affirmative. In particular, \autoref{thm:TCBP3} is the first example of a resolution of this problem for a $\mathbb{E}_1$-ring of height $3$ in the sense of \autoref{def-height}. 
\end{remark}

\begin{remark}
When $n>3$, the answer for syntomic cohomology is exactly the answer predicted by Ausoni and Rognes~\cite[p.~5]{AR02}, as stated  precisely in~\cite[\S~2]{Rog01}, for topological cyclic homology of $\BP\langle n\rangle$ modulo $(p,v_1,\cdots ,v_n)$. For their prediction to hold on the nose, one would need to show that the motivic spectral sequence collapses at the $\EE_2$-term, which is not immediate. Moreover, the spectrum $\mathbb{S}/(p,v_1,\cdots,v_n)$ doesn't exist at small primes and it is not known whether it exists when $n>3$. 
\end{remark}

\begin{remark}
In fact, using \autoref{thm:main-computation}, T. Yang~\cite[Theorem~2.7]{Yan25} has provided a precise bound in the statement of \autoref{thm:main-qualitative-thm} \eqref{LQ}, in particular the map  
\[ \pi_*\K(\BP\langle n\rangle )_{(p)} \longrightarrow \pi_*L_{n+1}^f\K(\BP\langle n\rangle)_{(p)}\]
is an isomorphism in positive degrees. 
\end{remark}
 
\subsection{Acknowledgements}
The author is indebted to Jeremy Hahn and Dylan Wilson who contributed significantly to this project, but declined to be co-authors. 
Additionally, the author benefited from input from Christian Ausoni, H. \"Ozg\"ur Bayindir, Robert Burklund, Ishan Levy, Piotr Pstr\k{a}gowski, Arpon Raksit, John Rognes, Andrew Senger, and Tristan Yang. The author would also like to thank Rin Ray and Shay Ben-Moshe for helpful feedback on a previous draft. The author is also grateful to the Max Planck Institute for Mathematics in Bonn for its hospitality and financial support.  

\section{Hochschild homology}\label{sec:HochschildBPn}
First, we define forms of $\BP\langle n\rangle$ in \autoref{sec:forms} and the $\MU$-based motivic filtration in \autoref{sec:mot}. 
We compute Hochschild homology of $\BP \langle n \rangle$ with $\bF_{p}$-coefficients in \autoref{sec:HH} along with its motivic filtrations in \autoref{sec:motHH}. 

\subsection{Forms of \texorpdfstring{$\BP\langle n\rangle$}{BPn}}\label{sec:forms}
By~\cite{BM13}, there is an $\bE_4$-ring unit map
\[
\BP \to \MU_{(p)}\,.
\]
While $\pi_*\BP$ is only non-canonically isomorphic to a polynomial ring $\mathbb{Z}_{(p)}[v_1,v_2,\cdots]$, for each $n \ge 1$ the subring $\mathbb{Z}_{(p)}[v_1,v_2,\cdots,v_n] \subset \pi_*\BP$ is well-defined. 
Indeed, it is the subring generated by all elements of degree at most $2p^n-2$.  
Based on this, the following definition is standard:

\begin{defin}\label{form def}
Let $m\ge 0$ be an integer. An \emph{$\bE_m$ $\MU$-algebra form of $\BP \langle n \rangle$} is a $p$-local $\bE_m$ $\MU$-algebra $R$ such that the composite
\[
\mathbb{Z}_{(p)}[v_1,v_2,\cdots,v_n] \subset \BP_* \subset \pi_*\MU_{(p)} \to \pi_*R
\]
is an isomorphism.  The last map in this composite is the $p$-localized unit of the $\bE_m$ $\MU$-algebra structure.
\end{defin}
In particular, the Eilenberg--MacLane spectrum $\mathbb{Z}_{(p)}$ is an $\bE_{\infty}$ $\MU$-algebra form of $\BP \langle 0 \rangle$ and the Adams summand $\ell$ of $p$-local topological K-theory $\mathrm{ku}$ is a $\bE_\infty$ $\MU$-algebra form of $\BP \langle 1 \rangle$ by~\cite[Proposition~6.2.1]{HRW22}. At $p=2$, the spectrum $\mathrm{tmf}_1(3)$ is an $\mathbb{E}_\infty$ $\MU$-algebra form of $\BP \langle 2 \rangle$ by \cite[Theorem~1.7]{Sen23}. By convention, the Eilenberg--MacLane spectrum $\mathbb{F}_p$ is a $\mathbb{E}_\infty$ $\MU$-algebra form of $\BP\langle -1\rangle$. 

In this paper, we will be interested in the algebraic $K$-theories of $\bE_1$ $\MU$-algebra forms of $\BP \langle n \rangle$.  Of course, algebraic $K$-theory depends only on the underlying $\bE_1$ $\mathbb{S}$-algebra structure, but we will exploit $\bE_1$ $\MU$-algebra structure when making our computations.

In the remainder of this paper, we fix particular (but arbitrary) $\bE_1$ $\MU$-algebra forms of $\BP \langle n \rangle$, according to the convention below. Since the choices are arbitrary, the theorems we prove about $\BP \langle n \rangle$ hold for all $\bE_1$ $\MU$-algebra forms.

\begin{convention}
For the remainder of this paper, we use the symbol $\BPn$ to denote a fixed (but arbitrary) $\bE_1$ $\MU$-algebra form of $\BPn$. 

We also fix for each $i \ge 1$ an indecomposable polynomial generator 
\[
    v_i \in \pi_{2p^i-2} \BP\,,
\]
and denote also by $v_i$ the image of this class under the map $\pi_{2p^i-2} \BP \to \pi_{2p^i-2} \MU_{(p)}$.  We make these choices such that, for each $i>n$, the class $v_i$ maps to zero in $\pi_{2p^i-2} \BP \langle n \rangle.$
\end{convention}

\begin{convention}
Throughout the paper, whenever a prime $p$, the notations $T(n)$, $L_n$, $L_n^f$ and $\BP\langle n\rangle$ appear in a statement then the implicit primes are always the same and agree with the given prime $p$ if it appears. 
\end{convention} 
\subsection{The motivic filtration}\label{sec:mot}
We review the definition of the motivic filtration relative to $\MU$, defined on the topological Hochschild homology of any $\bE_1$ $\MU$-algebra by \cite{HRW22}. 

We say a spectrum $X$ is even if $\pi_{2k-1}X=0$ for all integers $k$.   Let $\mathrm{CAlg}$ denote the $\infty$-category of $\mathbb{E}_{\infty}$-rings in spectra, let $\mathrm{CAlg}^{\ev}$ denote the full sub-$\infty$-category of  $\mathrm{CAlg}$ whose underlying spectra are even and let $\mathrm{CAlg}_p^{\ev}$ denote the full sub-$\infty$-category of the $\infty$-category of $\mathbb{E}_{\infty}$-rings that are $p$-complete consisting of those with bounded $p$-power torsion. By a filtered object in an $\infty$-category $\mathcal{C}$, we mean a functor $\mathbb{Z}^{\textup{op}}\to \mathcal{C}$ where $\mathbb{Z}^{\textup{op}}$ is the category whose objects are integers such that $\mathbb{Z}^{\textup{op}}(i,j)=*$ if $i\ge j$ and empty otherwise. By a graded object in an $\infty$-category $\mathcal{C}$, we mean a functor $\mathrm{dis}(\mathbb{Z})\to \mathcal{C}$ from the discrete category $\mathrm{dis}(\mathbb{Z})$ whose objects are integers to $\mathcal{C}$. We write $\mathrm{Fil}(\mathcal{C})$ for the $\infty$-category of filtered objects in $\mathcal{C}$ and $\mathrm{Gr}(\mathcal{C})$ for the $\infty$-category of graded objects in $\mathcal{C}$. Recall that the associated graded functor 
\[ \mathrm{gr}^* : \mathrm{Fil}(\mathcal{C}) \to \mathrm{Gr}(\mathcal{C}) \]
is a symmetric monoidal left adjoint when $\mathcal{C}$ is a presentably symmetric monoidal stable $\infty$-category where $\mathrm{Fil}(\mathcal{C})$ and $\mathrm{Gr}(\mathcal{C})$ are equipped with the Day convolution symmetric monoidal structure. Recall that the $d$-speed Whitehead filtration functor on the $\infty$-category of spectra
\[ \tau_{\ge d*} : \mathrm{Sp}\to \mathrm{Fil}(\mathrm{Sp}) \]
is lax symmetric monoidal for any integer $d\ge 1$ and thefore it restricts to a functor 
\[ \tau_{\ge d*} : \mathrm{Alg}(\mathrm{Sp})\to \mathrm{Alg}(\mathrm{Fil}(\mathrm{Sp})) \]
from the infinity category $\mathrm{Alg}(\mathrm{Sp})$ of $\mathbb{E}_1$-rings to the $\infty$-category of $\mathbb{E}_1$-algebras in filtered spectra.

\begin{defin}[{\cite[Proposition~A.1.2,~Construction~A.1.3, and Remark A.1.4]{HRW22}}]
Let $A$ be an $\mathbb{E}_{\infty}$-ring and let $M$ be an $A$-module. We define the even filtration as the limit  
\[ \fil_{\ev/A}^*M:=\lim_{A\to B,B\in \mathrm{CAlg}^{\ev}} \tau_{\ge 2*}(M\otimes_{A}B)  \,.
\]
This is functorial in pairs $(A,M)$ where $A$ is an $\mathbb{E}_\infty$-ring and $M$ is an $A$-module. When $A=M$, we write $\fil_{\ev}^*A:=\fil_{\ev/A}^*A$ and in this case the functor takes values in the category of $\mathbb{E}_{\infty}$ algebras in filtered spectra. 
In light of this, $\fil_{\ev/A}^*M$ takes values in modules over $\fil_{\ev}^*A$ in filtered spectra. When $M$ is an $\mathbb{E}_m$-ring for $m\ge 0$ it takes values in $\mathbb{E}_m$ $\fil_{\ev}^*A$-algebras in filtered spectra. We write $\gr_{\ev/A}^*M$ for the associated graded of  $\fil_{\ev/A}^*M$ and note that when $M$ is an $\mathbb{E}_m$-ring for $m\ge 0$ it takes values in $\mathbb{E}_m$ $\gr_{\ev}^*A$-algebras in graded spectra. 

\end{defin}

A key fact that we will use throughout is the following result, which crucially builds on~\cite[Corollary~1.2]{GWX21} and~\cite[Theorem~6.12]{GIKR22}. See also~\cite[Theorem~1.2]{Pst23} and \cite[Corollary~3.18]{Pst} for a different perspective. 

\begin{thm}[{\cite[Corollary~1.1.6]{HRW22}}]\label{thm:even-motivic}
We can identify 
\[ 
\mathrm{fil}_{\ev}^*\mathbb{S} \simeq \lim_{\Delta} \tau_{\ge 2*}\left (  {\mathrm{MU}}^{\otimes \bullet+1} \right ) \,.
\]
as $\mathbb{E}_\infty$-algebras in filtered spectra. There is a monoidal equivalence
\[ 
\mathrm{Mod}_{(\mathrm{fil}_{\ev}^*\mathbb{S})_p^{\wedge}}(\mathrm{Fil}(\mathrm{Sp}))\simeq \mathrm{Mod}_{\mathbb{S}_p}(\mathrm{SH}^{\textup{cell}}(\mathbb{C}))
\] 
and a monoidal equivalence 
\[ 
\mathrm{Mod}_{(\mathrm{gr}_{\ev}^*\mathbb{S})_p^{\wedge}}(\mathrm{Fil}(\mathrm{Sp}))\simeq \mathrm{Stab}(\mathrm{CoMod}(\mathrm{BP}_*\mathrm{BP})^{\ev})\,.
\] 
Here $ \mathrm{Stab}(\mathrm{CoMod}(\mathrm{BP}_*\mathrm{BP})^{\ev})$ denotes Hovey's unbounded stable derived category of even $\mathrm{BP}_*\mathrm{BP}$-comodules and $\mathrm{Mod}_{\mathbb{S}_p}(\mathrm{SH}^{\textup{cell}}(\mathbb{C}))$ denotes the stable $\infty$-category of modules over the $p$-complete motivic sphere spectrum in the cellular stable motivic $\infty$-category over $\mathrm{Spec}(\mathbb{C})$. 
\end{thm}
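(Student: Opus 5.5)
The plan is to split the statement into two parts. First, identify $\fil_{\ev}^*\mathbb{S}$ with the $\MU$-based Adams--Novikov filtration $\lim_\Delta \tau_{\ge 2*}(\MU^{\otimes\bullet+1})$. Second, import the known descriptions of modules over that filtered ring, which are due to Gheorghe--Wang--Xu, Gheorghe--Isaksen--Krause--Ricka and Pstr\k{a}gowski.

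\emph{Step 1: even descent along $\MU$.} Let $B$ be an even $\bE_\infty$-ring. Then $B$ is complex orientable, so $\pi_*(B\otimes \MU^{\otimes k+1})\cong \pi_*B[b_1,b_2,\dots]^{\otimes k+1}$. Each $B\otimes\MU^{\otimes k+1}$ is therefore even, and the map $B\to B\otimes \MU$ is faithfully flat on homotopy. I claim that
\[ \tau_{\ge 2*}B\simeq \lim_\Delta \tau_{\ge 2*}(B\otimes \MU^{\otimes\bullet+1}). \]
To prove this, I would pass to associated gradeds. The right-hand side has graded pieces computed by the cobar complex of the flat Hopf algebroid $(\pi_*B,\pi_*B\otimes_{\MU_*}\MU_*\MU)$ with coefficients in $\pi_{2*}B$, suitably shifted. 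Faithful flatness makes this cobar complex acyclic above degree $0$, with $H^0=\pi_{2*}B$. I would then check that both filtrations are complete, which holds because the Whitehead towers converge and the connectivity grows linearly in the filtration. This step also gives the case $B=C$ for an even $C$, namely $\fil^*_{\ev}C\simeq\tau_{\ge 2*}C$: the category of even $\bE_\infty$-rings under $C$ has $C$ as its initial object.

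\emph{Step 2: exchanging limits.} Applying Step 1 inside the defining limit gives
\[ \fil_{\ev}^*\mathbb{S}=\lim_{B}\tau_{\ge 2*}B\simeq \lim_\Delta\lim_B \tau_{\ge 2*}(B\otimes\MU^{\otimes\bullet+1}). \]
For fixed $k$, the inner limit over $B$ is $\fil^*_{\ev/\mathbb{S}}$ applied to the module $\MU^{\otimes k+1}$. The functor $B\mapsto B\otimes\MU^{\otimes k+1}$ is cofinal into even rings under $\MU^{\otimes k+1}$, since any even $B'$ under $\MU^{\otimes k+1}$ receives a map from $B'\otimes\MU^{\otimes k+1}$ via multiplication. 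So the inner limit computes $\fil_{\ev}^*(\MU^{\otimes k+1})$, which by the end of Step 1 is $\tau_{\ge 2*}\MU^{\otimes k+1}$. This gives the first equivalence. It is an equivalence of $\bE_\infty$-algebras because every functor involved is lax symmetric monoidal.

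\emph{Step 3: module categories.} After $p$-completion, $\lim_\Delta\tau_{\ge 2*}\MU^{\otimes\bullet+1}$ is the bigraded endomorphism ring of the unit in even $\MU$-synthetic spectra, $\mathrm{Syn}_{\MU}^{\ev}$. Pstr\k{a}gowski and Gheorghe--Isaksen--Krause--Ricka identify $p$-complete cellular $\mathrm{Syn}^{\ev}_{\MU}$ with modules over this filtered ring, using Lurie's Schwede--Shipley/Barr--Beck for a compact generating unit. Gheorghe--Wang--Xu identify that category with $p$-complete cellular motivic spectra over $\mathbb{C}$, which gives the first monoidal equivalence. For the graded statement I would take associated gradeds. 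Then $\gr$ of the cosimplicial ring is the cobar construction on $\MU_*\MU$, placed in even degrees. Modules over its totalization are equivalent to Hovey's stable category of even comodules, again by a monadic or comonadic argument using that the unit generates. After $p$-localization one replaces $\MU_*\MU$ by $\BP_*\BP$ via Quillen's idempotent.

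The main obstacle is the convergence in Step 1, namely commuting $\tau_{\ge 2*}$ with the totalization. That requires the vanishing of higher cobar cohomology for flat even covers together with completeness of both filtrations. The monoidal identifications in Step 3 I would take essentially as black boxes from the cited works.
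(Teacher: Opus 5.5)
Your proposal is correct and follows the route of the cited source \cite{HRW22}. Steps~1--2 are exactly descent of the even filtration along the evenly faithfully flat map $\mathbb{S}\to\mathrm{MU}$: faithful flatness of $\pi_*B\to\pi_*(B\otimes\mathrm{MU})$ on associated gradeds, plus the fact that $B\mapsto B\otimes\mathrm{MU}^{\otimes k+1}$ is left adjoint to the forgetful functor and hence limit-initial (``cofinal'' in the dual sense you need), while the module-category equivalences are imported from the literature just as the paper does; the only correction is to your Step~3 attributions, since the identification of $p$-complete cellular $\mathbb{C}$-motivic spectra with modules over $(\mathrm{fil}^*_{\mathrm{ev}}\mathbb{S})^\wedge_p$ is due to \cite{GIKR22} (and to \cite{Pst23} via even $\mathrm{MU}$-synthetic spectra), whereas \cite{GWX21} supplies precisely the special-fiber equivalence with $\mathrm{Stab}(\mathrm{CoMod}(\mathrm{BP}_*\mathrm{BP})^{\mathrm{ev}})$ that your Schwede--Shipley sketch for the graded statement is aiming at.
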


\begin{defin}\label{def: grev Moore spectra}
We write $\mathrm{gr}_{\textup{ev}}^*\mathbb{S}/(p,v_1,\cdots ,v_n)$ for the $\mathbb{E}_\infty$ $(\mathrm{gr}_{\textup{ev}}^*\mathbb{S})_p^{\wedge}$-algebra associated to the commutative algebra $\mathrm{BP}_*/(p,v_1,\cdots ,v_n)$ in even $\mathrm{BP}_*\mathrm{BP}$-comodules using \autoref{thm:even-motivic}. Given a  $\mathrm{gr}_{\textup{ev}}^*\mathbb{S}$-module $X$, we write 
\[X/(p,v_1,\cdots ,v_n)=X\otimes_{\mathrm{gr}_{\textup{ev}}^*\mathbb{S}}\mathrm{gr}_{\textup{ev}}^*\mathbb{S}/(p,v_1,\cdots ,v_n)\] using the relative Day convolution symmetric monoidal structure. 
\end{defin}

\begin{defin}\label{def:grading-convention}
Given a graded spectrum $M^{*}$, we say $x\in \pi_nM^w$ has degree $n$, Adams weight $2w-n$ and weight $w$. We write $\|x\|=(n,2w-n)$ and simply $|x|=n$ in this case. We then write $\Sigma^{a,b}M$ for the bigraded suspension that satisfies $\pi_{s}(\Sigma^{a,b}M)^t= \pi_{s-a}M^{t-(a+b)/2}$
so that if $x\in \pi_{s}(\Sigma^{a,b}M)^t$ then it is in stem $s-a$ and Adams weight $2t-s-b$. When $b=0$, we simply write $\Sigma^{a}M$. 
\end{defin}

\begin{remark}
Note that a consequence of \autoref{thm:even-motivic} is that there is an isomorphism 
\[ \pi_n(\grev^w\bS)_p^{\wedge}\cong \Cotor_{(\BP_*,\BP_*\BP)}^{s,2t}(\BP_*\BP_*) \,. \]
where $2t-s=n$ is the degree, $t=w$ is the weight and $s=2w-n$ is the Adams weight. 
\end{remark}

\begin{remark}\label{rem:iterated-cofiber}
By \cite[Theorem~4.3.2]{Rav86} for example, we know that the graded spectrum~$\pi_*\mathrm{gr}_{\textup{ev}}^*\mathbb{S}/(p,v_1,\cdots ,v_n)$ agrees with $\mathbb{F}_p[v_{n+1}]$ in Adams weight zero where $\|v_{n+1}\|=(2p^{n+1}-2,0)$ and consequently we have a periodic self-map 
\[ v_{n+1} : \Sigma^{2p^{n+1}-2}\mathrm{gr}_{\textup{ev}}^*\mathbb{S}/(p,v_1,\cdots ,v_n) \longrightarrow \mathrm{gr}_{\textup{ev}}^*\mathbb{S}/(p,v_1,\cdots ,v_n) \]
so $\mathrm{gr}_{\textup{ev}}^*\mathbb{S}/(p,v_1,\cdots ,v_n)$ could alternatively be constructed as an iterated quotient. 
\end{remark}

\begin{defin}
Suppose that $R$ is an $\bE_1$ $\MU$-algebra. Then we follow \cite[Definition 4.2.1]{HRW22} in defining
\[
\fil_{\mot/\MU}^* \THH(R) := \fil_{\ev/ \THH(\MU)}^* \THH(R)\,,
\]
which is a filtered $\bE_0$-$\gr^*_{\ev}\THH(\MU)$-algebra.  By~\cite[Example~4.2.3,~Corollary~A.2.6]{HRW22}, 
\[
\fil^*_{\mot/\MU} \THH(R) \simeq \mathrm{Tot}\left( \tau_{\ge 2*} \left ( {\THH(R)}\otimes_{\THH(\MU)}{\MU}^{\otimes_{\THH(\MU)}\bullet+1} \right) \right) \,.
\]
\end{defin}

In this particular paper, we will always study motivic filtrations relative to $\MU$.  Thus, we make the following further simplifying convention:

\begin{convention}
Given a $\THH(\MU)$-module $M$, we abbreviate $\fil^*_{\ev/\THH(\MU)}M$ as $\fil^*_{\mot}M$ throughout. Therefore, if $R$ is an $\bE_1$ $\MU$-algebra, we write $\fil^*_{\mot}\THH(R)$ for $\fil^*_{\mot/\MU}\THH(R)$. 
\end{convention}

\begin{warning} \label{warning:MU}
For a general $\bE_1$ $\MU$-algebra $R$, using $\filmot^* \THH(R)$ to denote the filtered spectrum $\fil^*_{\mot/\MU} \THH(R)$ may be in contradiction 
with the notations of \cite{HRW22,Pst}. However, for the particular $R$ we study in this paper, all potential meanings of $\filmot^* \THH(R)$ are equivalent, c.f.~\cite[Example~6.28]{Pst}. 
\end{warning}

\begin{defin}\label{def:quotient of grev MU-modules}
For each $i>0$, we let $\gr_{\ev}^*\MU/v_{i}$ denote the cofiber of $v_{i}$ considered as a self-map of $\gr^*_{\ev} \MU$. For $M$ a $\gr_{\ev}^*\MU$-module, we then define
\[ M/v_{i}:=M\otimes_{\gr_{\ev}^*\MU} (\gr_{\mot}^*\MU/v_{i}).\]
We sometimes write $M/(p,v_1,\cdots ,v_{i})$ for the iterated relative Day convolution 
\[M\otimes_{\gr_{\ev}^*\MU} (\gr_{\mot}^*\MU/p) \otimes_{\gr_{\ev}^*\MU} \cdots \otimes_{\gr_{\ev}^*\MU} (\gr_{\mot}^*\MU/v_i). \]
\end{defin}

\begin{remark}
Via the natural map $\MU \to \THH(\MU)$, any $\gr^*_{\ev} \THH(\MU)$-module is also a $\gr^*_{\ev} \MU$-module.  Thus, if $R$ is an $\bE_1$ $\MU$-algebra, we may refer to
\[ \left  ( \gr^*_{\mot}\THH(R)  \right ) / v_i \qquad \text{ and } \qquad  \left  ( \gr^*_{\mot}\THH(R)  \right ) / (p,v_1,\cdots,v_i) \,. \]
\end{remark}

We observe that the notations in \autoref{def:quotient of grev MU-modules} and \autoref{def: grev Moore spectra} are compatible.
\begin{remark}\label{all-defin-same}
For each $i \ge 1$, our convention that $v_i \in \pi_{2p^i-2} \MU_{(p)}$ is in the image of the unit map 
\[\pi_{2p^i-2}\BP \to \pi_{2p^i-2}\MU_{(p)}\]
ensures that the natural map
\[\gr^*_{\ev} \mathbb{S} / (p,v_1,\cdots,v_{i-1})  \to \gr^*_{\ev} \MU / (p,v_1,\cdots,v_{i-1})\]
sends $v_i$ to $v_i$.  In particular, if $M$ is a $\gr^*_{\ev} \MU$-module, then
\[M \otimes_{\gr^*_{\ev} \mathbb{S}} (\gr^*_{\ev}\mathbb{S}/(p,v_1,\cdots,v_i)) \cong M \otimes_{\gr^*_{\ev} \MU} (\gr^*_{\ev}\MU / (p,v_1,\cdots,v_i))\]
and the notation $M/(p,v_1,\cdots ,v_i)$ is well-defined. 
Additionally, if the Smith--Toda complex $\mathbb{S}/(p,v_1,\cdots ,v_n)$ exists then by \autoref{rem:iterated-cofiber} and \cite[Corollary~1.1.6]{HRW22}, we have 
\[M \otimes_{\gr^*_{\ev} \mathbb{S}} (\gr^*_{\ev/\mathbb{S}}(\mathbb{S}/(p,v_1,\cdots,v_i)) \cong M \otimes_{\gr^*_{\ev} \MU} (\gr^*_{\ev}\MU / (p,v_1,\cdots,v_i))\]
so each of these notations are consistent whenever they are all well-defined. Here we inductively use the fact the map $\Sigma^{2p^i-2}\mathbb{S}/(p,v_1,\cdots ,v_{i-1})\to \mathbb{S}/(p,v_1,\cdots ,v_{i-1})$ induces a monomorphism after applying $\mathrm{MU}_*$  and apply~\cite[Lemma~4.23]{Pst23}. 
\end{remark}

\subsection{Hochschild homology with \texorpdfstring{$\bF_p$}{Fp}-coefficients}\label{sec:HH}

We begin with non-motivic results. First, recall the known computations of $\THH_*(\MU;\mathbb{F}_p)$ and $\THH_*(\BP;\mathbb{F}_p)$. Here and throughout given an $\bE_1$-ring $R$ and an $R\otimes R^{\op}$-module $M$ we write $\THH(R;M):=M\otimes_{R\otimes R^{\op}}R$. Also, given an $\mathbb{E}_\infty$-ring $A$, an $\mathbb{E}_1$ $A$-algebra $R$ and a $R$-bimodule $M$, we define  
\[\THH(R/A;M):=\mathrm{THH}(R;M)\otimes_{\mathrm{THH}(A;M)}M \,. \]

\begin{rec}\label{rec:THHMU}
There is an isomorphism 
\[
\THH_*(\bF_p)=\mathbb{F}_p[\mu]
\]
of graded $\bF_p$-algebras by~\cite{Bok87a}, where $|\mu|=2$, and an isomorphism 
\[ 
\THH_*(\MU;\bF_p)=\mathbb{F}_p\langle \lambda_i^{\prime}: i\ge 1 \rangle
\]
of graded $\bF_p$-algebras,
 where $|\lambda_i^{\prime}|=2i+1$ by \cite[Remark~4.3]{MS93}. 
 We write 
 $\lambda_j\coloneqq\lambda_{p^j-1}^{\prime}$. The $\bE_3$-ring $\THH(\BP;\bF_p)$ is a retract of $\THH(\MU;\bF_p)$ as an $\mathbb{E}_3$-ring by~\cite{BM13} and has homotopy groups
 \[
 \THH_*(\BP;\bF_p) = \mathbb{F}_p\langle \lambda_i: i \ge 1\rangle \,.
 \]
\end{rec}

The following  result is known for $\bE_3$ $\MU$-algebra forms of $\BP\langle n \rangle$, but is new for arbitrary $\bE_{1}$ $\MU$-algebra forms. 
\begin{proposition}\label{prop:Hocschild-MaySSBPn}
As an $\bE_0$ $\THH_*(\BP)$-algebra,
\[
\THH_*(\BP \langle n \rangle;\bF_p)\cong \bF_p \langle \lambda_1,\cdots,\lambda_{n+1}\rangle [\mu^{p^{n+1}}].
\]
Here, the class $\mu^{p^{n+1}}$ denotes a polynomial generator that maps to the similarly named class in $\THH_*(\bF_p)$.
\end{proposition}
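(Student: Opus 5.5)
Since $\BP\langle n\rangle$ is only an $\bE_1$ $\MU$-algebra, $\THH(\BP\langle n\rangle;\bF_p)$ has no ring structure. I will therefore compute it through relative $\THH$ over $\MU$, where the base change is controlled by $\bE_\infty$ objects. First I use the equivalence
\[ \THH(\BP\langle n\rangle;\bF_p)\otimes_{\THH(\MU;\bF_p)}\bF_p\simeq \THH(\BP\langle n\rangle/\MU;\bF_p), \]
together with the fact that $\THH(\BP\langle n\rangle;\bF_p)$ is a module over the $\bE_\infty$-ring $\THH(\MU;\bF_p)$ (since $\THH(\MU)$ is $\bE_\infty$). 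The relative term is
\[ \THH(\BP\langle n\rangle/\MU;\bF_p)\simeq \bF_p\otimes_{\BP\langle n\rangle\otimes_{\MU}\BP\langle n\rangle^{\op}}\BP\langle n\rangle. \]
It can be computed with a Künneth/Tor spectral sequence, because $\pi_*\BP\langle n\rangle=\pi_*\MU_{(p)}/(x_i)$ is a quotient by a regular sequence. That sequence consists of the $v_i$ for $i>n$ and the non-$\BP$ polynomial generators of $\pi_*\MU_{(p)}$. We get $\pi_*(\BP\langle n\rangle\otimes_{\MU}\bF_p)=\bF_p\langle \sigma x_i\rangle$, and then $\pi_*\THH(\BP\langle n\rangle/\MU;\bF_p)=\bF_p[\sigma^2 x_i]$, a divided-power-type algebra concentrated in even degrees. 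Evenness makes the spectral sequences collapse additively. Here $|\sigma^2 v_{n+1}|=2p^{n+1}$, and this class corresponds to $\mu^{p^{n+1}}$.

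Next, I would run the Künneth spectral sequence for $\THH(\BP\langle n\rangle;\bF_p)$ as a $\THH(\MU;\bF_p)$-module, or more cleanly the base-change spectral sequence
\[ \Tor^{\THH_*(\MU;\bF_p)}(\bF_p,\THH_*(\BP\langle n\rangle;\bF_p)). \]
A better-organized alternative is to compare with $\bF_p$: there is a map $\THH(\BP\langle n\rangle;\bF_p)\to\THH(\bF_p)$ of $\THH(\MU;\bF_p)$-modules, and the analogous relative computation for $\bF_p$ gives $\THH(\bF_p/\MU)$ with even homotopy $\bF_p[\sigma^2 p,\sigma^2 v_1,\dots]$-type. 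I would compute the $\bE_3$ case from known results and use it as a guide. The intended outcome is that the classes $\lambda_1,\dots,\lambda_{n+1}$ survive from $\THH_*(\BP;\bF_p)$. These $\lambda_i$ are the images of $\lambda_i$ under the $\bE_0$ unit $\THH(\BP)\to\THH(\BP\langle n\rangle)$. The classes $\lambda_j'$ that are not of the form $\lambda_i$, together with the $\lambda_j$ for $j>n+1$, are instead the targets of transgressions from the $\sigma^2 x_i$. Concretely, $\sigma^2 x_i\mapsto \lambda'$ in degree $|x_i|+1$ cancels the exterior generator in that degree, except for the single class $\sigma^2 v_{n+1}$, whose partner $\lambda_{n+1}$ should survive while $\mu^{p^{n+1}}$ also survives. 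Counting Poincaré series then forces the stated answer.

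To pin down the differentials without multiplicative structure, I would map everything to $\THH(\bF_p)$ and use the fact that $\mu^{p^{n+1}}$ is hit there. Since $\mu^{p^{k}}\in\THH_*(\bF_p)$ comes from $\sigma^2 v_k$ (Bökstedt's computation relative to $\MU$), the generator $\mu^{p^{n+1}}$ exists in $\THH_*(\BP\langle n\rangle;\bF_p)$ by lifting $\sigma^2 v_{n+1}$. Lower powers $\mu^{p^k}$ with $k\le n$ do not lift, because $v_k\neq 0$ in $\BP\langle n\rangle$. The free $\bF_p[\mu^{p^{n+1}}]$-module structure would be verified by a dimension count combined with the $\THH(\MU)$-module action.

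The main obstacle is the lack of a ring structure: it makes both the statement of the polynomial generator and the collapse argument delicate. I would handle it by working entirely in $\THH(\MU;\bF_p)$-modules and using the $\bE_0$ structure only to name classes, so that the answer is formulated additively as a free module over $\THH_*(\BP;\bF_p)$-images.
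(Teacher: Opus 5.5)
There is a genuine gap. It sits at the step where you say that counting Poincar\'e series forces the stated answer.

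The base-change spectral sequence $\Tor^{\THH_*(\MU;\bF_p)}(\bF_p,\THH_*(\BP\langle n\rangle;\bF_p))\Rightarrow\THH_*(\BP\langle n\rangle/\MU;\bF_p)$ has the unknown as its input. The even abutment you computed does not determine that input. For instance, let $N=\bF_p\langle\lambda_1,\dots,\lambda_n\rangle$, with all other $\lambda_i'$ acting by zero. Then $\Tor^{\THH_*(\MU;\bF_p)}(\bF_p,N)$ is a divided power algebra on classes $\sigma\lambda_i'$ of degree $2i+2$, for all $i\ge 1$ with $i\neq p^k-1$, $1\le k\le n$. This is exactly the list of degrees of your classes $\sigma^2x_i$, since the killed generators are the $x_i$ with $i\neq p^k-1$ together with the $v_k$ with $k\ge n+1$. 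So this finite module fits your relative computation just as well as the true answer, in which $\bF_p[\mu^{p^{n+1}}]$ takes the place of the divided powers on $\sigma\lambda_{n+1}$. The candidate $N$ is precisely what you get if $\sigma^2 v_{n+1}$ behaves like every other $\sigma^2x_i$ and ``transgresses'' to kill $\lambda_{n+1}$. Hence the whole content of the proposition is the exception at $v_{n+1}$: that $\lambda_{n+1}\neq 0$ and that $\mu^{p^{n+1}}$ lifts. That is exactly the part you only assert (``should survive'').

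Nothing in your toolkit establishes that exception.
\begin{itemize}
\item Lifting $\sigma^2v_{n+1}$ from $\THH(\BP\langle n\rangle/\MU;\bF_p)$ to $\THH(\BP\langle n\rangle;\bF_p)$ is equivalent to its lying in the image of the edge map of that same spectral sequence. That is what is to be proved, so the argument is circular.
\item The comparison map goes $\THH(\bF_p)\to\THH(\bF_p/\MU)$. So $\mu^{p^k}$ maps to $\sigma^2 v_k$ rather than ``coming from'' it, and no lift is produced.
\item The map $\THH(\BP\langle n\rangle;\bF_p)\to\THH(\bF_p)$ kills all odd classes, so it cannot detect $\lambda_{n+1}$.
\item You have not constructed any action of $\mu^{p^{n+1}}$ on $\THH_*(\BP\langle n\rangle;\bF_p)$ against which to check freeness.
\item The $\bE_3$ case is only a guide; it says nothing by itself about an arbitrary $\bE_1$ form.
\item Running the Koszul-dual $\mathrm{Cotor}$ spectral sequence out of the relative $\THH$ instead would require the coaction on $\sigma^2v_{n+1}$, which is the same missing information.
\end{itemize}

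The paper supplies this input with the Hochschild--May spectral sequence. Its $\mathrm{E}_1$-page $\bF_p\langle\sigma v_0,\dots,\sigma v_n\rangle[\mu]$ is an $\bE_0$-module over the one for $\BP$, because $\BP\langle n\rangle$ is an $\bE_1$ $\BP$-algebra (Chadwick--Mandell) and the Adams filtration is multiplicative. The $\BP$ differentials $d_{2p^i-2}(\mu^{p^i})=\sigma v_i$ transport only for $i\le n$. This leaves $\lambda_i$ detected by $\sigma v_{i-1}\mu^{p^i-p^{i-1}}$ for $1\le i\le n+1$, with $\mu^{p^{n+1}}$ a permanent cycle. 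The exception at $n+1$ is built in: $\lambda_{n+1}$ is made from $\sigma v_n$, which survives because $v_n\neq 0$ in $\BP\langle n\rangle$. Note also that the paper deduces the even Tor statement about relative $\THH$ from this proposition, not the other way around.
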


\begin{proof}
Consider the Hochschild--May spectral sequence~\cite{AKS18,Kee25,LL23} with $\EE_1$-page
\[\THH_*(\bF_p[v_0,\cdots,v_n];\bF_p)\] 
as an $\bE_0$ algebra over the Hochschild--May spectral sequence for $\BP$. Explicitly, we know $\BP \langle n \rangle$ is an $\bE_1$ $\BP$-algebra by Chadwick--Mandell \cite[Corollary~1.3,~Theorem~1.2]{CM15} and consequently, we know that the filtered  spectrum $\mathrm{fil}_{\textup{Ad}}^*\BP \langle n \rangle:=\lim_{\Delta}\tau_{\ge 2*}(\BP \langle n \rangle \otimes \bF_p^{\otimes \bullet+1})$ is an $\bE_1$ $\mathrm{fil}_{\textup{Ad}}^*\BP$-algebra by~\cite{PP} where $\mathrm{fil}_{\textup{Ad}}^*\BP =\lim_{\Delta}\tau_{\ge 2*}(\BP \otimes \bF_p^{\otimes \bullet+1})$. The $\EE_1$-page can be identified with $\bF_p\langle \sigma v_0,\sigma v_1,\cdots ,\sigma v_n\rangle[\mu]$ by~\cite[Lemma 4.1.3]{HW18} as an $\bE_0$ algebra over the $\EE_1$-page $\bF_p\langle \sigma v_0,\sigma v_1,\cdots \rangle$ of the Hochschild--May spectral sequence for $\BP$, see~\cite[Proposition~2.2.2]{AKHW24}. Here the $\bE_0$ algebra structure is given by the canonical quotient map by $\sigma v_j$ for $j\ge n+1$. The bidegrees of the algebra generators $\sigma v_i$ are $(2p^i-1,2p^i-2)$ and the bidegrees of the the module generators $\mu^k$ are $(2k,0)$ where we refer to the first coordinate as the degree and the second coordinate as the Hochschild--May filtration. The $d_r$-differentials decrease degree by one and increase the Hochschild--May filtration by $r$. Since we know the abutment in the case of $\mathrm{BP}$, there are differentials $d_{2p^i-2}(\mu^{p^i})=\sigma v_i$ for each $i\ge 0$ in the Hochschild--May spectral sequence for $\BP$, see~\cite[Proposition~2.2.2]{AKHW24}. The $\bE_0$ algebra structure implies the differentials $d_{2p^i-2}(\mu^{p^i})=\sigma v_i$ and $d_{2p^j-2}(\mu^{p^j})=0$ as well as the differentials generated by the module structure. After considering these differentials as well as bidegrees, we observe that all remaining classes are infinite cycles. Again, we observe that there is no room for hidden $\bE_0$ $\THH_*(\BP;\mathbb{F}_p)$-algebra extensions. The map of Hochschild--May spectral sequences induced by $\BP \langle n \rangle \to \bF_p$ is the canonical quotient by $\sigma v_i$ for $0\le i\le n$ on the $\EE_1$-page and we observe that it is the canonical quotient by $\lambda_i$, detected by $\sigma v_{i-1}\mu^{p^{i}-p^{i-1}}$ in the $\EE_\infty$-page of the spectral sequence, for $1\le i\le n+1$ followed by the canonical inclusion $\bF_p[\mu^{p^n}]\subset \bF_p[\mu]$ and therefore we name classes accordingly. 
\end{proof}

This extends~\cite[Proposition~2.9]{ACH21} to arbitrary $\mathbb{E}_1$-$\MU$-algebra forms of $\BP\langle n\rangle$. 
 
\subsection{The motivic filtration on Hochschild homology}\label{sec:motHH}
The purpose of this section is to compute the homotopy groups of $\grmot^* \THH(\BP \langle n \rangle;\bF_p)$, 
defined using the $\THH(\MU)$-module structure on $\THH(\BP \langle n \rangle;\bF_p)$.
These bigraded homotopy groups form the $\EE_2$-page of the \emph{motivic spectral sequence}
\[
\mathrm{E}_2^{s,2w-s}=\pi_s\grmot^w \THH(\BP \langle n \rangle;\bF_p)\implies \THH_{s}(\BP \langle n \rangle;\bF_p)
\]
converging to the results of the previous subsection by~\cite[Corollary~A.2.9]{HRW22}. Recall that $s$ is called the \emph{degree}, $w$ is called the \emph{weight} and $2w-s$ is called the Adams weight. With the choice of coordinates $(s,2w-s)$ given by (degree, Adams weight), the differential convention of the spectral sequence is the standard Adams convention:
\[ 
	d_r:\mathrm{E}_2^{s,2w-s} \to \mathrm{E}_2^{s-1,2w-s+r}\,.
\]

We will deduce that the motivic spectral sequence 
degenerates at the $\EE_2$-page without extensions. 

\begin{rec}
Recall from \cite[Proposition~2.2.6, Proposition~2.2.9]{AKHW24} that 
\[ \pi_*\grmot^*\THH(\MU)=\bF_p\langle \lambda_i' : i\ge 1\rangle \qquad \text{ and } \qquad \pi_*\grmot^*\THH(\BP)=\bF_p\langle \lambda_j : j\ge 1\rangle 
\]
where $\|\lambda_i'\|=(2i+1,1)$, $\|\lambda_j\|=(2p^j-1,1)$ and using \cite{BM13}, we observe that the $\mathbb{E}_4$ $\MU$-algebra structure on $\BP$ induces an $\mathbb{E}_3$ algebra map 
\[ \pi_*\grmot^*\THH(\MU)\to  \pi_*\grmot^*\THH(\BP)\]
sending $\lambda_i'$ to $\lambda_j$ when $i=p^j-1$ and $0$ otherwise. 
Moreover, by \cite[Proposition~2.2.8]{AKHW24}, 
\[\pi_*\grmot^*\THH(\bF_p)=\bF_p[\mu]
\]
where $\|\mu\|=(2,0)$. 
\end{rec}

We need a preliminary result, which uses the notion of \emph{even flatness} due to 
Pstr\k{a}gowski. 

\begin{defin}[{\cite[Definitions~2.2,~4.4]{Pst23}}]
Let $R$ be an $\mathbb{E}_1$-ring, we say that a left $R$-module is \emph{perfect even} if it belongs to the smallest sub-$\infty$-category of the $\infty$-category of $R$-modules containing $\Sigma^{2k}R$ for all integers $k$ that is closed under extensions of retracts. We say that a left $R$-module is \emph{even flat} if it can be written as a filtered colimit of perfect even $R$-modules. 
\end{defin}

\begin{proposition}\label{algebraic eff map}
The $\THH_*(\MU;\bF_p)$-module $\THH_*(\BP\langle n\rangle;\bF_p)$ is even flat. 
In particular, the spectrum $\THH_*(\BP\langle n\rangle/\MU^{\otimes q+1})_p^{\wedge}$ is a finitely generated free $\mathbb{Z}_p[v_1,\cdots ,v_n]$-module concentrated in even degrees.
\end{proposition}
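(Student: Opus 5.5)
The plan is to prove even flatness by an explicit cell-type resolution over the exterior algebra, and then to deduce the second statement by base change and a Nakayama-type lifting.

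\emph{Reduction to the tensor factor on which $\THH_*(\MU;\bF_p)$ acts trivially.} By \autoref{prop:Hocschild-MaySSBPn} and the recollection preceding it, the $\bE_0$ map $\THH_*(\MU;\bF_p)\to \THH_*(\BP\langle n\rangle;\bF_p)$ does the following:
\begin{itemize}
\item it sends $\lambda'_{p^j-1}\mapsto \lambda_j$ for $1\le j\le n+1$;
\item it kills every other $\lambda'_i$;
\item its target is free over $\bF_p[\mu^{p^{n+1}}]$ with even generators.
\end{itemize}
Write $\THH_*(\MU;\bF_p)=E_{\le n+1}\otimes E_{\rm rest}$, where $E_{\le n+1}=\bF_p\langle \lambda'_{p^j-1}:j\le n+1\rangle$ and $E_{\rm rest}$ is the exterior algebra on the remaining odd classes. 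On homotopy the module then looks like
\[
E_{\le n+1}\otimes \bigl(\bF_p\otimes_{E_{\rm rest}}\bF_p[\mu^{p^{n+1}}]\bigr),
\]
with $E_{\rm rest}$ acting through the augmentation. The first task is to upgrade this to a statement about $\THH(\MU;\bF_p)$-modules. For this I would use the fact that $\THH(\MU;\bF_p)$ is an $\bE_\infty$ $\bF_p$-algebra whose homotopy is free graded-commutative, so it is equivalent to a free $\bE_1$-$\bF_p$-algebra-type tensor product. A module with this homotopy is then determined, up to the filtration needed for even flatness, by its homotopy module. An alternative route is to factor through the $\bE_3$ retraction onto $\THH(\BP;\bF_p)$ from \cite{BM13}.

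\emph{Key lemma: $\bF_p$ is even flat over an exterior algebra on an odd class.} Let $|x|=2k-1$ and $A=\bF_p\langle x\rangle$, viewed as an $\bF_p$-algebra spectrum. The augmentation $A\to\bF_p$ has fiber $\Sigma^{2k-1}\bF_p$. Iterating this gives a sequence of $A$-modules
\[
A=F_0\to F_1\to F_2\to\cdots \to \bF_p,
\]
with $\mathrm{cofib}(F_{m-1}\to F_m)\simeq \Sigma^{2km}A$, an even suspension because $|x|+1=2k$ is even. Each $F_m$ is therefore perfect even, and $\bF_p=\mathrm{colim}_m F_m$ is even flat. Even flatness is preserved under tensoring with free modules, under tensor products over the tensor factors, and under filtered colimits. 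Since $E_{\rm rest}$ has infinitely many generators, I would handle it by taking the filtered colimit over finite subsets of generators. Combined with the first step, this shows that $\THH(\BP\langle n\rangle;\bF_p)$ is a filtered colimit of perfect even $\THH(\MU;\bF_p)$-modules.

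\emph{Consequence.} The module in question is
\[
\THH(\BP\langle n\rangle)\otimes_{\THH(\MU)}\MU^{\otimes_{\THH(\MU)} q+1},
\]
and I would run the argument in three steps.
\begin{itemize}
\item First, reduce mod $(p,v_1,\dots,v_n)$. Since $\BP\langle n\rangle/(p,\dots,v_n)\simeq \bF_p$ as $\MU$-modules, this reduction is computed by $\THH(\BP\langle n\rangle;\bF_p)\otimes_{\THH(\MU;\bF_p)}(\text{even }\THH(\MU;\bF_p)\text{-algebra})$.
\item Second, apply Pstr\k{a}gowski's criterion \cite[Lemma~4.23]{Pst23}: an even flat module tensored with an even algebra is even. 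Together with the explicit homotopy above, this shows the reduction is an even, degreewise finite $\bF_p$-vector space.
\item Third, lift. The sequence $p,v_1,\dots,v_n$ acts regularly, because the homotopy of each successive quotient is even, so the multiplication maps are injective by degree reasons. Then a graded Nakayama argument for $p$-complete, connective, degreewise finite modules over $\bZ_p[v_1,\dots,v_n]$ gives freeness.
\end{itemize}

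\emph{Main obstacle.} The hard step is the first one: promoting the homotopy-level description to a splitting of $\THH(\MU;\bF_p)$-modules, given that only an $\bE_0$ structure is available. My first attempt would be to avoid the splitting altogether. Instead I would filter $\THH(\BP\langle n\rangle;\bF_p)$ by the $\mu^{p^{n+1}}$-adic (Postnikov-type) filtration. Each graded piece is then a cyclic module with homotopy $\bF_p\langle\lambda_1,\dots,\lambda_{n+1}\rangle$, on which module structures are rigid for degree reasons. The key lemma applies to each piece, and even flatness is closed under the resulting extensions and the colimit.
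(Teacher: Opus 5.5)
\textbf{There is a genuine gap, in the even flatness claim itself.} Your treatment of the second sentence matches the paper: reduce modulo $(p,v_1,\dots,v_n)$, note that an even flat module base-changed along an even $\THH(\MU;\bF_p)$-algebra is even, then lift through the $p,v_1,\dots,v_n$-Bocksteins, which collapse for parity reasons. The problem is at exactly the step you flag. \autoref{prop:Hocschild-MaySSBPn} describes only $\pi_*$-modules, and nothing you cite upgrades it to a splitting $E_{\le n+1}\otimes(\bF_p\otimes_{E_{\mathrm{rest}}}\cdots)$ of $\THH(\MU;\bF_p)$-modules. Exterior homotopy does not by itself split $\THH(\MU;\bF_p)$ as a tensor product of $\bE_1$-algebras $\bF_p\langle x\rangle$. ``Determined by its homotopy module'' is not an argument.

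Your fallback also fails, for three reasons.
\begin{itemize}
\item No $\mu^{p^{n+1}}$ self-map of $\THH(\BP\langle n\rangle;\bF_p)$ as a $\THH(\MU;\bF_p)$-module is available at this point. The paper only obtains an action of $\mu^{p^{n+1}}$ later, on the motivic associated graded, via $\tau_{\ge 0}L_n^f\BP$.
\item A Postnikov filtration does not isolate the summands $\bF_p\langle\lambda_1,\dots,\lambda_{n+1}\rangle\mu^{kp^{n+1}}$: for $n\ge 1$ the exterior part reaches degree $\sum_j(2p^j-1)>2p^{n+1}$. Its actual graded pieces include odd suspensions of $\bF_p$, such as $\Sigma^{2p-1}\bF_p$ from $\lambda_1$, and these are not even flat.
\item ``Rigid for degree reasons'' for the hypothetical cyclic pieces is unproved.
\end{itemize}

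\textbf{The missing idea is that no module-level structure is needed.} The paper invokes Pstr\k{a}gowski's criterion \cite[Proposition~4.20, Theorem~4.21]{Pst23}. This reduces even flatness to checking that $\Tor^{\THH_*(\MU;\bF_p)}_*(\THH_*(\BP\langle n\rangle;\bF_p),\bF_p)$ is concentrated in even total degrees. That is a purely algebraic computation from the homotopy description you already wrote down. It is $\bF_p[\mu^{p^{n+1}}]$ tensored with a divided power algebra on classes $\sigma\lambda'_i$, one for each $\lambda'_i$ acting trivially, each in even total degree $2i+2$. Your key lemma, the even cell filtration of $\bF_p$ over $\bF_p\langle x\rangle$, is the one-generator instance of the mechanism behind that criterion. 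The parity of cells needed to build the module is governed by $\pi_*(\THH(\BP\langle n\rangle;\bF_p)\otimes_{\THH(\MU;\bF_p)}\bF_p)$. Its K\"unneth spectral sequence has that $\Tor$ as $E_2$-term and collapses because the $E_2$-term is even. So the homotopy-level data alone suffices, and the splitting you were trying to construct is unnecessary.
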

\begin{proof}
It suffices to check that 
\[
\Tor_{*}^{\THH_{*}(\MU;\bF_p )}(\THH_{*}(\BP\langle n\rangle;\bF_p ),\bF_p )
\] 
is concentrated in even total degrees by~\cite[Proposition~4.20,Theorem~4.21]{Pst23}. 
This follows from \autoref{prop:Hocschild-MaySSBPn}. Consequently, we conclude by a similar argument that 
\[\THH_*(\BP\langle n\rangle /\MU^{\otimes q+1};\bF_p)\] 
is concentrated in even degrees 
and since  $|v_i|=2p^i-2$ the $v_i$-Bockstein spectral sequences for $i=0,1,\cdots, n$ are also concentrated in even degrees. Therefore $\THH(\BP\langle n\rangle /\MU^{\otimes q+1})_p^{\wedge}$ is even and its homotopy groups are a finitely generated free $\mathbb{Z}_p[v_1,\cdots ,v_n]$-module using the collapsing $p$, $v_{1}$-, $\cdots$ $v_{n-1}$- and $v_n$-Bockstein spectral sequences. 
\end{proof}

\begin{defin}\label{May--Ravenel}
Let $(A,\Sigma)$ be a connected graded flat Hopf algebroid $(A,\Sigma)$ and let $M^*$ be a graded $(A,\Sigma)$-comodule with cobar complex 
\[ 
M^*\otimes_{A}\Sigma^{\otimes_A \bullet +1} \,.
\] 
Suppose $(A,\Sigma)$ and $M^*$ are equipped with compatible filtrations as in \cite[Definition~A1.3.7]{Rav86} so that there is a filtered cobar complex 
\[ 
\mathrm{Fil}^*M^*\otimes_{\mathrm{Fil}^*A}{\mathrm{Fil}^*\Sigma}^{\otimes_{\mathrm{Fil}^*A} \bullet +1} 
\] 
with associated graded 
\[ \mathrm{Gr}^*M^*\otimes_{\mathrm{Gr}^*A}{\mathrm{Gr}^*\Sigma}^{\otimes_{\mathrm{Gr}^*A} \bullet +1} 
\] 
then there is a spectral sequence 
\[ 
\EE_{2}^{t-s,t,f} =\Cotor_{(A,\Sigma)}^{s,t}(A,\mathrm{Gr}^fM^*)\implies \Cotor_{(A,\Sigma)}^{s,t}(A,M^*)\]
where 
\[ x\in [\mathrm{Gr}^*M^t\otimes_{\mathrm{Gr}^*A}\mathrm{Gr}^*\Sigma^{\otimes_{\mathrm{Gr}^*A} s +1} ]^{f}\]
is in tri-degree $(t-s,s,f)$ where $t-s$ is the degree, $s$ is the Adams weight and $f$ is the May--Ravenel filtration. 
With this convention, the differential satisfies $|d_r(x)|=(t-s-1,s+1,f+r)$, in other words they decrease degree by one, increase Adams weight by one and increase May--Ravenel filtration by $r$. 
\end{defin}

\begin{notation}
Write $(\bF_p,\Gamma)$ for the flat connected Hopf algebroid $(\bF_p,\bF_p\otimes_{\mathrm{THH}(\MU;\bF_p)}^{\mathbb{L}}\bF_p)$. 
\end{notation}

\begin{remark}\label{cotor-computation}
Note that 
\[\Cotor_{(\bF_p,\Gamma)}^{s,2w}(\bF_p,\bF_p)=\pi_{n}\mathrm{gr}_{\textup{mot}}^w\THH(\MU;\bF_p)\] 
by Koszul duality where $n=2w-s$ and 
\[\pi_{*}\mathrm{gr}_{\textup{mot}}^*\THH(\MU;\bF_p)=\mathbb{F}_p\langle \lambda_1',\lambda_2',\cdots \rangle \,.\]
\end{remark}

\begin{prop}\label{prop:thh-bpn} 
We can identify 
	\[
 \pi_*\gr^{*}_{\mot}\THH(\BP\langle n\rangle ;\bF_p )
	=	\bF_p\langle \lambda_1, \cdots, \lambda_{n+1}\rangle [\mu^{p^{n+1}}]
	\]
as an $\bE_0$ 
$\pi_*\gr^{*}_{\mot}\THH(\MU;\bF_p )$-algebra where the bidegrees of the $\pi_*\gr^{*}_{\mot}\THH(\MU;\bF_p )$-module generators are 
	\begin{align*}
	\|\mu^{sp^{n+1}}\| &= (2p^{n+1}s, 0) 
	\end{align*}
	for $s\ge 0$ where we write $\mu^{sp^{n+1}}$ for a class that has image $\mu^{sp^{n+1}}$ in 
	$\pi_*\gr^{*}_{\mot}\THH(\bF_p)=\bF_p[\mu ] $. 
The $\bE_0$ algebra structure map $\pi_*\grmot^*\THH(\MU;\bF_p)\to \pi_*\grmot^*\THH(\BP\langle n\rangle;\bF_p )$ 
is given by 
\[
		\lambda_{i}^{\prime}\mapsto \begin{cases} \lambda_{j} & \text{ if } i=p^{j}-1, j=1,\cdots ,n+1 \\ 0 & \text{ otherwise} \end{cases}
\]
\end{prop}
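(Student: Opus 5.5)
We compute $\pi_*\grmot^*\THH(\BP\langle n\rangle;\bF_p)$ as a comodule $\Cotor$ over the Hopf algebroid $(\bF_p,\Gamma)$, and then show the motivic spectral sequence to $\THH_*(\BP\langle n\rangle;\bF_p)$ collapses. The first step uses even flatness. By \autoref{algebraic eff map}, $\THH_*(\BP\langle n\rangle;\bF_p)$ is even flat over $\THH_*(\MU;\bF_p)$. By \cite[Theorem~4.21]{Pst23} and the cosimplicial description of $\filmot^*$, the associated graded is then computed by
\[
\pi_{t-s}\grmot^{t/2+\ast}\THH(\BP\langle n\rangle;\bF_p)\cong \Cotor^{s,t}_{(\bF_p,\Gamma)}\bigl(\bF_p,\ \bF_p\otimes^{\mathbb{L}}_{\THH_*(\MU;\bF_p)}\THH_*(\BP\langle n\rangle;\bF_p)\bigr).
\]
Here the comodule is even, concentrated in Adams weight zero, and the degree and weight conventions match \autoref{def:grading-convention}. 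By \autoref{prop:Hocschild-MaySSBPn}, the map $\THH_*(\MU;\bF_p)\to\THH_*(\BP\langle n\rangle;\bF_p)$ kills $\lambda'_i$ for $i\neq p^j-1$ with $j\le n+1$, and sends the remaining $\lambda'_i$ to $\lambda_j$. So the derived tensor product $C:=\bF_p\otimes^{\mathbb{L}}_{\THH_*(\MU;\bF_p)}\THH_*(\BP\langle n\rangle;\bF_p)$ is
\[
C\cong \bF_p[\mu^{p^{n+1}}]\otimes \Gamma_{\bF_p}[\sigma\lambda'_i : i\neq p^j-1,\ 1\le j\le n+1]
\]
as a $\Gamma$-comodule, with divided power generators in even degrees $2i+2$. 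Here $\Gamma\cong \Gamma_{\bF_p}[\sigma\lambda'_i: i\ge1]$ is dual to the exterior algebra, matching \autoref{cotor-computation}.

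The second step computes the $\Cotor$. I would use the change-of-rings identification: $C$ is $\bF_p[\mu^{p^{n+1}}]$ tensored with the quotient Hopf algebra $\Gamma\square_{\Gamma'}\bF_p$ of divided powers on the killed generators, where $\Gamma'=\Gamma_{\bF_p}[\sigma\lambda'_{p^j-1}: j\le n+1]$. Then
\[
\Cotor_\Gamma(\bF_p,C)\cong \bF_p[\mu^{p^{n+1}}]\otimes\Cotor_{\Gamma'}(\bF_p,\bF_p)=\bF_p[\mu^{p^{n+1}}]\otimes\bF_p\langle\lambda_1,\dots,\lambda_{n+1}\rangle,
\]
with $\|\lambda_j\|=(2p^j-1,1)$ and $\|\mu^{sp^{n+1}}\|=(2sp^{n+1},0)$. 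To justify that the comodule structure on $\mu^{sp^{n+1}}$ is trivial (primitive), and to make the change of rings honest, I would use a May--Ravenel filtration (\autoref{May--Ravenel}) by powers of $\mu$. On the associated graded the coaction is the evident one, and the resulting spectral sequence collapses for degree reasons: its $\EE_2$-page is already concentrated in Adams weights matching the target sizes.

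The final step handles the $\bE_0$-module structure and degeneration. The map from $\pi_*\grmot^*\THH(\MU;\bF_p)=\Cotor_\Gamma(\bF_p,\bF_p)$ is induced by the comodule map $\bF_p\to C$, which sends $\lambda'_i$ to $\lambda_j$ or $0$ as claimed. The classes $\mu^{sp^{n+1}}$ are named by mapping to $\pi_*\grmot^*\THH(\bF_p)=\bF_p[\mu]$, via the same computation for $\bF_p$. The identification as an $\bE_0$-algebra then follows. The main obstacle is the first step, namely justifying the $\Cotor$ description with an $\bE_1$ (not $\bE_\infty$) $\MU$-algebra, so that $C$ is only a comodule and not a comodule algebra. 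Even flatness from \autoref{algebraic eff map} is exactly what makes \cite{Pst23} apply to modules. I would also double-check the possible nontrivial coaction on the $\mu$-powers, which a degree argument should exclude since no class of $\Gamma$ has a compatible degree.
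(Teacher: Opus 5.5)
Your outline is the paper's. Even flatness (\autoref{algebraic eff map}) together with \cite[Lemma~2.2.17]{AKHW24} reduces the statement to computing $\Cotor_{(\bF_p,\Gamma)}(\bF_p,C)$, and the paper simply asserts the outcome of that computation. Where you try to supply that computation, one step fails as written.

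You claim a nontrivial coaction on $\mu^{p^{n+1}}$ is excluded because no class of $\Gamma$ has a compatible degree. This is false. The class $\sigma\lambda'_{p^{n+1}-1}\in\Gamma'$ sits in degree exactly $2p^{n+1}$ and is primitive in $\Gamma$. So $\psi(\mu^{p^{n+1}})=1\otimes\mu^{p^{n+1}}+\sigma\lambda'_{p^{n+1}-1}\otimes 1$ is a legitimate coassociative coaction. For it, $\mu^{p^{n+1}}$ is not a cocycle, and $[\sigma\lambda'_{p^{n+1}-1}]$, which represents $\lambda_{n+1}$, is a coboundary, so the answer would change.

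For the same reason, your May--Ravenel spectral sequence for a filtration by powers of $\mu$ does not collapse for degree reasons. The coaction lowers the degree of the comodule factor, so the filtration that is a priori by subcomodules is the one by lower $\mu$-powers. For that filtration, a differential from $\mu^{p^{n+1}}$ in bidegree $(2p^{n+1},0)$ to $\lambda_{n+1}$ in bidegree $(2p^{n+1}-1,1)$ is allowed by degrees. The opposite filtration, for which degrees do force collapse, is only known to be by subcomodules once you already know the answer. There is also no evident $\mu$-adic filtration on $C$ at all, since $\THH(\BP\langle n\rangle;\bF_p)$ has no multiplication. Your assertion that the associated graded coaction is ``the evident one'' therefore has no justification for such a filtration.

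Both points are easy to repair.

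First, primitivity. The classes $\mu^{sp^{n+1}}$ exist in $\THH_*(\BP\langle n\rangle;\bF_p)$ by \autoref{prop:Hocschild-MaySSBPn}. Set $M=\THH(\BP\langle n\rangle;\bF_p)$. Every class in the image of $\pi_*M\to\pi_*(M\otimes_{\THH(\MU;\bF_p)}\bF_p)$ is primitive: the two coface maps into $M\otimes_{\THH(\MU;\bF_p)}\bF_p\otimes_{\THH(\MU;\bF_p)}\bF_p$ agree after precomposing with this map. These images are nonzero because they are detected by $\mu^{sp^{n+1}}$ in $\Tor_0$.

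Second, the rest of the coaction. Replace the $\mu$-adic filtration by the bar (K\"unneth) filtration on $M\otimes_{\THH(\MU;\bF_p)}\bF_p$ and on $\Gamma$.
\begin{itemize}
\item This is a filtration by subcomodules.
\item The K\"unneth spectral sequences collapse, since everything is in even total degree.
\item On associated graded, the coaction is the algebraic one of $\Tor^{\THH_*(\MU;\bF_p)}(\bF_p,\bF_p)$ on $\Tor^{\THH_*(\MU;\bF_p)}(\THH_*(\BP\langle n\rangle;\bF_p),\bF_p)=\bF_p[\mu^{p^{n+1}}]\otimes\Gamma''$.
\end{itemize}
Your change-of-rings computation then gives the $\EE_1$-page $\bF_p[\mu^{p^{n+1}}]\otimes\bF_p\langle\lambda_1,\dots,\lambda_{n+1}\rangle$. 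On this page, bar filtration equals Adams weight, because each $\lambda_j$ is represented by the length-one cobar class $[\sigma\lambda'_{p^j-1}]$.

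A differential raises Adams weight by one and lowers bar filtration, so there are none. Each $\Cotor^s$ is then concentrated in a single filtration, so there are no extension problems. The $\bE_0$-module structure and the comparison with $\THH(\bF_p)$ can then be read off as you describe. Alternatively, this $\EE_1$-page already has the size of the abutment $\THH_*(\BP\langle n\rangle;\bF_p)$ in every degree, which forces both this spectral sequence and the motivic spectral sequence to collapse.
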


\begin{proof}
By \autoref{algebraic eff map} and~\cite[Lemma~2.2.17]{AKHW24}, it suffices to observe that 
\[ 
\Cotor_{(\bF_p,\Gamma)}^{*,*}(\bF_p,\THH_{*}(\mathrm{BP}\langle n\rangle ;\bF_p ) \otimes_{\THH_*(\MU;\bF_p)}^{\mathbb{L}}\bF_p )=\bF_p\langle \lambda_1,\cdots ,\lambda_{n+1}\rangle[\mu^{p^{n+1}}]
\]
as an $\bE_0$ $\Cotor_{(\bF_p,\Gamma)}^{*,*}(\bF_p,\bF_p)$-algebra and this identification is compatible with the map 
\[
\begin{tikzcd}
\Cotor_{(A,\Gamma)}^{*,*}(A,\THH_*(\BP\langle n\rangle;\bF_p)\otimes_{\THH(\mathrm{MU};\bF_p)}^{\mathbb{L}}\bF_p)\arrow{d} \\  
\Cotor_{(A,\Gamma)}^{*,*}(A,\THH_*(\bF_p)\otimes_{\THH(\mathrm{MU};\bF_p)}^{\mathbb{L}}\bF_p)
\end{tikzcd}
\]
of $\bE_0$ $\Cotor_{(\bF_p,\Gamma)}^{*,*}(\bF_p,\bF_p)$-algebras where 
\[ \Cotor_{(A,\Gamma)}^{*,*}(A,\THH_*(\bF_p)\otimes_{\THH(\mathrm{MU};\bF_p)}^{\mathbb{L}}\bF_p)\cong \bF_p [\mu] \,. \qedhere \]
\end{proof}

\begin{prop}\label{prop:agreement-of-quotients-BPn}
There is an equivalence of $\gr^*_{\mot} \THH(\MU)$-modules
\[
\gr^*_{\mot} \THH(\BP\langle n\rangle ;\bF_p)  \simeq \left ( \gr^*_{\mot} \THH(\BP \langle n\rangle ) \right) / (p,v_1,\cdots,v_n)
\]
\end{prop}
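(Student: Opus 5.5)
Both sides are computed by the same cosimplicial formula, so the plan is to compare the two levelwise and then pass to totalizations. By the definition of the motivic filtration relative to $\MU$, $\gr^*_{\mot}$ of a $\THH(\MU)$-module $M$ is the totalization of the graded objects $\pi_{2*}(M\otimes_{\THH(\MU)}\MU^{\otimes_{\THH(\MU)}\bullet+1})[2*]$, provided each term $M\otimes_{\THH(\MU)}\MU^{\otimes_{\THH(\MU)} q+1}$ is even. I would apply this to $M=\THH(\BP\langle n\rangle)$ and to $M=\THH(\BP\langle n\rangle;\bF_p)=\THH(\BP\langle n\rangle)\otimes_{\BP\langle n\rangle}\bF_p$. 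Here $\bF_p$ is regarded as a $\BP\langle n\rangle$-bimodule via the $\MU$-algebra map $\BP\langle n\rangle\to\bF_p$, and it is the quotient of $\BP\langle n\rangle$ by the regular sequence $(p,v_1,\dots,v_n)$ of elements coming from $\MU$.

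\textbf{Step 1 (levelwise identification).} The $q$-th terms are $\THH(\BP\langle n\rangle/\MU^{\otimes q+1})$ and the analogous term with $\bF_p$-coefficients. Since $v_i$ and $p$ come from $\MU$, and hence act centrally through the $\THH(\MU)$-module structure, the second term is the iterated cofiber of $p,v_1,\dots,v_n$ on the first. By \autoref{algebraic eff map}, the $p$-completion of $\THH(\BP\langle n\rangle/\MU^{\otimes q+1})$ is even with homotopy a free $\mathbb{Z}_p[v_1,\dots,v_n]$-module. Therefore $(p,v_1,\dots,v_n)$ is a regular sequence on its homotopy. It follows that the quotient is still even, with $\pi_*$ equal to the quotient of $\pi_*$, and the cofiber sequences for each $v_i$ stay short exact on $\pi_*$. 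Consequently $\tau_{\ge 2*}$, and hence $\gr$ in each cosimplicial degree, commutes with taking the cofiber of $v_i$, up to the bigraded shift that records $v_i$ in Adams weight $0$. This gives a levelwise equivalence
\[ \gr^*\big(\THH(\BP\langle n\rangle/\MU^{\otimes q+1})\big)/(p,v_1,\dots,v_n)\simeq \gr^*\big(\THH(\BP\langle n\rangle/\MU^{\otimes q+1};\bF_p)\big), \]
where the quotient on the left is taken as in \autoref{def:quotient of grev MU-modules} using $\gr_{\ev}^*\MU/v_i$.

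\textbf{Step 2 (totalize).} The quotient by $v_i$ is a finite colimit, namely a tensor product with the perfect $\gr^*_{\ev}\MU$-module $\gr^*_{\ev}\MU/v_i$, so it commutes with $\mathrm{Tot}$. Evenness of all terms makes both totalizations compute the respective $\gr^*_{\mot}$. We therefore obtain the equivalence of $\gr^*_{\mot}\THH(\MU)$-modules, naturally in the cosimplicial structure, since all maps are $\THH(\MU)$-linear. To handle $p$-completion, note that everything is $p$-complete after quotienting by $p$, so the completion in \autoref{algebraic eff map} is harmless.

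\textbf{Main obstacle.} The delicate point is the identification in Step 1 of $\tau_{\ge 2*}$ of the cofiber with the cofiber of $\tau_{\ge 2*}$. This needs evenness of the quotients at every stage of the iterated cofiber, not only at the end. It also needs the $\gr_{\ev}^*\MU$-module structure used to define $/v_i$ to agree with the one induced by the $\THH(\MU)$-action. The first point is exactly the inductive Bockstein argument in the proof of \autoref{algebraic eff map}. The second follows from \autoref{all-defin-same} and the fact that the map $\MU\to\THH(\MU)$ is compatible with the cosimplicial resolution.
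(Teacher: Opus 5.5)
Your proposal is correct and follows essentially the same route as the paper: the evenness and $\mathbb{Z}_p[v_1,\dots,v_n]$-freeness from \autoref{algebraic eff map} make each cofiber sequence for $p,v_1,\dots,v_n$ short exact on homotopy in every cosimplicial degree, so it survives $\tau_{\ge 2*}$ and totalization, and a finite induction gives the claim (the paper runs the same induction via $\THH(\BP\langle n\rangle;B_i)$ with intermediate coefficients $B_i$). One small slip: for a merely $\bE_1$ ring, $\THH(\BP\langle n\rangle)$ is not a $\BP\langle n\rangle$-module, so $\THH(\BP\langle n\rangle)\otimes_{\BP\langle n\rangle}\bF_p$ is not meaningful; replace it with the identification you actually use, namely $\bF_p\simeq\BP\langle n\rangle/(p,v_1,\dots,v_n)$ as bimodules, so that $\THH(\BP\langle n\rangle;\bF_p)$ is the iterated cofiber of $p,v_1,\dots,v_n$ on $\THH(\BP\langle n\rangle)$ in $\THH(\MU)$-modules.
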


\begin{proof}
By \autoref{algebraic eff map}, we know that $\THH_*(\BP \langle n\rangle/\mathrm{MU}^{\otimes q+1})_p^{\wedge}$ is a finitely generated free $\mathbb{Z}_p^{\wedge}[v_{1},v_{2},\cdots ,v_{n}]$-module concentrated in even degrees. 
Letting $B_i=\BP\langle i\rangle$ for $-1\le i\le n$, we therefore have cofiber sequences 
\[ 
\grmot^*(\Sigma^{2p^i-2}\THH(\BP\langle n\rangle;B_{i-1}))\to \grmot^*\THH(\BP\langle n\rangle;B_{i-1})\to \grmot^{*}\THH(\BP\langle n\rangle;B_i)
\]
induced by the short exact sequences
\[ 
0\to \pi_*\Sigma^{2p^i-2}\THH(\BP\langle n\rangle;B_{i-1})\to \pi_*\THH(\BP\langle n\rangle;B_{i-1})\to \pi_*\THH(\BP\langle n\rangle;B_{i}) \to 0 \,.
\]
We can therefore identify
\[
\grmot^{*}\THH(\BP\langle n\rangle;B_i)\simeq (\grmot^{*}\THH(\BP\langle n\rangle;B_{i-1}))/(v_i)
\]
for each $0\le i\le n$ and after a finite induction we have proven the claim. 
\end{proof}

Note that the commutative diagram of $\MU$-modules 
\[
\begin{tikzcd}
\MU \ar[r] \ar[d] & \THH(\MU) \ar[d] \\ 
\BP\langle n\rangle \ar[r] \ar[d]  & \THH(\BP\langle n\rangle) \ar[d] \\
\bF_p \ar[r] \ar[dr]  & \THH(\BP\langle n\rangle;\mathbb{F}_p) \ar[d] \\
 & \THH(\bF_p) 
\end{tikzcd}
\]
implies that the class $v_{n+1}\in \pi_*\grmot^*\MU$ acts trivially on $\grmot^*\THH(\BP\langle n\rangle;\mathbb{F}_p)$ compatibly with the map $\THH(\BP\langle n\rangle;\mathbb{F}_p)\to \THH(\bF_p)$. 

\begin{defin}\label{def-v_n}
Consider the short exact sequence 
\[ 0 \to M \to  M\langle \varepsilon_{n+1}\rangle \to \Sigma^{2p^{n+1}-1,-1}M \to 0 
\]
where $M=\gr^*_{\mot} \THH(\BP\langle n\rangle ;\bF_p)$ induced by the cofiber sequence 
\[ \Sigma^{2p^{n+1}-2}\gr_{\ev}^*\MU \overset{v_{n+1}}{\longrightarrow} \gr_{\ev}^*\MU \to \gr_{\ev}^*\MU/v_{n+1}  \]
by base-change and using the observation that $v_{n+1}$ acts trivially on $M$. 
Then the class $\varepsilon_{n+1}$ is defined as a lift of $1\in \Sigma^{2p^{n+1}-1,-1}M$. The choice of lift $\varepsilon_{n+1}$ is unique (up to a unit) by \autoref{prop:thh-bpn}. 
\end{defin}

\begin{thm}\label{thm:map-to-fp-from-bpn}
There is an isomorphism 
\[ 
\pi_*((\grmot^*\THH(\BP\langle n\rangle))/(p,v_1,\cdots,v_{n+1}))\cong \bF_p\langle \lambda_{1},\cdots ,\lambda_{n+1}\rangle[\mu^{p^{n+1}}]\langle \varepsilon_{n+1}\rangle
\]
of $\bF_p\langle \lambda_1,\lambda_2,\cdots ,\lambda_{n+1}\rangle$-modules and the canonical map 
\[ 
\pi_*((\grmot^*\THH(\BP\langle n\rangle))/(p,v_1,\cdots,v_{n+1}))\to \pi_*((\grmot^*\THH(\bF_p))/(p,v_1,\cdots ,v_{n+1}))
\]
of $S^1$-equivariant 
$\bE_0$ $\pi_*((\grmot^*\THH(\MU))/(p,v_1,\cdots ,v_n))$-algebras 
is injective modulo $(\lambda_1,\cdots ,\lambda_{n+1})$ with image exactly $\bF_p[\mu^{p^{n+1}}]\langle \varepsilon_{n+1}\rangle$. Here the bidegree of $\varepsilon_{n+1}$ is 
\[ 
\| \varepsilon_{n+1}\|=(2p^{n+1}-1,-1) \,.
\]
\end{thm}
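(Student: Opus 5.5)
Write $M=\grmot^*\THH(\BP\langle n\rangle;\bF_p)$. The plan is to reduce everything to $M$ and the cofiber sequence of \autoref{def-v_n}.

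\textbf{Step 1: identify the source.} By \autoref{prop:agreement-of-quotients-BPn},
\[
(\grmot^*\THH(\BP\langle n\rangle))/(p,v_1,\cdots,v_{n+1}) \simeq M/v_{n+1} .
\]
Base-changing the cofiber sequence $\Sigma^{2p^{n+1}-2}\gr_{\ev}^*\MU\xrightarrow{v_{n+1}}\gr_{\ev}^*\MU\to \gr_{\ev}^*\MU/v_{n+1}$ along $M$ gives a long exact sequence in homotopy. The connecting map is multiplication by $v_{n+1}$ on $\pi_*M$. It vanishes because of the commutative diagram preceding \autoref{def-v_n}: $v_{n+1}$ maps to zero in $\pi_*\BP\langle n\rangle$, hence acts by zero on $\THH(\BP\langle n\rangle;\bF_p)$ and on its motivic associated graded. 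So we get the short exact sequence
\[
0\to \pi_*M\to \pi_*(M/v_{n+1})\to \pi_*\Sigma^{2p^{n+1}-1,-1}M\to 0
\]
of $\pi_*\grmot^*\THH(\MU)$-modules. By \autoref{prop:thh-bpn}, $\pi_*M=\bF_p\langle\lambda_1,\cdots,\lambda_{n+1}\rangle[\mu^{p^{n+1}}]$, which is free over $\bF_p\langle\lambda_1,\cdots,\lambda_{n+1}\rangle$. Hence the sequence splits as modules over this exterior algebra once a lift $\varepsilon_{n+1}$ of $1$ is chosen. This yields the claimed isomorphism, with $\|\varepsilon_{n+1}\|=(2p^{n+1}-1,-1)$ read off from the bigraded suspension. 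Bidegree considerations should show the lift is unique up to unit and up to classes of $M$ in that bidegree. Since $p^{n+1}$ does not divide $p^{n+1}-\tfrac12$, no $\mu$-power has degree $2p^{n+1}-1$, so I expect none.

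\textbf{Step 2: compute the target.} Do the same for $\grmot^*\THH(\bF_p)$. Its homotopy is $\bF_p[\mu]$ by the recollection, and $p,v_1,\cdots,v_{n+1}$ all act by zero. Iterating the cofiber sequences gives
\[
\pi_*\big((\grmot^*\THH(\bF_p))/(p,v_1,\cdots,v_{n+1})\big)\cong \bF_p[\mu]\langle \varepsilon_0,\cdots,\varepsilon_{n+1}\rangle,
\]
with $\varepsilon_{n+1}$ the analogous lift. The map $\BP\langle n\rangle\to\bF_p$ is compatible with these sequences, and the quotients by $p,\dots,v_n$ on the source side match via \autoref{prop:agreement-of-quotients-BPn}. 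Therefore the induced map is a map of short exact sequences for the final $v_{n+1}$-cofiber.

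\textbf{Step 3: read off the map modulo $(\lambda_i)$.} On the sub term, the map $\pi_*M\to\bF_p[\mu]$ is known from \autoref{prop:thh-bpn}: it kills the $\lambda_i$ and sends $\mu^{sp^{n+1}}\mapsto\mu^{sp^{n+1}}$. On the quotient term it is the same map, suspended. Naturality of the lifts $\varepsilon_{n+1}$ then gives, modulo $(\lambda_1,\cdots,\lambda_{n+1})$, an injection onto $\bF_p[\mu^{p^{n+1}}]\langle\varepsilon_{n+1}\rangle$.

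\textbf{Main obstacle.} The delicate point is that $\varepsilon_{n+1}$ in the source maps exactly to $\varepsilon_{n+1}$ in the target, and not to $\varepsilon_{n+1}$ plus a correction. A priori it is determined only up to the indeterminacy of the lift. I would handle this by a bidegree check in the target: any correction would be a class of $\bF_p[\mu]\langle\varepsilon_0,\cdots,\varepsilon_n\rangle$ of bidegree $(2p^{n+1}-1,-1)$. If such classes exist (e.g. products of lower $\varepsilon_i$ with $\mu$-powers), I would rechoose the target $\varepsilon_{n+1}$ as the image of the source class. That choice is legitimate because the image is still a lift of $1$.

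The $S^1$-equivariance and $\bE_0$-algebra compatibility are automatic, since all constructions are base changes along $\gr_{\ev}^*\MU$-modules of an $S^1$-equivariant $\bE_0$ map.
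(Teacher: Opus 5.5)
Your proposal is correct and follows essentially the paper's argument. Both identify the mod $(p,v_1,\dots,v_n)$ quotient with $\grmot^*\THH(\BP\langle n\rangle;\bF_p)$ via \autoref{prop:agreement-of-quotients-BPn}, use that its map to $\grmot^*\THH(\bF_p)$ has image $\bF_p[\mu^{p^{n+1}}]$, and pass to the $v_{n+1}$-quotient through the short exact sequence of \autoref{def-v_n}. You make the lift indeterminacy explicit: the source lift is unique because $\pi_*\grmot^*\THH(\BP\langle n\rangle;\bF_p)$ vanishes in Adams weight $-1$, and you rechoose the target $\varepsilon_{n+1}$. The paper absorbs this by routing the map through $\pi_*\grmot^*\THH(\bF_p)$ before quotienting by $v_{n+1}$.
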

\begin{proof}
The canonical map 
\[ 
\pi_*((\grmot^*\THH(\BP \langle n \rangle))/(p,v_1,\cdots,v_n))\to \pi_*((\grmot^*\THH(\bF_p))/(p,v_1,\cdots ,v_n))
\]
factors through the map
\[
\pi_*((\grmot^*\THH(\BP \langle n \rangle))/(p,v_1,\cdots,v_n))\to \pi_*\grmot^*\THH(\bF_p)
\]
with image $\bF_p[\mu^{p^{n+1}}]$ by \autoref{prop:agreement-of-quotients-BPn} and \autoref{prop:Hocschild-MaySSBPn}. This determines the image of the map 
\[ \pi_*((\grmot^*\THH(\BP\langle n\rangle))/(p,v_1,\cdots,v_n))\to \pi_*((\grmot^*\THH(\bF_p))/(p,v_1,\cdots ,v_n))
\]
as well. Taking a further quotient by $v_{n+1}$ produces the result by \autoref{def-v_n} and the discussion preceding it. 
\end{proof}

  \section{Hodge--Tate Cohomology}
First, we recall the definition of Hodge--Tate cohomology in our context. 
\begin{defin}\label{def:tCp-motivic}
Given an $\mathbb{E}_1$-$\mathrm{MU}$-algebra $R$ such that $\mathrm{THH}(R/\mathrm{MU}^{\otimes q+1})_p^{\wedge}$ is even and torsion free for each integer $q\ge 0$, we define 
\[\fil^*_{\mot} \THH(R)^{tC_p} =\lim_{\Delta }\tau_{\ge 2*} (\mathrm{THH}(R/\mathrm{MU}^{\otimes \bullet+1})^{tC_p}) \,.\]
We call the associated graded 
\[\gr^*_{\mot} \THH(R)^{tC_p}:= \fil^*_{\mot} \THH(R)^{tC_p}/\fil^{*+1}_{\mot} \THH(R)^{tC_p} \]
the \emph{Hodge--Tate cohomology} of $R$. 
\end{defin}
\begin{remark}
By \autoref{algebraic eff map}, we know that $R$ satisfies the hypotheses of~\autoref{def:tCp-motivic}. By~\cite[Example~,~Corollary~A.2.11]{HRW22}, the filtration from \autoref{def:tCp-motivic} is compatible with the motivic filtration
\[\fil^*_{\mot} \THH(R)\simeq \lim_{\Delta }\tau_{\ge 2*} \mathrm{THH}(R/\mathrm{MU}^{\otimes \bullet+1})\,.\]
The assumption that  $\mathrm{THH}(R/\mathrm{MU}^{\otimes q+1})_p^{\wedge}$ is even and torsion free is used to determine that the spectrum $\mathrm{THH}(R/\mathrm{MU}^{\otimes q+1})^{tC_p}$ is even for each integer $q\ge 0$. 
\end{remark}

The main theorem of this section is the following calculation of mod $(p,v_1,\cdots,v_{n+1})$ Hodge--Tate cohomology:

\begin{thm}\label{thm:Hodge-Tate}
The commutative diagram
\[
\begin{tikzcd}
\pi_*((\gr^*_{\mot} \THH(\BP\langle n\rangle)) / (p,v_1,\cdots,v_{n+1})) \arrow{r}{\varphi_p} \arrow{d} & \pi_*((\gr^*_{\mot} \THH(\BP \langle n\rangle)^{tC_p} )/ (p,v_1,\cdots,v_{n+1})) \arrow{d} \\ 
\pi_*((\gr^*_{\mot}\THH(\bF_p) )/ (p,v_1,\cdots,v_{n+1})) \arrow{r}{\varphi_p} & \pi_*((\gr^*_{\mot} \THH(\bF_p)^{tC_p} )/ (p,v_1,\cdots,v_{n+1}) )\,,
\end{tikzcd}
\]
where the vertical maps are induced by the reduction map $\BP\langle n\rangle \to \bF_p$ and the horizontal maps are the Frobenius maps, 
may be identified as $\bF_p\langle \lambda_1,\lambda_2,\cdots, \lambda_{n+1}\rangle$-modules with the square
\[
\begin{tikzcd}
\bF_p\langle\lambda_1,\lambda_2,\cdots,\lambda_{n+1}\rangle[\mu^{p^{n+1}}]\langle \varepsilon_{n+1}\rangle  \arrow{r} \arrow{d} & \bF_p\langle \lambda_1,\lambda_2,\cdots ,\lambda_{n+1}\rangle
[\mu^{\pm p^{n+1}}]\langle \varepsilon_{n+1}\rangle \arrow{d} \\
\bF_p[\mu]\langle \epsilon_0,\epsilon_1,\cdots,\epsilon_{n+1}\rangle \arrow{r} & \bF_p[\mu^{\pm 1}]\langle \epsilon_0,\epsilon_1,\cdots,\epsilon_{n+1}\rangle 
\end{tikzcd}
\]
where the horizontal maps are given by inverting $\mu^{p^{n+1}}$ and $\mu$ respectively and the vertical maps are the tensor product of the canonical quotient $\bF_p\langle\lambda_1,\lambda_2,\cdots,\lambda_{n+1}\rangle\to \bF_p$ with the canonical inclusion. 
\end{thm}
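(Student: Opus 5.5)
The plan is to first settle the bottom row, then compute the top-right corner by a Segal-conjecture type argument, and finally compare the two.

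\textbf{Bottom row.} I will take $\pi_*\grmot^*\THH(\bF_p)$ and $\pi_*\grmot^*\THH(\bF_p)^{tC_p}$ from \cite{AKHW24} (compare \cite{HRW22}). Reducing these modulo $(p,v_1,\dots,v_{n+1})$ adds one exterior class $\epsilon_i$ for each $v_i$, $0\le i\le n+1$. This works because each $v_i$ acts trivially on $\grmot^*\THH(\bF_p)$, by the same reasoning as in \autoref{def-v_n}. The Frobenius $\varphi_p$ on $\grmot^*\THH(\bF_p)$ is the Segal/Tate-type map $\bF_p[\mu]\to\bF_p[\mu^{\pm1}]$, which is the localization inverting $\mu$. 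This identifies the bottom horizontal arrow.

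\textbf{Top-right corner.} The left column is already \autoref{thm:map-to-fp-from-bpn}. For the right column I will work level-wise in the cosimplicial object of \autoref{def:tCp-motivic}. By \autoref{algebraic eff map}, each $\THH(\BP\langle n\rangle/\MU^{\otimes q+1})^{\wedge}_p$ is even and free over $\mathbb{Z}_p[v_1,\dots,v_n]$. Hence the Tate construction is computed by the Tate spectral sequence, and $\tau_{\ge 2*}$ commutes with reduction mod $(p,v_1,\dots,v_n)$ level-wise. This is the same argument as in \autoref{prop:agreement-of-quotients-BPn}.

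Next I will show that the Frobenius
\[\gr_{\mot}^*\THH(\BP\langle n\rangle)/(p,\dots,v_{n+1})\to \gr^*_{\mot}\THH(\BP\langle n\rangle)^{tC_p}/(p,\dots,v_{n+1})\]
becomes an equivalence after inverting $\mu^{p^{n+1}}$. The route is to compare with the Segal conjecture for $\BP\langle n\rangle$ in the Hahn--Wilson form. Their argument runs through the descent/Tate spectral sequence of the motivic filtration and uses only the $\bE_0$ $\THH(\MU)$-module structure together with the computation in \autoref{prop:thh-bpn}. In weight terms, the source is free over $\bF_p[\mu^{p^{n+1}}]$ on the classes of $\bF_p\langle\lambda_1,\dots,\lambda_{n+1}\rangle\langle\varepsilon_{n+1}\rangle$. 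The Frobenius is a map of modules over the Frobenius of $\THH(\MU)$, which is an equivalence after inverting the relevant class by \cite{AKHW24}. I would therefore argue that it is $\mu^{p^{n+1}}$-local injective and that its target is $\mu^{p^{n+1}}$-periodic. Both properties would be detected after mapping to $\bF_p$ via the right vertical map.

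\textbf{Comparing the columns.} Naturality of $\varphi_p$ in the $\bE_1$ $\MU$-algebra map $\BP\langle n\rangle\to\bF_p$ gives the commutative square. The image computation for the right vertical map then follows from the left one (\autoref{thm:map-to-fp-from-bpn}) after inverting $\mu^{p^{n+1}}$. Degree bookkeeping shows that $\varepsilon_{n+1}$ maps to $\epsilon_{n+1}$ (mod lower terms), and that $\lambda_i$ maps to zero modulo $\lambda$'s.

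\textbf{Main obstacle.} The hard step is the Segal-type statement for the top row with only $\bE_1$ structure. There is no multiplicativity with which to propagate periodicity, so I would use module structure over $\gr_{\mot}\THH(\MU)^{tC_p}$ and over the Frobenius of $\THH(\MU)$ instead. The first attempt would be to check the claim on a filtration by powers of $\lambda$'s: its associated graded reduces to the $\bF_p$ case, and a five-lemma induction would finish it.
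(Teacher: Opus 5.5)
\textbf{Verdict: genuine gap.} Your bottom row, your left column via \autoref{thm:map-to-fp-from-bpn}, and commutativity by naturality are fine. The gap is the step you yourself flag as the main obstacle: identifying the top Frobenius, and with it the top-right corner, with the localization inverting $\mu^{p^{n+1}}$. The proposal does not supply this, and the mechanisms you suggest would not work.

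\begin{itemize}
\item You cannot quote Hahn--Wilson here. Their Segal-conjecture argument is written for $\mathbb{E}_3$ forms and uses the resulting multiplicativity. The present statement is exactly its extension to $\mathbb{E}_1$ forms.
\item The $\mathrm{THH}(\mathrm{MU})$-module structure cannot help. $\pi_*\mathrm{gr}^*_{\mathrm{mot}}\mathrm{THH}(\mathrm{MU};\mathbb{F}_p)$ is exterior on the $\lambda_i'$, contains nothing mapping to $\mu^{p^{n+1}}$, and its Frobenius is already an equivalence, so there is no ``relevant class'' to invert. Without a ring acting through a class that maps to $\mu^{p^{n+1}}$, the phrases ``$\mu^{p^{n+1}}$-local injective'' and ``$\mu^{p^{n+1}}$-periodic target'' have no meaning for an $\mathbb{E}_0$ object.
\item Detection through the right vertical map fails, because that map kills every $\lambda_i$.
\item The $\lambda$-adic fallback is circular. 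Its graded pieces are base changes along $\mathrm{THH}(\mathrm{MU})\to\mathrm{MU}$, i.e.\ relative $\mathrm{THH}$, which carries no cyclotomic Frobenius. On the target side, those pieces are the very Hodge--Tate groups you are trying to compute.
\item Level-wise evenness of $\mathrm{THH}(\mathrm{BP}\langle n\rangle/\mathrm{MU}^{\otimes q+1})^{tC_p}$ gives you the filtration, not the Frobenius on it.
\end{itemize}

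The paper fills this gap with three ingredients.
\begin{itemize}
\item \emph{A genuine $\mu^{p^{n+1}}$-action.} Since $v_{n+1}$ dies in $L_n^f\mathrm{BP}$ and $\mathrm{BP}\langle n\rangle\simeq\tau_{\ge 0}L_n^f\mathrm{BP}\langle n\rangle$, the spectrum $\mathrm{THH}(\mathrm{BP}\langle n\rangle)$ is a module over the ring $\mathrm{THH}(\tau_{\ge 0}L_n^f\mathrm{BP})$. In that ring $\sigma^2 v_{n+1}$ exists and maps to $\mu^{p^{n+1}}\in\pi_{2p^{n+1}}\mathrm{THH}(\mathbb{F}_p/\mathrm{MU})$. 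This is what makes ``inverting $\mu^{p^{n+1}}$'' meaningful with only $\mathbb{E}_1$ structure.
\item \emph{The non-motivic Frobenius} $\mathrm{THH}(\mathrm{BP}\langle n\rangle;\mathbb{F}_p)\to\mathrm{THH}(\mathrm{BP}\langle n\rangle;\mathbb{F}_p^{\otimes p})^{tC_p}$ (\autoref{Hodge-Tate-nonmotivic}). It is computed via the $\mathbb{F}_p$-Adams filtration, whose associated graded is $\mathbb{F}_p\otimes\mathbb{S}[v_0,\dots,v_n]$. Hahn--Wilson's result that the Frobenius on $\mathrm{THH}(\mathbb{S}[x])$ is $p$-completion, together with $\mathrm{THH}_*(\mathbb{F}_p)\to\mathrm{THH}_*(\mathbb{F}_p)^{tC_p}$ inverting $\mu$, shows the map of $E_2$-pages inverts $\mu$. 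The source spectral sequence is known from \autoref{prop:Hocschild-MaySSBPn}, and everything except powers of $\mu$ is a permanent cycle. Hence the map on every page is a localization, in particular injective, and this pins down the target.
\item \emph{Passage to the motivic statement.} The paper compares cobar complexes over $\mathrm{THH}(\mathrm{MU};\mathbb{F}_p)$. The Lun\o e-Nielsen--Rognes equivalences $\mathrm{THH}(\mathrm{MU};\mathbb{F}_p)\simeq\mathrm{THH}(\mathrm{MU};\mathbb{F}_p^{\otimes p})^{tC_p}$ and $\mathbb{F}_p\simeq(\mathbb{F}_p^{\otimes p})^{tC_p}$ put both sides over the same Hopf algebroid. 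It then uses even flatness of the localized module and a convergent May--Ravenel spectral sequence whose $E_2$-page already equals the abutment. Only after that does it reduce mod $v_{n+1}$.
\end{itemize}
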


First, we prove the result before passing to the motivic filtration. Here we use the map 
\[ \THH(R ;M)  \to \THH(R ;M^{\otimes p})^{tC_p} \] 
from~\cite[Theorem~6.3.1]{KMN23}, which is functorial in pairs $(R,M)$ where $R$ is an $\mathbb{E}_1$-ring and $M$ is an $R$-bimodule and exact by~\cite[Theorem~1.7]{NS18}. 
\begin{proposition}\label{Hodge-Tate-nonmotivic}
The commutative diagram
\[
\begin{tikzcd}
\pi_*\THH(\BP\langle n\rangle ;\bF_p)  \arrow{r}{\varphi_p} \arrow{d} & \pi_*\gr^*_{\mot} \THH(\BP \langle n\rangle;\bF_p^{\otimes p})^{tC_p} \arrow{d} \\ 
\pi_*\THH(\bF_p) \arrow{r}{\varphi_p} & \pi_*\gr^*_{\mot} \THH(\bF_p)^{tC_p} \,,
\end{tikzcd}
\]
where the vertical maps are induced by the reduction map $\BP\langle n\rangle \to \bF_p$ and the horizontal maps are the Frobenius maps, 
may be identified as $\bF_p\langle \lambda_1,\lambda_2,\cdots, \lambda_{n+1}\rangle$-modules with the square
\[
\begin{tikzcd}
\bF_p\langle\lambda_1,\lambda_2,\cdots,\lambda_{n+1}\rangle[\mu^{p^{n+1}}] \arrow{r} \arrow{d} & \bF_p\langle \lambda_1,\lambda_2,\cdots ,\lambda_{n+1}\rangle
[\mu^{\pm p^{n+1}}]\arrow{d} \\
\bF_p[\mu] \arrow{r} & \bF_p[\mu^{\pm 1}]
\end{tikzcd}
\]
where the horizontal maps are given by inverting $\mu^{p^{n+1}}$ and $\mu$ respectively and the vertical maps are the tensor product of the canonical quotient $\bF_p\langle\lambda_1,\lambda_2,\cdots,\lambda_{n+1}\rangle\to \bF_p$ with the canonical inclusion. 
\end{proposition}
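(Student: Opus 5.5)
The plan is to reduce everything to the bottom row, which is the classical Segal conjecture for $\THH(\bF_p)$, and then exploit module structures. By \cite{BMS19} (or the computation of \cite{NS18} for $\bF_p$), the Frobenius $\varphi_p\colon \THH(\bF_p)\to \THH(\bF_p)^{tC_p}$ induces on homotopy the map $\bF_p[\mu]\to \bF_p[\mu^{\pm1}]$ inverting $\mu$. Since $\bF_p$ is an $\bF_p$-bimodule with $\bF_p^{\otimes p}\otimes_{\BP\langle n\rangle^{\otimes p}}(\text{--})$ reducing to $\THH(\bF_p)^{tC_p}$ along $\BP\langle n\rangle\to\bF_p$, the naturality of the map of \cite[Theorem~6.3.1]{KMN23} in pairs $(R,M)$ gives the commutative square. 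By \autoref{prop:Hocschild-MaySSBPn} the left vertical map is the quotient by $(\lambda_1,\dots,\lambda_{n+1})$ followed by $\bF_p[\mu^{p^{n+1}}]\subset \bF_p[\mu]$, so only the top row and right vertical map need to be identified.

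Next I compute $\pi_*\THH(\BP\langle n\rangle;\bF_p^{\otimes p})^{tC_p}$. Using the Tate-diagonal form $\THH(R;M^{\otimes p})^{tC_p}\simeq (M^{\otimes_R p}\otimes_{R\otimes R^{\op}} R)^{tC_p}$-type description, with $M=\bF_p$ and $R=\BP\langle n\rangle$, the underlying spectrum is a cyclic tensor $\bF_p\otimes_R\bF_p\otimes_R\cdots\otimes_R\bF_p$ (cyclically closed) with $C_p$ permuting factors. I would run the Tate spectral sequence $\hat H^{-*}(C_p;\pi_*(\cdot))\Rightarrow\pi_*(\cdot)^{tC_p}$; the non-free summands of the $C_p$-action on the homotopy are only the diagonal ones, which contribute $\THH_*(\BP\langle n\rangle;\bF_p)\otimes \hat H^{*}(C_p;\bF_p)$ Frobenius-twisted, and one expects (as in the Segal conjecture for $\THH$ of connective rings with polynomial homotopy) the answer $\bF_p\langle\lambda_1,\dots,\lambda_{n+1}\rangle[\mu^{\pm p^{n+1}}]$, with the periodicity class in $\hat H^{-2}(C_p)$ detecting $\mu^{p^{n+1}}$'s image after the relevant shift. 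A cleaner alternative I would try first: $\varphi_p$ is a map of modules over $\THH(\BP\langle n\rangle)$ (through its own Frobenius) and over $\THH(\MU)$ via $\varphi_p$ on $\THH(\MU)$, which is an equivalence after $p$-completion in high degrees; since $\THH(\BP\langle n\rangle;\bF_p)$ is built from $\THH(\MU;\bF_p)$-cells in even degrees by \autoref{algebraic eff map}, exactness of both sides \cite[Theorem~1.7]{NS18} lets me reduce to showing $\varphi_p$ is the localization inverting $\mu^{p^{n+1}}$, i.e.\ that it is an equivalence in degrees $>$ some bound and $\mu^{p^{n+1}}$ maps to a unit.

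Concretely: the classes $\lambda_i$ come from $\THH(\MU;\bF_p)$ (or $\THH(\BP;\bF_p)$) and are sent by $\varphi_p$ to their images, which are nonzero since they are detected after mapping to known computations for $\BP$; the class $\mu^{p^{n+1}}$ maps to $\mu^{p^{n+1}}\in \bF_p[\mu^{\pm1}]$ via the right vertical map and hence is invertible in the target (as the target is a module over $\THH(\bF_p)^{tC_p}$-type ring where this is a unit, via $\bF_p^{\otimes p}\to \bF_p$). Thus $\varphi_p$ factors through the localization $\bF_p\langle\lambda_i\rangle[\mu^{\pm p^{n+1}}]$, and a dimension count in each degree from the Tate spectral sequence shows this factored map is an isomorphism. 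The right vertical map is then forced: it kills $\lambda_i$ (they map to zero in $\THH_*(\bF_p)$ hence in its Tate) and includes $\mu^{\pm p^{n+1}}$.

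The main obstacle is the identification of the target $\pi_*\THH(\BP\langle n\rangle;\bF_p^{\otimes p})^{tC_p}$ without $\bE_2$ structure: there is no multiplication to propagate invertibility of $\mu^{p^{n+1}}$. I would handle this by the $\bE_0$-module structure over $\THH(\BP)$ and $\THH(\bF_p)^{tC_p}$-linearity in the coefficient variable, checking the degreewise count via the Tate spectral sequence with the Hochschild–May input from \autoref{prop:Hocschild-MaySSBPn}.
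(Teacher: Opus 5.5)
Your handling of the bottom row, the commutativity of the square and the left vertical map is fine. The top row, however, is the whole content of the statement, and it is not established: the specific arguments you sketch for it do not work.

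\textbf{The action of $\mu^{p^{n+1}}$.} Since $\BP\langle n\rangle$ is only $\bE_1$, ``inverting $\mu^{p^{n+1}}$'' has no meaning until some ring acts through a class lifting $\mu^{p^{n+1}}$. Neither substitute you propose provides one.
\begin{itemize}
\item The $\THH(\BP)$-module structure only supplies the $\lambda_i$, since $\THH_*(\BP;\bF_p)=\bF_p\langle\lambda_i : i\ge 1\rangle$ contains no $\mu$-class.
\item You have not produced, and there is no evident, $\THH(\bF_p)^{tC_p}$-module structure on $\THH(\BP\langle n\rangle;\bF_p^{\otimes p})^{tC_p}$. The available map goes \emph{from} it \emph{to} $\THH(\bF_p)^{tC_p}$. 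Without such a structure, knowing that $\mu^{p^{n+1}}$ maps to a unit under the right vertical map tells you nothing, because that map kills every $\lambda_S$ with $S\neq\emptyset$.
\item Nonvanishing of $\varphi_p(\lambda_i)$ cannot be ``detected for $\BP$'': the comparison map runs $\THH(\BP)\to\THH(\BP\langle n\rangle)$, and $\lambda_i$ dies in $\THH(\bF_p)^{tC_p}$.
\end{itemize}
The ingredient you are missing, which the paper uses, is the ring $\tau_{\ge 0}L_n^f\BP$. Because $v_{n+1}$ maps to zero in $L_n^f\BP$, the class $\sigma^2v_{n+1}\in\pi_{2p^{n+1}}\THH(\tau_{\ge0}L_n^f\BP/\MU)$ is defined and lifts $\mu^{p^{n+1}}$. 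Via $\tau_{\ge0}L_n^f\BP\to\tau_{\ge0}L_n^f\BP\langle n\rangle\simeq\BP\langle n\rangle$ this class acts on $\THH(\BP\langle n\rangle)$. That makes the Frobenius a map of $\bF_p[\mu^{p^{n+1}}]$-modules, which is what gives ``inverting $\mu^{p^{n+1}}$'' a meaning.

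\textbf{The computation of the target.} Your ``cell'' reduction is not valid. The map $\varphi_p$ is functorial in the pair $(R,M)$, not in $\THH(R;M)$ regarded as a $\THH(\MU;\bF_p)$-module, and $(-)^{tC_p}$ does not commute with the filtered colimits in an even-flat presentation. What remains after the reduction (``equivalence in high degrees and $\mu^{p^{n+1}}$ a unit'') is just a restatement of the claim. The Tate spectral sequence for the cyclic tensor product is only asserted (``one expects''). Its differentials, which your degreewise dimension count needs, are exactly the difficulty.

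The paper instead filters by the Adams tower. There $\mathrm{gr}^*_{\mathrm{Ad}}\BP\langle n\rangle\simeq\bF_p\otimes\bS[v_0,\dots,v_n]$ as an $\bE_1$-$\mathrm{gr}^*_{\mathrm{Ad}}\BP$-algebra. The Segal conjecture for $\bS[x]$ with $|x|>0$ even \cite[Proposition~4.1.4]{HW22}, together with $\bF_p[\mu]\to\bF_p[\mu^{\pm1}]$ for $\bF_p$, shows the Frobenius inverts $\mu$ on $\EE_2$-pages. The source spectral sequence is the Hochschild--May spectral sequence already computed in \autoref{prop:Hocschild-MaySSBPn}, in which every class except the powers of $\mu$ is a permanent cycle. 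Hence on every page the map is inversion of a power of $\mu$, in particular injective. This forces the differentials in the target spectral sequence and yields $\bF_p\langle\lambda_1,\dots,\lambda_{n+1}\rangle[\mu^{\pm p^{n+1}}]$.
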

\begin{proof}
First, note that by construction there is a map
\[ L_{n}^f\BP\longrightarrow  L_{n}^f\BP\langle n\rangle\]
and since $\tau_{\ge 0}L_{n}^f\BP\langle n\rangle\simeq \BP\langle n\rangle$ there is a canonical map 
\[ \tau_{\ge 0}L_{n}^f\BP \longrightarrow  \tau_{\ge 0}L_{n}^f\BP\langle n\rangle\simeq \BP\langle n\rangle \,.
\]
By \cite[Theorem~1.9]{Hov95} and~\cite[Theorem~1.5.4]{Hov97}, we know $v_{n+1}\in \pi_{2p^{n+1}-2}\MU$ maps trivially in $\pi_{2p^{n+1}-2}L_{n}^{f}\BP$, we can consider the element 
\[
\sigma^{2}v_{n+1}\in \pi_{2p^{n+1}}\THH(\tau_{\ge 0}L_n^f\BP/\MU) \,,
\]
which maps to a non-zero class 
\[
\sigma^{2}v_{n+1}=\mu^{p^{n+1}}\in \pi_{2p^{n+1}}\THH(\bF_p/\MU)\,. 
\]
This produces an element  $\mu^{p^{n+1}}$ in the bigraded commutative ring 
\[
\pi_*\grmot^*\THH(\tau_{\ge 0}L_n^{f}\BP)
\] 
which acts on $\pi_*\grmot^*\THH(\BP\langle n\rangle)$. It therefore makes sense to consider the map 
\[ \pi_*\grmot^*\THH(\BP\langle n\rangle ;\bF_p) \longrightarrow \pi_*\grmot^*\THH(\BP\langle n\rangle ;\bF_p^{\otimes p})^{tC_p}
\]
as a map of $\bF_p[\mu^{p^{n+1}}]$-modules and it makes sense to ask whether the map is given by inverting the class $\mu^{p^{n+1}}$. 

As in the proof of \autoref{prop:Hocschild-MaySSBPn}, given a spectrum $X$, write $\fil_{\textup{Ad}}^*X$ for the filtered spectrum
\[ 
\lim_{\Delta }\tau_{\ge *} (X \otimes \bF_p^{\otimes \bullet+1}) \,.
\]
and write $\gr_{\textup{Ad}}^*X$ for its associated graded. Recall that by~\cite{PP}, when $R$ is an $\mathbb{E}_m$-ring and $S$ is an $\mathbb{E}_{m-1}$-$R$-algebra then $\fil_{\textup{Ad}}^*R$ is an $\mathbb{E}_{m-1}$ $\fil_{\textup{Ad}}^*S$-algebra and similarly $\gr_{\textup{Ad}}^*R$ is an $\mathbb{E}_{m-1}$ $\gr_{\textup{Ad}}^*S$-algebra for $m\ge 1$. As in \cite[Recollection~4.3.2]{HW22}, we can determine that $\gr_{\textup{Ad}}^*\BP$ can be identified with $\bF_p\otimes \bS[v_0,v_1,\cdots]$ as an $\mathbb{E}_2$-algebra and $\gr_{\textup{Ad}}^*\BP\langle n\rangle$ can be identified with $\bF_p\otimes \bS[v_0,v_1,\cdots ,v_n]$ as $\bE_1$ $\gr_{\textup{Ad}}^*\BP$-algebra where $|v_i|=2p^i-2$ for $i\ge 1$. 

By \cite[Proposition~4.1.4]{HW22}, the map 
\[ 
L_p\THH(\bS[x];\bS)\to \THH(\bS[x];\bS^{\otimes p})^{tC_p} 
\]
is the $p$-completion map when $|x|=2j$ for some integer $j>0$; here, one defines $L_p$ as in~\cite[Appendix~A]{AMMN22}. By~\cite[Lemma~4.1.3]{HW22}, we can also identify
\[ \bF_p\otimes \THH(\bS[x];\bS)=\bF_p\langle \sigma v_i \rangle \]
when $|x|=2j$ for some integer $j>0$. We can therefore consider the $\mathbb{E}_0$-$\gr_{\textup{Ad}}^*\mathrm{BP}$-algebra structure to show that produce an action of $\mu$ on the map of $\EE_2$-pages 
\begin{equation}\label{map of spectral sequences}
\begin{tikzcd}
\pi_*\THH(\bF_p)\otimes_{\bF_p}\pi_*\bF_p\otimes L_p\THH(\bS[v_0,\dots ,v_n];\bS)\arrow{d} \\
\THH_*(\bF_p)^{tC_p}\otimes_{\bF_p}\pi_*\bF_p\otimes \THH(\bS[v_0,\dots ,v_n];\bS^{\otimes p})^{tC_p} \,.
\end{tikzcd}
\end{equation}
associated to the map of filtered spectra 
\[
L_p\THH(\mathrm{fil}_{\textup{Ad}}^*\mathrm{BP}\langle n\rangle; \mathrm{fil}_{\textup{Ad}}^*\bF_p)\longrightarrow 
\THH(\mathrm{fil}_{\textup{Ad}}^*\mathrm{BP}\langle n\rangle ;(\mathrm{fil}_{\textup{Ad}}^*\bF_p)^{\otimes p})^{tC_p}  \,,
\]
see~\cite[Appendix~A]{AMMN22} for the definition of $L_p$. Using this, the remarks above,  and the known computation 
\[ \THH_*(\bF_p)\to \THH_*(\bF_p)^{tC_p} \]
is given by the map $\bF_p[\mu]\to \bF_p[\mu,\mu^{-1}]$ inverting $\mu$, see~\cite[Proposition~4.3]{HM97}, we can show that this map is given by inverting $\mu$ on the $\EE_2$-pages, cf.~proof of \cite[Proposition~4.2.1]{HW22}. 
We have completely computed the top spectral sequence of \eqref{map of spectral sequences} in \autoref{prop:Hocschild-MaySSBPn} and since the bottom spectral sequence is given by inverting $\mu$ on $\EE_2$-pages, and all classes except powers of $\mu$ are infinite cycles, 
we determine that at each page the spectral sequence is given by inverting some power of $\mu$, and hence remains a monomorphism entirely determining also the spectral sequence on the bottom. 
This proves the claim. 
\end{proof}

\begin{proof}[Proof of \autoref{thm:Hodge-Tate}]
It suffices to show that the map 
\begin{equation}\label{Frobenius grmot} \pi_*((\grmot^*\mathrm{THH}(\mathrm{BP}\langle n\rangle ))/(p,\cdots ,v_n)) \to \pi_*((\grmot^*\mathrm{THH}(\mathrm{BP}\langle n\rangle )^{tC_p})/(p,\cdots ,v_n)) \end{equation}
of motivic spectral sequences is a map of collapsing motivic spectral sequences. The claim then follows by quotienting by $v_{n+1}$ on the commutative diagram
\[
\begin{tikzcd}
\pi_*((\grmot^*\mathrm{THH}(\mathrm{BP}\langle n\rangle ))/(p,\cdots ,v_n))  \arrow{r}  \arrow[d,"="]  &  \pi_*((\grmot^*\mathrm{THH}(\mathrm{BP}\langle n\rangle )^{tC_p})/(p,\cdots ,v_n)) \arrow[d,"="] \\ 
\pi_*\grmot^*\mathrm{THH}(\mathrm{BP}\langle n\rangle ;\bF_p) \arrow{r}  \arrow{d}  &  \pi_*\grmot^*\mathrm{THH}(\mathrm{BP}\langle n\rangle ;\bF_p^{\otimes p} )^{tC_p} \arrow{d} \\ 
\pi_*\grmot^*\mathrm{THH}(\bF_p ) \arrow{r} \arrow{d} &  \pi_*\grmot^*\mathrm{THH}(\bF_p )^{tC_p} \arrow{d}\\ 
\pi_*((\grmot^*\mathrm{THH}(\bF_p ))/(p,\cdots ,v_n)) \arrow{r} & \pi_*((\grmot^*\mathrm{THH}(\bF_p )^{tC_p})(p,\cdots ,v_n) )
\end{tikzcd}
\]
of $\grmot^*\mathrm{THH}(\mathrm{MU})$-modules on which $v_{n+1}$-acts trivially. 

Consider the map of cobar complexes 
\begin{equation}\label{cobar complex computing motivic filtration}
\begin{tikzcd}
\pi_* \left ( \mathrm{THH}(\mathrm{BP}\langle n\rangle ;\bF_p)\otimes_{\mathrm{THH}(\mathrm{MU};\bF_p)}\bF_p^{\otimes_{\mathrm{THH}(\mathrm{MU};\bF_p)} \bullet +1}  \right ) \arrow{d} \\ 
\pi_* \left ( \mathrm{THH}(\mathrm{BP}\langle n\rangle ;\bF_p^{\otimes p})^{tC_p}\otimes_{\mathrm{THH}(\mathrm{MU};\bF_p^{\otimes p})^{tC_p}}((\bF_p^{\otimes p})^{tC_p})^{\otimes_{\mathrm{THH}(\mathrm{MU};\bF_p^{\otimes p})^{tC_p}} \bullet +1}  \right ) 
\end{tikzcd}
\end{equation}
whose cohomology is the map \eqref{Frobenius grmot}. 
By~\cite{LNR11} and~\cite{LNR12}, we know that the maps $\mathrm{THH}(\mathrm{MU};\bF_p)\to \mathrm{THH}(\mathrm{MU};\bF_p^{\otimes p})^{tC_p}$ and $\bF_p\to (\bF_p^{\otimes p})^{tC_p}$ are equivalences,  c.f.~\cite{HW21,AKZ25}.  Filtering by 
\begin{equation}\label{Hodge--Tate--May--Ravenel}
\begin{tikzcd}
\tau_{\ge \bullet} \mathrm{THH}(\mathrm{BP}\langle n\rangle ;\bF_p)\otimes_{\tau_{\ge \bullet} \mathrm{THH}(\mathrm{MU};\bF_p)}\tau_{\ge \bullet} \bF_p^{\otimes_{\tau_{\ge \bullet} \mathrm{THH}(\mathrm{MU};\bF_p)} \bullet +1}  \arrow{d} \\ 
\tau_{\ge \bullet} (\mathrm{THH}(\mathrm{BP}\langle n\rangle ;\bF_p^{\otimes p})^{tC_p})\otimes_{\tau_{\ge \bullet} (\mathrm{THH}(\mathrm{MU};\bF_p^{\otimes p})^{tC_p})}((\tau_{\ge \bullet} (\bF_p^{\otimes p})^{tC_p}))^{\otimes_{\tau_{\ge \bullet} (\mathrm{THH}(\mathrm{MU};\bF_p^{\otimes p})^{tC_p})} \bullet +1}  \,, 
\end{tikzcd}
\end{equation}
and doing the same with $\mathrm{BP}\langle n\rangle$ replaced by $\tau_{\ge 0}L_n^f\mathrm{BP}$ we determine that on associated graded the map is given by inverting $\mu^{p^{n+1}}$ by \autoref{Hodge-Tate-nonmotivic}. We then note that 
\[ \pi_*\mathrm{THH}(\mathrm{BP}\langle n\rangle ;\bF_p^{\otimes p})^{tC_p}=\pi_*\mathrm{THH}(\mathrm{BP}\langle n\rangle ;\bF_p)[\mu^{-p^{n+1}}] \]
is even flat over $\pi_*\mathrm{THH}(\mathrm{MU};\bF_p^{\otimes p})^{tC_p}\cong \pi_*\mathrm{THH}(\mathrm{MU};\bF_p)$ since it is a filtered colimit of perfect even $\pi_*\mathrm{THH}(\mathrm{MU};\bF_p)$-modules using~\autoref{algebraic eff map}. By a similar argument to~\autoref{algebraic eff map}, see \cite[Lemma~2.2.17]{AKHW24}, we also know that the spectral sequence associated to the filtration \eqref{Hodge--Tate--May--Ravenel} collapses at the $\EE_2$-page since the $\EE_2$-page is concentrated in even degrees.  We can therefore regard the $\EE_2=\EE_{\infty}$-page as the associated graded of a filtration on the cobar complex \eqref{cobar complex computing motivic filtration} and consider the associated May--Ravenel spectral sequence~\cite[Theorem~A1.3.9]{Rav86}.  
Since it is given by inverting a class $\mu^{p^{n+1}}$ in degree $2p^{n+1}$, Adams weight zero and May--Ravenel filtration $2p^{n+1}$, we can determine that the May--Ravenel spectral sequence converges using \autoref{cotor-computation}, the fact that 
$\pi_*\mathrm{gr}_{\textup{mot}}^*\mathrm{THH}(\mathrm{BP}\langle n\rangle;\bF_p)$ is connective and concentrated in finitely many Adams weights and applying \cite[Lemma~2.34]{CM21} (cf.~\cite[Theorem~2.13]{MRS01}). For the relevant grading conventions of this spectral sequence, see \autoref{May--Ravenel}. We determine that the $\EE_2$-page of the May--Ravenel spectral sequence can be identified with the abutment and therefore we determine that the map of motivic spectral sequences is a map of collapsing motivic spectral sequences and it is given by inverting $\mu^{p^{n+1}}$ as desired. 
\end{proof}

 \section{Prismatic cohomology}\label{sec:prismatic}

Recall that there is a map  
\[ (\grmot^*\THH(\BP\langle n\rangle ))/(p,v_1,\cdots ,v_{n+1})\to(\grmot^*\THH(\bF_p))/(p,v_1,\cdots,v_{n+1})\]
of $\bE_0$ $(\grmot^*\THH(\MU))/(p,\cdots ,v_{n+1})$-algebras induced by the canonical $\bE_1$ $\BP$-algebra reduction map 
\[\BP\langle n\rangle\to \tau_{\le 0}\BP\langle n\rangle=\bZ_{(p)}\to \bF_p.\] 
We computed that this map is injective mod $(\lambda_1,\lambda_2,\cdots,\lambda_{n+1})$, with image $\bF_p[\mu^{p^n}]\otimes \Lambda (\varepsilon_{n+1})$, 
in \autoref{thm:map-to-fp-from-bpn}. This is the key fact we need in this section. 

First, we recall the definition of the associated graded of the motivic filtration used to define Nygaard completed prismatic cohomology and related invariants. 

\begin{defin}
Let $R$ be an $\mathbb{E}_1$-$\mathrm{MU}$-algebra such that $\mathrm{THH}(R/\mathrm{MU}^{\otimes q+1})_p^{\wedge}$ is even, then we define 
\[ \mathrm{fil}_{\textup{mot}}^*\mathrm{TP}(R)_p^{\wedge}=\lim_{\Delta} \tau_{\ge 2*} \mathrm{TP}(R/\mathrm{MU}^{\otimes \bullet+1})_p^{\wedge} \]
with associated graded $\mathrm{gr}_{\textup{mot}}^*\mathrm{TP}(R)_p^{\wedge}$. 
We call $\pi_*\mathrm{gr}_{\textup{mot}}^*\mathrm{TP}(R)_p^{\wedge}$
the \emph{Nygaard completed prismatic cohomology} of $R$. This is justified by \cite[\S~4, Corollary~A.2.11]{HRW22}. We also define 
\[ \filmot^*\TC^{-}(R)_p^{\wedge}= \lim_{\Delta} \tau_{\ge 2*}\TC^{-}(R/\mathrm{MU}^{\otimes q+1})_p^{\wedge}\]
with associated graded $\grmot^*\TC^{-}(R)_p^{\wedge}$. 
By~\cite[Example~4.2.3, Corollary~A.2.10]{HRW22}, there are filtered maps 
\[ \mathrm{can},\varphi :  \mathrm{fil}_{\textup{mot}}^*\TC^{-}(R)_p^{\wedge}\to  \mathrm{fil}_{\textup{mot}}^*\TP(R)_p^{\wedge}\]
converging to the canonical and Frobenius maps of~\cite{NS18} whose equalizer 
\[ \mathrm{eq} \left ( \xymatrix{ \mathrm{fil}_{\textup{mot}}^*\TC^{-}(R)_p^{\wedge}\ar@<1ex>[r]^{\textup{can}} \ar@<-1ex>[r]_{\varphi} &  \mathrm{fil}_{\textup{mot}}^*\mathrm{TP}(R)_p^{\wedge} }\right ) \]
can be identified with $\mathrm{fil}_{\textup{mot}}^*\TC(R)_p^{\wedge}$ with associated graded 
$\mathrm{gr}_{\textup{mot}}^*\TC(R)_p^{\wedge}$. 
We call 
\[\pi_*\mathrm{gr}_{\textup{mot}}^*\TC(R)_p^{\wedge}\] 
the ($\MU$-based) \emph{syntomic cohomology} of $R$ in light of~\cite[\S~4, Corollary~A.2.11]{HRW22}.
We will write 
\[  \mathrm{gr}_{\textup{mot}}^*F(R)/(p,\cdots ,v_m) = (\mathrm{gr}_{\textup{mot}}^*F(R)_p^{\wedge})/(p,\cdots ,v_m)\]
for $F\in \{\mathrm{THH},\mathrm{THH}^{tC_p}, \mathrm{TC}^{-}, \mathrm{TP}, \mathrm{TC}\}$ and $m\ge 0$ an integer since these agree by~\cite[Example~4.2.3, Corollary~A.2.6]{HRW22} and the same argument as \cite[Propositions~2.4.1]{HRW22}. Here, we use the fact that  $\mathrm{gr}_{\textup{mot}}^*F(R)_p^{\wedge}$ is a $\gr_{\ev}^*\mathbb{S}$-module for  $F\in \{\mathrm{THH},\mathrm{THH}^{tC_p}, \mathrm{TC}^{-}, \mathrm{TP}, \mathrm{TC}\}$. 
$\grmot^*\TC^{-}(R)_p^{\wedge}$ and  $\mathrm{gr}_{\textup{mot}}^*\mathrm{TP}(R)_p^{\wedge}$ are $\mathrm{gr}_{\textup{ev}}^*\mathbb{S}$-modules. We will refer to 
\[\pi_*\mathrm{gr}_{\textup{mot}}^*\mathrm{TC}(R)/(p,v_1,\cdots ,v_m)\] 
as the ($\MU$-based) \emph{mod $(p,v_1,\cdots,v_m)$ syntomic cohomology} of $R$. 
\end{defin}

\begin{defin}\label{def:Nygaard-filt}
Let $R$ be an $\mathbb{E}_1$-$\mathrm{MU}$-algebra such that $\mathrm{THH}(R/\mathrm{MU}^{\otimes q+1})_p^{\wedge}$ is even and $p$-torsion free. Let $m\ge 0$ be an integer.
By~\cite[Variant~A.1.8]{HRW22} (cf.~\cite[\S~6]{AKAR}) and  \cite[Lemma~II.4.2]{NS18}, there are compatible algebraic $t$-Bockstein spectral sequences  \footnote{We use the terminology from \cite[\S~6.5]{HRW22}.} 
\[
\begin{tikzcd}
\EE_2^{s,2w-s,*}=\pi_s\grmot^w\THH(R)/(p,v_1,\cdots ,v_{m})[t,t^{-1}]\arrow[r,Rightarrow] & \pi_*\grmot^*\TP(R)/(p,v_1,\cdots,v_{m})  \\
\EE_2^{s,2w-s,*}=\pi_s\grmot^w\THH(R)/(p,v_1,\cdots ,v_{m})[t] \arrow{u}\arrow{d}\arrow[r,Rightarrow] & \pi_*\grmot^*\TC^{-}(R)/(p,v_1,\cdots,v_{m})  \arrow{u}[swap]{\textup{can}} \arrow{d}{\varphi}\\
\EE_2^{s,2w-s,*}=\pi_s\grmot^w\THH(R)^{tC_p}/(p,v_1,\cdots ,v_{m})[t] \arrow[r,Rightarrow] &\pi_*\grmot^*\TP(R)/(p,v_1,\cdots,v_{m})  
\end{tikzcd}
\]
where  $t$ is in degree $-2$ and Adams weight zero. We refer to the filtration on these spectral sequences as the \emph{Nygaard filtration} and here $t$ is in Nygaard filtration $1$ and classes 
\[x\in \pi_*\grmot^*\THH(R)/(p,v_1,\cdots ,v_m)\] have Nygaard filtration zero. 
The differential convention is then 
\[ d_r :\EE_2^{s,2w-s,f}\to \EE_2^{s-1,2w-s+1,f+r} \]
where $s$ is the degree, $2w-s$ is the Adams weight and $f$ is the Nygaard filtration. 
\end{defin}

\begin{proposition}\label{thm:Hodge-Tateresults}
The algebraic $t$-Bockstein spectral sequence 
\[
\begin{tikzcd}
\EE_2^{s,2w-s,*}=\pi_s((\grmot^w\THH\BP\langle n\rangle)^{tC_p})/(p,v_1,\cdots ,v_{n+1}))[t]  \arrow[d,Rightarrow] \\
\pi_*((\grmot^*\TP(\BP\langle n\rangle))/(p,v_1,\cdots,v_{n+1}))
\end{tikzcd}
\]
has $\EE_2$-page 
\[
\pi_*((\grmot^*\THH(\BP\langle n\rangle)^{tC_p})/(p,v_1,\cdots ,v_{n+1})[t]=\bF_p\langle \lambda_1,\cdots ,\lambda_{n+1}\rangle [\mu^{\pm p^n},t]\langle \varepsilon_{n+1} \rangle 
\]
and it collapses after the differential
\[  
    d_1(\varepsilon_{n+1})=t\mu^{p^{n+1}}
\]
along with those differentials given by the Leibniz rule 
\[ 
    d_1(\varepsilon_{n+1}x)=t\mu^{p^{n+1}}x
\]
where $x\in \bF_p\langle \lambda_1,\cdots ,\lambda_{n+1}\rangle [\mu^{\pm p^n},t]$. 
We can therefore identify 
\[ 
 \pi_*((\grmot^*(\THH(\BP\langle n\rangle)^{tC_p})^{hS^1})/(p,v_1,\cdots,v_{n+1}))= \bF_p\langle \lambda_1,\cdots ,\lambda_{n+1}\rangle[\mu^{\pm p^{n+1}}]
\]
where we abuse notation and write the same name for the class in the abutment and the class in the $\EE_2$-page since each class is detected by the class of the same name without indeterminacy. 
\end{proposition}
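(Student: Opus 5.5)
The plan has four steps: read off the $\EE_2$-page from \autoref{thm:Hodge-Tate}, pin down the single $d_1$ by comparison with $\bF_p$, propagate it by the module structure, and check that nothing survives to later pages.

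\textbf{Step 1 ($\EE_2$-page).} By \autoref{thm:Hodge-Tate}, the mod $(p,v_1,\cdots,v_{n+1})$ Hodge--Tate cohomology of $\BP\langle n\rangle$ is $\bF_p\langle\lambda_1,\cdots,\lambda_{n+1}\rangle[\mu^{\pm p^{n+1}}]\langle\varepsilon_{n+1}\rangle$. Adjoining $t$ as in \autoref{def:Nygaard-filt} gives the stated $\EE_2$-page directly.

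\textbf{Step 2 (the differential on $\varepsilon_{n+1}$).} I would compare with the same spectral sequence for $\bF_p$, using the map induced by $\BP\langle n\rangle\to\bF_p$. By \autoref{thm:Hodge-Tate}, on $\EE_2$-pages this map is the quotient by the $\lambda_i$ followed by the inclusion into $\bF_p[\mu^{\pm1},t]\langle\epsilon_0,\cdots,\epsilon_{n+1}\rangle$, and by \autoref{thm:map-to-fp-from-bpn} it sends $\varepsilon_{n+1}$ to $\epsilon_{n+1}$.
\begin{itemize}
\item In the target we have $\pi_*\TP(\bF_p)/(p,\cdots,v_{n+1})$, where the class $v_{n+1}$ is detected by $t^{p^{n+1}-1}\cdot t\mu^{p^{n+1}}$-type expressions. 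Concretely, $\epsilon_j$ is the class created by killing $v_j$, and $v_j$ is detected by $t\mu^{p^j}$ up to a unit.
\item I would therefore establish $d_1(\epsilon_j)=t\mu^{p^j}$ in the $\bF_p$ spectral sequence. This follows from the defining cofiber sequence of $\epsilon_j$ (\autoref{def-v_n}) together with the known identification of $v_j$ in $\TP_*(\bF_p)$, where $p\mapsto t\mu$ and $v_j\mapsto (t\mu)^{p^j}$ modulo lower terms, as in \cite{HRW22}.
\item The $\EE_2$-page for $\BP\langle n\rangle$ maps injectively after quotienting by the $\lambda_i$. The only candidate target for $d_1(\varepsilon_{n+1})$ in the correct tridegree $(2p^{n+1}-2,0,1)$ is, up to a unit, $t\mu^{p^{n+1}}$ plus $\lambda$-decomposable terms.
\item Checking tridegrees rules out $\lambda$-decomposables: each $\lambda_i$ has Adams weight $1$, but the target has Adams weight $0$.
\item Naturality then forces $d_1(\varepsilon_{n+1})=t\mu^{p^{n+1}}$, with nonzero coefficient.
\end{itemize}

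\textbf{Step 3 (Leibniz).} The spectral sequence is a module over the one for $\bF_p\langle\lambda_1,\cdots,\lambda_{n+1}\rangle[\mu^{\pm p^{n+1}},t]$ via the $\bE_0$ $\grmot^*\THH(\MU)$-structure and the action of $\mu^{p^{n+1}}$ coming from $\tau_{\ge0}L_n^f\BP$ (as in \autoref{Hodge-Tate-nonmotivic}). I must check that these module generators are $d_1$-cycles; this holds because they lie in the image from $\TP$ of $\MU$ and of $\tau_{\ge0}L_n^f\BP$. Granting this, $d_1(\varepsilon_{n+1}x)=t\mu^{p^{n+1}}x$.

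\textbf{Step 4 (collapse and identification).} Since $\mu^{p^{n+1}}$ is a unit, $d_1$ maps $\varepsilon_{n+1}\cdot\bF_p\langle\lambda\rangle[\mu^{\pm p^{n+1}},t]$ isomorphically onto $t\cdot\bF_p\langle\lambda\rangle[\mu^{\pm p^{n+1}},t]$. The $\EE_3$-page is therefore $\bF_p\langle\lambda_1,\cdots,\lambda_{n+1}\rangle[\mu^{\pm p^{n+1}}]$, concentrated in Nygaard filtration $0$, so no further differentials are possible. There is no room for extensions or indeterminacy, since everything lives in filtration zero and the abutment is $t$-torsion in the appropriate sense.

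The main obstacle is Step 2: justifying the formula $d_1(\epsilon_{n+1})=t\mu^{p^{n+1}}$ in the $\bF_p$ spectral sequence, which amounts to identifying the image of $v_{n+1}$ in Nygaard filtration one of motivic $\TP(\bF_p)$. I would try first to deduce it from $\grmot^*\TP(\bF_p)\simeq\grmot^*\TC^-(\bF_p)$ and the formula for the image of $\grev^*\MU$ in $\pi_*\grmot^*\TP(\bF_p)$.
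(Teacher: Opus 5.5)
Your Steps 1, 3 and 4 are essentially the paper's argument. Step 2, however, has a genuine gap: the $\bF_p$ spectral sequence cannot supply the differential you want to pull back, so the comparison is circular.

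In $\pi_*((\grmot^*\THH(\bF_p))/(p,v_1,\cdots,v_{n+1}))$ the bidegree $(2p^{n+1}-1,-1)$ contains $\epsilon_{n+1}$ and also the classes $\mu^{p^{n+1}-p^j}\epsilon_j$ for $0\le j\le n$. So over $\bF_p$ a null-homotopy of $v_{n+1}$ is only defined up to these classes. The uniqueness in \autoref{def-v_n} uses that $\pi_*\grmot^*\THH(\BP\langle n\rangle;\bF_p)$ vanishes in Adams weight $-1$, and this fails for $\bF_p$ modulo $(p,\cdots,v_n)$.

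Since $p$ is detected by $t\mu$, you have $d_1(\epsilon_0)=t\mu$. Because $\mu$ is a cycle, it follows that $d_1(\mu^{p^{n+1}-1}\epsilon_0)=t\mu^{p^{n+1}}$. So as $\epsilon_{n+1}$ ranges over lifts of $1$, $d_1(\epsilon_{n+1})$ ranges over all of $\bF_p\{t\mu^{p^{n+1}}\}$, including $0$. Put differently, in the $\bF_p$ spectral sequence modulo $(p,\cdots,v_n)$ the class $t\mu^{p^{n+1}}$ is already a boundary. The statement ``$v_{n+1}$ is detected by $t\mu^{p^{n+1}}$'' therefore carries no information there. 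This is consistent with the fact that the image of $v_{n+1}$ in $\pi_{2p^{n+1}-2}\TC^{-}(\bF_p)\cong\bZ_p$ is divisible by $p$, since it dies in $\THH_*(\bF_p)$. Whether the particular lift $f(\varepsilon_{n+1})$ supports $d_1=t\mu^{p^{n+1}}$ is, by the very injectivity you invoke, equivalent to the statement you are trying to prove.

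The repair is to run your cofiber-sequence argument on the $\BP\langle n\rangle$ side, which is what the paper does. There $\varepsilon_{n+1}$ is the null-homotopy of $v_{n+1}$, unique up to a unit. In the mod $(p,\cdots,v_n)$ spectral sequence the class $t\mu^{p^{n+1}}$ cannot be hit by a differential, because its source would have Adams weight $-1$. Finally, $v_{n+1}$ is detected by this class by \cite[\S~5]{HW22}. Together these give $d_1(\varepsilon_{n+1})=t\mu^{p^{n+1}}$ directly. The monomorphism into the $\bF_p$ spectral sequence modulo $(\lambda_1,\cdots,\lambda_{n+1})$ is then used only in the other direction: it transports this known differential into a multiplicative spectral sequence, from which you read off the Leibniz rule.

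Two smaller corrections:
\begin{itemize}
\item $(t\mu)^{p^j}$ has degree $0$, so it cannot be the image of $v_j$.
\item In your tridegree check, $\lambda_i\varepsilon_{n+1}$ also has Adams weight $0$. It has to be excluded by degree (no $\lambda_i\varepsilon_{n+1}\mu^{kp^{n+1}}$ lies in degree $2p^{n+1}$), not by Adams weight.
\end{itemize}
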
 

\begin{proof}
Since $v_{n+1}$ is detected by $t\mu^{p^{n+1}}$ as in~\cite[\S~5]{HW22}, we determine that $d_1(\varepsilon_{n+1})=t\mu^{p^{n+1}}$ by \autoref{def-v_n}, since $\varepsilon_{n+1}$ corresponds to a null homotopy of $v_{n+1}$.  
Using the action of $\lambda_{1},\lambda_{2}, \cdots ,\lambda_{n+1}$ together with the monomorphism into the spectral sequence for $\bF_p$ modulo $(\lambda_{1},\lambda_{2}, \cdots ,\lambda_{n+1})$, 
we determine a Leibniz rule 
$d_1(x\varepsilon_{n+1})=xt\mu^{p^{n+1}}$ for all 
\[
x\in \pi_*((\grmot^*\THH(\BP\langle n\rangle)^{tC_p})/(p,v_1,\cdots ,v_n))\,.
\] 
Since $\mu$ is a unit in an algebra spectral sequence  that acts on this spectral sequence, namely the corresponding spectral sequence for $\tau_{\ge 0}L_n^f\BP$ (cf. proof of \autoref{thm:Hodge-Tate}), 
the spectral sequence collapses after running the first differential and all permanent cycles are concentrated in Nygaard filtration zero. Moreover, we can conclude that 
\[ 
\pi_*((\grmot^*\TP(\BP\langle n\rangle))/(p,v_1,\cdots ,v_{n+1}))\cong \ker (\sigma )
\]
where 
\[
\begin{tikzcd}
\pi_{*}((\grmot^*\THH(\BP\langle n\rangle )^{tC_p})/(p,v_1,\cdots ,v_{n+1})) \arrow{d}{\sigma} \\
\pi_{*+1}((\grmot^{*+1}\THH(\BP\langle n\rangle )^{tC_p} )/(p,v_1,\cdots ,v_{n+1}))
\end{tikzcd}
\]
is the $\sigma$-operator defined as in~\cite[\S~2.1.2]{AKHW24} using the $\filev^*\mathbb{S}[S^1]$-action. 
\end{proof}

\begin{notation}
We write 
\[ \lambda_{W}=\prod_{w\in W}\lambda_w
\]
for $W\subset \{1,\cdots ,n+1\}$ where $\lambda_{W}=1$ if $W=\emptyset$. 
\end{notation}

\begin{prop}\label{prop:prismatic-bpn}
The algebraic $t$-Bockstein spectral sequence
\[
\begin{tikzcd}
\mathrm{E}_2^{s,2w-s,*}=\pi_s((\grmot^w\THH(R))/(p,v_1,\cdots ,v_{n+1}))[t,t^{-1}]\arrow[d,Rightarrow]  \\
\pi_*((\grmot^*\TP(R))/(p,v_1,\cdots,v_{n+1})) 
\end{tikzcd}
 \]
has differentials  
\[ 
d_{1}(x\varepsilon_{n+1})=xt\mu^{p^{n+1}}\,,
\]
for all $x\in \pi_*\gr_{\mot}^*\THH(\BP\langle n\rangle )/(p,v_1,\cdots ,v_{n})[t^{\pm 1}]$
and 
\[d_{p^m}(t^{p^{m-1}})=t^{p^{m}+p^{m-1}}\lambda_m\]
for all $1\le m \le n+1$ as well as those differentials generated by the Leibniz rule
\[
d_{p^m}(t^{jp^{m-1}}\lambda_S)\dot{=}t^{p^{m}+jp^{m-1}}\lambda_m\lambda_S
\]
for $0<j<p$ and $S \subset \{1,\cdots ,n+1\} -\{m\}$. 
The spectral sequence then collapses at the $p^{n+1}+1$-page without room for further differentials. Consequently, 
\[ 
\pi_*((\gr_{\mot}^*\TP(\BP\langle n\rangle ))/(p,v_1,\cdots ,v_{n+1}))\cong \mathbb{F}_p\langle \lambda_1,\lambda_2,\cdots ,\lambda_{n+1}\rangle [t^{\pm p^{n+1}}] 
\]
as $\mathbb{F}_p\langle \lambda_1,\lambda_2,\cdots ,\lambda_{n+1}\rangle$-modules. 
\end{prop}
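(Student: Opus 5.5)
We compare the $t$-Bockstein spectral sequence for $\BP\langle n\rangle$ with the one for $\bF_p$ along the reduction map, and use the Frobenius comparison of \autoref{thm:Hodge-Tateresults} to fix the abutment. By \autoref{thm:map-to-fp-from-bpn}, the $\EE_2$-page is $\bF_p\langle\lambda_1,\dots,\lambda_{n+1}\rangle[\mu^{p^{n+1}}]\langle\varepsilon_{n+1}\rangle[t^{\pm1}]$. After dividing out $(\lambda_1,\dots,\lambda_{n+1})$, it maps injectively to the corresponding page for $\bF_p$, namely $\bF_p[\mu]\langle\epsilon_0,\dots,\epsilon_{n+1}\rangle[t^{\pm1}]$.

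\textbf{Step 1: the $d_1$-differential.} The differential $d_1(x\varepsilon_{n+1})=xt\mu^{p^{n+1}}$ is obtained exactly as in the proof of \autoref{thm:Hodge-Tateresults}. The class $v_{n+1}$ is detected by $t\mu^{p^{n+1}}$, and $\varepsilon_{n+1}$ is the null-homotopy of $v_{n+1}$ from \autoref{def-v_n}. The Leibniz rule for $x$ follows from the action of the $\lambda_i$, together with the monomorphism into the $\bF_p$-spectral sequence modulo the $\lambda_i$.

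After $d_1$, the page is $\bF_p\langle\lambda_1,\dots,\lambda_{n+1}\rangle[t^{\pm1}]$, with the $\mu$-powers all killed or killing. Now $\mu^{p^{n+1}}t$ is a boundary, so any $\mu^{kp^{n+1}}$ becomes $t^{-k}$ up to units. The remaining problem is to find which powers of $t$ survive.

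\textbf{Step 2: the abutment.} The canonical map becomes an equivalence on $\TP$ after Frobenius twisting. In analogy with the Segal-type comparison, $\varphi\colon\TC^-\to\TP$ is an isomorphism in high degrees. From \autoref{thm:Hodge-Tateresults}, $\pi_*\grmot^*\TP$ mod $(p,\dots,v_{n+1})$ is $\bF_p\langle\lambda_1,\dots,\lambda_{n+1}\rangle[\mu^{\pm p^{n+1}}]$. That group has rank $2^{n+1}$ in each degree congruent to $0 \bmod 2p^{n+1}$, and rank $0$ in the other even-type degrees. Under the identification $\mu^{p^{n+1}}\leftrightarrow t^{-1}$ up to filtration, the $E_\infty$-page must therefore be $\bF_p\langle\lambda_1,\dots,\lambda_{n+1}\rangle[t^{\pm p^{n+1}}]$. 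All $t^j$ with $p^{n+1}\nmid j$, and their $\lambda_S$-multiples, must be hit or support differentials.

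\textbf{Step 3: forcing the differentials.} We argue by induction on $m$. Consider $t^{p^{m-1}}$ at the stage where $t^{p^{m-1}}$ is still a cycle; on the earlier pages only $t^{p^{m-1}}$-powers survive. The first nonzero differential on $t^{p^{m-1}}$ must land in the degree of $t^{p^m+p^{m-1}}\lambda_m$, which has degree $-2p^{m-1}-1$ and Adams weight $1$. A bidegree check shows this is the only available target: the $\lambda_S$ with $|S|=1$ have degrees $2p^i-1$, and the Nygaard jump must equal the exponent difference. Hence $d_{p^m}(t^{p^{m-1}})\doteq t^{p^m+p^{m-1}}\lambda_m$; the coefficient is nonzero because otherwise too many classes would survive, contradicting Step 2.

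To get the general formula we apply the Leibniz rule. The spectral sequence is a module over the one for $\tau_{\ge0}L_n^f\BP$, or over the $\MU$ spectral sequence, where the $\lambda_m$ are cycles. Writing $t^{jp^{m-1}}=(t^{p^{m-1}})^j$ gives $d_{p^m}(t^{jp^{m-1}}\lambda_S)=jt^{p^m+jp^{m-1}}\lambda_m\lambda_S$, and $j$ is a unit for $0<j<p$.

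Counting then shows the page after $d_{p^{n+1}}$ is $\bF_p\langle\lambda_1,\dots,\lambda_{n+1}\rangle[t^{\pm p^{n+1}}]$. This matches Step 2, so no further differentials occur, and there is no room for extensions as $\bF_p\langle\lambda\rangle$-modules.

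\textbf{Main obstacle.} The main obstacle is Step 3. The spectral sequence is only an $\bE_0$-module over a multiplicative one, so the Leibniz rule has to be justified through that module structure, with the source of the differential $d_{p^m}(t^{p^{m-1}})$ imported from the $\BP$ or $\MU$ case as in \cite{HW22} rather than derived internally. I would first transport that differential from $\grmot^*\TP(\BP)$, where $t^{p^{m-1}}$ and $\lambda_m$ are honest multiplicative classes, and push it forward along the unit map.
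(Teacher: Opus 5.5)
Your Step~1 agrees with the paper, but Step~3 does not determine the higher differentials. In degree $-2p^{m-1}-1$ and Adams weight $1$, the page $\mathbb{F}_p\langle\lambda_1,\dots,\lambda_{n+1}\rangle[t^{\pm1}]$ contains $\lambda_i t^{p^i+p^{m-1}}$ for \emph{every} $i$. Each of these is a potential target of $d_{p^i}(t^{p^{m-1}})$, and those with $i>m$ are still present on the relevant pages. They differ only in Nygaard filtration, so no bidegree check singles out $i=m$.

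Counting against the abutment does not help either. \autoref{thm:Hodge-Tateresults} gives only the rank of $\pi_*((\grmot^*\TP(\mathrm{BP}\langle n\rangle))/(p,v_1,\dots,v_{n+1}))$ in each (degree, Adams weight) bidegree, and that cannot see which $\lambda_i$ a differential hits. Concretely, take $n=1$ and the $\lambda$-linear pattern $d_p=0$, $d_{p^2}(\lambda_S t^k)=\lambda_2\lambda_S t^{k+p^2}$ for $p^2\nmid k$ and $2\notin S$. It leaves an $\mathrm{E}_\infty$-page with exactly the ranks of $\mathbb{F}_p\langle\lambda_1,\lambda_2\rangle[t^{\pm p^2}]$ in every bidegree. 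So ``otherwise too many classes would survive'' is false, and $d_p(t)\neq 0$ is not forced internally.

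The missing input is the one you relegate to your last paragraph. It is the actual source of the differentials, not a technical justification of the Leibniz rule. The paper takes $d_{p^m}(t^{p^{m-1}}\lambda_S)=t^{p^m+p^{m-1}}\lambda_m\lambda_S$ from the $t$-Bockstein spectral sequence for $\mathrm{BP}$ computed in \cite[Proposition~3.2.9]{AKHW24}. It transports these differentials via the action of that spectral sequence on the one for $\mathrm{BP}\langle n\rangle$, with $\lambda_m\mapsto\lambda_m$ for $m\le n+1$. The same reference shows that $t^{p^{n+1}}$ survives to $\mathrm{E}_{p^{n+2}}$ and acts, so $\mathrm{E}_{p^{n+1}+1}=\mathrm{E}_{p^{n+2}}$, and collapse then follows from bidegrees.

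Once the differentials are imported this way, your count against the abutment of \autoref{thm:Hodge-Tateresults} is a fine substitute for the paper's collapse argument. That count is justified by $\TP\simeq(\THH^{tC_p})^{hS^1}$; the remark about $\varphi$ in high degrees is beside the point.

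Also correct three slips in Steps~1--2:
\begin{itemize}
\item On the $\mathrm{E}$-pages, $\mu^{kp^{n+1}}$ is just the $d_1$-boundary of $t^{-1}\mu^{(k-1)p^{n+1}}\varepsilon_{n+1}$, not ``$t^{-k}$ up to units''.
\item The Hodge--Tate class $\mu^{p^{n+1}}$ matches $t^{-p^{n+1}}$, not $t^{-1}$, and only in the abutment.
\item $\mathbb{F}_p\langle\lambda_1,\dots,\lambda_{n+1}\rangle[\mu^{\pm p^{n+1}}]$ has rank one, not $2^{n+1}$, in each degree divisible by $2p^{n+1}$.
\end{itemize}
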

\begin{proof}
The differential $d_{1}(\varepsilon_{n+1})=t\mu^{p^{n+1}}$ follows from \autoref{def-v_n}, i.e. $\varepsilon_{n+1}$ corresponds to a null homotopy of $v_{n+1}$, and \autoref{thm:map-to-fp-from-bpn}. The Leibniz rule 
\[d_{1}(x\varepsilon_{n+1})=xt\mu^{p^{n+1}}
\]
for all $x\in \pi_*((\gr_{\mot}^*\THH(\BP\langle n\rangle ))/(p,v_1,\cdots ,v_{n}))[t^{\pm 1}]$
follows from~\cite[Corollary 3.2.6]{AKHW24}, which implies that the classes $\lambda_1,\lambda_2,\cdots ,\lambda_{n+1}$ are permanent cycles that act on this spectral sequence, together with the monomorphism into the spectral sequence for $\bF_p$ modulo $(\lambda_1,\lambda_2,\cdots ,\lambda_{n+1})$. 

The differentials 
\[
d_{p^m}(t^{p^{m-1}}\lambda_S)=t^{p^{m}+p^{m-1}}\lambda_m\lambda_S,
\]
for all $S\subset \{1,\cdots ,n+1\}$ with $m\not\in S$ for each $m\ge 1$ follow from the action of the spectral sequence for $\BP$ and the differentials in that spectral sequence, computed in~\cite[Proposition~3.2.9]{AKHW24}. By~\cite[Proposition~3.2.9]{AKHW24}, we also  know that that $t^{p^{n+1}}$ survives to the $\EE_{p^{n+2}}$-page and it acts on the spectral sequence up until this page, so $\EE_{p^{n+1}+1}=\EE_{p^{n+2}}$.  
Finally, we simply check bidegrees and observe that the spectral sequence must collapse at the $\EE_{p^{n+2}}$-page. 
Note that we abuse notation and write $\lambda_1^{a_1}\cdot \lambda_2^{a_2}\cdot \ldots \cdot \lambda_{n+1}^{a_{n+1}}t^j$ for a choice of class detected by the class with the same name modulo higher Nygaard filtration. 
\end{proof}

\begin{remark}
Note that, as a consequence of \autoref{thm:Hodge-Tateresults}, we have an isomorphism 
\[ 
\pi_*((\grmot^*\TP(\BP\langle n\rangle))/(p,v_1,\cdots ,v_{n+1}))\cong \pi_*((\grmot^*\THH(\BP\langle n\rangle)^{tC_p})/(p,v_1,\cdots ,v_n))\,.
\]
This could alternatively be proven directly along the lines of~\cite[\S~6.4]{HRW22} and we believe that such a result holds quite generally, however one still needs to understand the Nygaard filtration on prismatic cohomology to compute syntomic cohomology. This is achieved in \autoref{prop:prismatic-bpn}
\end{remark}

  \section{Syntomic cohomology}\label{sec:syntomic}
We now have all the ingredients to compute the syntomic cohomology of $\BP\langle n\rangle$. First, we need to determine 
\[
\pi_*((\grmot^*\TC^{-}(\BP\langle n\rangle))/(p,v_1,\cdots,v_{n+1})) \,.
\] 

We note that these bigraded groups are equipped with the Nygaard filtration coming from the algebraic $t$-Bockstein spectral sequence of \autoref{def:Nygaard-filt} and therefore there is an exact sequence
\[
0 \to \mathrm{Nyg}_{\ge 1} \to \pi_*((\grmot^*\TC^{-}(\BP\langle n\rangle))/(p,v_1,\cdots,v_{n+1})) \to \mathrm{Nyg}_{=0} \to 0 
\]
where $\mathrm{Nyg}_{\ge 1}$ denotes the subgroup generated by classes in positive Nygaard filtration and $\mathrm{Nyg}_{=0}$ is defined to be the cokernel. Let $T=\ker (\can)$ and $F=\pi_*\grmot^*\TC^{-}(\BP\langle n\rangle)/\ker (\can)$. 

\begin{notation}
There is a commutative diagram 
\[
\begin{tikzcd}
& 0 \ar[d] & 0 \ar[d] & 0 \ar[d] &  \\ 
0 \ar[r] & A_{11}\ar[d] \ar[r] & \mathrm{T} \ar[d] \ar[r] &  A_{01} \ar[d]  \ar[r] & 0  \\
0 \ar[r] & \mathrm{Nyg}_{\ge 1}  \ar[r] \ar[d]& \pi_*\mathrm{TC}^{-}(\BP\langle n\rangle) \ar[r] \ar[d] & \mathrm{Nyg}_{=0} \ar[r] \ar[d] & 0 \\ 
0 \ar[r] & A_{10} \ar[d] \ar[r] & F \ar[r] \ar[d]& A_{00} \ar[r] \ar[d]& 0 \\ 
& 0  & 0  & 0   &  
\end{tikzcd}
\]
where
\begin{align*}
A_{11} &: =\mathrm{Nyg}_{\ge 1}\cap T \,,  \qquad A_{10}&: =\mathrm{Nyg}_{\ge 1}/A_{11}  \,,\qquad 
A_{01}&: =T/A_{11}  \qquad  \text{ and } \qquad
A_{00} &: = F/A_{10}  \,.
\end{align*} 
\end{notation}

\begin{proposition}\label{Nyg filtration crossing diff}
The classes in the $\EE_1$-page of the algebraic $t$-Bockstein spectral sequence that are hit by differentials that cross from negative Nygaard filtration to positive Nygaard filtration are exactly the classes contained in
\[
\bigoplus_{j=1}^{n+1}  \mathbb{F}_p\langle \lambda_{s} :1\le s\le n+1,s\ne j
\rangle \otimes \bF_p\{ t^{dp^{j-1}}\lambda_j : 0 < d < p \}\,.
\]
\end{proposition}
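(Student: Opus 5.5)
Our plan is to read this off directly from the differentials listed in \autoref{prop:prismatic-bpn}, since the algebraic $t$-Bockstein spectral sequence for $\TP$ is the one with $\EE_2$-page $\pi_*((\grmot^*\THH(\BP\langle n\rangle))/(p,v_1,\cdots,v_{n+1}))[t^{\pm 1}]$, and the one for $\TC^{-}$ is its restriction to non-negative powers of $t$ (nonnegative Nygaard filtration). A differential ``crosses'' from negative to positive Nygaard filtration precisely when its source is $t^{a}x$ with $a<0$ and its target is $t^{b}y$ with $b>0$. These are exactly the differentials present in the $\TP$ spectral sequence whose targets survive in the $\TC^{-}$ spectral sequence, since their sources are not present there. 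They are therefore the classes of $\mathrm{Nyg}_{\ge 1}$ that become zero in $\TP$ but not in $\TC^-$.

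\textbf{Step 1.} First I would dispose of the $d_1$-differentials $d_1(x\varepsilon_{n+1})=xt\mu^{p^{n+1}}$. They raise the power of $t$ by exactly one, so they never jump from a negative exponent to a positive one. At worst the source has $t^{-1}$ and the target has $t^0$, which is Nygaard filtration zero rather than positive. After the $d_1$-page, then, the surviving classes are $\bF_p\langle\lambda_1,\cdots,\lambda_{n+1}\rangle[t^{\pm1}]$ (the $\varepsilon_{n+1}$-multiples and the positive powers of $\mu^{p^{n+1}}$ cancel out). The $\TC^-$ spectral sequence gives the same page restricted to $t^{\ge 0}$, with an extra cokernel in filtration $0$ that does not matter here.

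\textbf{Step 2.} Next I would organise the remaining differentials $d_{p^m}(t^{jp^{m-1}}\lambda_S)\dot= t^{p^m+jp^{m-1}}\lambda_m\lambda_S$, with $m\notin S$, by the $p$-adic expansion of the $t$-exponent. I would write any $t$-exponent $k$ with $k\not\equiv 0 \bmod p^{n+1}$ as $k=p^{m-1}(j+pq)$ with $0<j<p$, where $m$ is the $p$-adic valuation of $k$ plus one. The class $t^k\lambda_S$ then supports a $d_{p^m}$ if $m\notin S$, and is hit by a $d_{p^m}$ if $m\in S$; the source of that differential is $t^{k-p^m}\lambda_{S-\{m\}}$. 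The crossing condition is $k-p^m<0<k$. With $k=p^{m-1}(j+pq)$ this forces $q=0$, so $k=jp^{m-1}$ with $0<j<p$. Conversely, for $k=dp^{m-1}$ with $0<d<p$ and $m\in S$, the source exponent $dp^{m-1}-p^m$ is negative. Setting $S=\{m\}\cup S'$ with $m\notin S'$ gives the targets
\[ t^{dp^{m-1}}\lambda_m\lambda_{S'}, \qquad 1\le m\le n+1,\ 0<d<p,\ S'\subset\{1,\cdots,n+1\}-\{m\}. \]
This is exactly the displayed sum, with $j=m$.

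\textbf{Step 3.} Finally I would check that these classes are not also hit by some other differential, or killed earlier on a shorter page. I would argue this using the collapse statement of \autoref{prop:prismatic-bpn} and the uniqueness of the $p$-adic valuation: each class $t^k\lambda_S$ with $p^{n+1}\nmid k$ participates in exactly one differential, of length $p^{v_p(k)+1}$. The classes $t^{kp^{n+1}}\lambda_S$ are permanent. Hence the list above is complete and consists precisely of crossing targets.

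The main obstacle is this bookkeeping: the $d_{p^m}$ are stated only up to units and via the Leibniz rule, so one must make sure the list in \autoref{prop:prismatic-bpn} accounts for every nonzero class on the relevant pages. I would handle it by counting dimensions in each bidegree, comparing the $\EE_2$-page with the abutment $\bF_p\langle\lambda_1,\cdots,\lambda_{n+1}\rangle[t^{\pm p^{n+1}}]$.
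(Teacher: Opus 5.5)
Your proposal is correct and follows the paper's approach: the paper's proof is the single line that the claim ``follows from a careful bookkeeping of the differentials determined in \autoref{prop:prismatic-bpn}.'' Your argument carries out that bookkeeping explicitly. You note that the $d_1$'s shift the $t$-exponent by one and so cannot cross. You then write the $t$-exponent as $k=p^{m-1}(j+pq)$ and observe that the crossing condition $k-p^m<0<k$ forces $q=0$.
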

\begin{proof}
This follows from a careful bookkeeping of the differentials determined in \autoref{prop:prismatic-bpn}. 
\end{proof}

The following corollary is immediate from \autoref{prop:prismatic-bpn} and \autoref{Nyg filtration crossing diff}. 

\begin{corollary}\label{cor:TC-BPn}
There is a preferred isomorphism 
\begin{align*} 
\pi_*((\grmot^*\TC^{-}(\BP\langle n\rangle))/(p,v_1,\cdots,v_{n+1}))\cong A_{00}\oplus A_{10}\oplus A_{01}\oplus A_{11} 
\end{align*}
as $\mathbb{F}_p\langle \lambda_1,\lambda_2,\cdots \lambda_{n+1}\rangle$-modules where 
\begin{align}
\label{a00} A_{00}  &=\bF_p\langle \lambda_1,\lambda_2,\cdots ,\lambda_{n+1}\rangle\,, \\
\label{a01} A_{01} & = \bF_p\langle \lambda_1,\lambda_2,\cdots ,\lambda_{n+1}\rangle [\mu^{p^{n+1}}]\{\mu^{p^{n+1}}\}  \,, \\ 
\label{a10} A_{10} & = \mathbb{F}_p\langle \lambda_1,\lambda_2,\cdots ,\lambda_{n+1}\rangle[t^{p^{n+1}}]\{t^{p^{n+1}}\} \text{ and } \\
\label{a11}  A_{11} & =\bigoplus_{j=1}^{n+1}  \mathbb{F}_p\langle \lambda_{s} :1\le s\le n+1,s\ne j \rangle \otimes \bF_p\{\Xi_{j,d} : 0 < d < p \} \,. 
\end{align}

We abuse notation and write $\lambda_1^{a_1}\cdot \lambda_2^{a_2}\cdot \ldots \cdot \lambda_{n+1}^{a_{n+1}}$ for the class detected by the class with the same name for $a_1,a_2,\cdots ,a_{n+1}\in \{0,1\}$ and $s\ge 0$. This is consistent with our choice to give the lifts of these classes to $\pi_*((\mathrm{gr}_{\mot}^*\mathrm{TC}(\mathrm{MU}))/p)$ the same name. 
We write $\lambda_1^{a_1}\cdot \ldots \lambda_{j-1}^{a_{j-1}}\cdot \lambda_{j+1}^{a_{j+1}}\cdot \ldots \cdot \lambda_{n+1}^{a_{n+1}}\Xi_{j,d}$ for a class detected by $\lambda_1^{a_1}\cdot \ldots \lambda_{j-1}^{a_{j-1}}\cdot \lambda_{j+1}^{a_{j+1}}\cdot \ldots \cdot \lambda_{n+1}^{a_{n+1}}t^{dp^{j-1}}\lambda_{j}$  where $a_s\in \{0,1\}$ for each $1\le s\le n+1$ where $s\ne j$. Within $\mathrm{ket}(\varphi-\mathrm{can})$ there is no indeterminacy in this choice. 
We abuse notation and write $\lambda_1^{a_1}\cdot \lambda_2^{a_2}\cdot \ldots \cdot \lambda_{n+1}^{a_{n+1}}\mu^{sp^{n+1}}$ for a class detected by the class with same name where $s\ge 1$ and $a_1,a_2,\cdots ,a_{n+1}\in \{0,1\}$. Again there is no potential indeterminacy when considering classes in $\mathrm{ker}(\varphi-\mathrm{can})$. 

Moreover, the canonical map 
\[ 
    \textup{can} : \pi_*((\grmot^*\TC^{-}(\BP\langle n\rangle ))/(p,v_1,\cdots ,v_{n+1})) \to \pi_*((\grmot^*\TP(\BP\langle n\rangle ))/(p,v_1,\cdots ,v_{n+1}))
\]
sends $\lambda_k$ to $\lambda_k$, $t^{p^{n+1}}$ to $t^{p^{n+1}}$, and sends $\lambda_S\cdot \Xi_{j,d}$ and $\lambda_T\mu^{kp^{n+1}}$ to zero for each $k\ge 1$, $S\subset \{1,2,\cdots, n+1\}-\{j\}$, $j\in \{1,2,\cdots ,n+1\}$, $0<d<p$ and $T\subset \{1,2,\cdots ,n+1\}$.  In other words, $\mathrm{can}|_{A_{01}\oplus A_{11}}=0$ 
and $\mathrm{can}|_{A_{00}\oplus A_{10}}=\mathrm{inc}_{A_{00}\oplus A_{10}}$ where $\textup{inc}_{A_{00}\oplus A_{10}}$ is the canonical inclusion. 
\end{corollary}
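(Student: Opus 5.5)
I will read off the $t$-Bockstein spectral sequence computing $\pi_*((\grmot^*\TC^{-}(\BP\langle n\rangle))/(p,v_1,\cdots,v_{n+1}))$. Its $\EE_2$-page is $\pi_*((\grmot^*\THH(\BP\langle n\rangle))/(p,v_1,\cdots,v_{n+1}))[t]$, which by \autoref{thm:map-to-fp-from-bpn} is $\bF_p\langle\lambda_1,\cdots,\lambda_{n+1}\rangle[\mu^{p^{n+1}},t]\langle\varepsilon_{n+1}\rangle$. I will regard it as the truncation to nonnegative Nygaard filtration of the $\TP$ spectral sequence of \autoref{prop:prismatic-bpn}, via the map $\mathrm{can}$ of \autoref{def:Nygaard-filt}.

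\textbf{Step 1: the differentials.} The differentials of the $\TC^{-}$ spectral sequence are the restrictions of those in \autoref{prop:prismatic-bpn}, with two kinds of exception.
\begin{enumerate}
\item A differential whose source lies in Nygaard filtration $\ge 0$ is still present.
\item A class whose $\TP$-differential originates in negative Nygaard filtration becomes a permanent cycle that is not hit.
\end{enumerate}
Concretely, the $d_1(x\varepsilon_{n+1})=xt\mu^{p^{n+1}}$ differentials kill every class divisible by $\varepsilon_{n+1}$, together with every $t^{k}\mu^{sp^{n+1}}$ with $k,s\ge1$. The classes that survive are:
\begin{itemize}
\item those in $\bF_p\langle\lambda_i\rangle[t]$ and $\bF_p\langle\lambda_i\rangle[\mu^{p^{n+1}}]\{\mu^{p^{n+1}}\}$ at $t^0$, where the latter lie in Nygaard filtration $0$;
\item the classes $x\varepsilon_{n+1}$, whose target $t\mu^{p^{n+1}}x$ is present, so they do not survive.
\end{itemize}
The key point is that $\mu^{sp^{n+1}}$ ($s\ge1$) in filtration $0$ survives, because its would-be source $t^{-1}\varepsilon_{n+1}\mu^{(s-1)p^{n+1}}$ is absent. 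In $\TP$ it is instead a boundary, so $\mathrm{can}$ kills it. This produces $A_{01}$.

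\textbf{Step 2: the $\lambda$-differentials.} On $\bF_p\langle\lambda_i\rangle[t]$ the differentials $d_{p^m}(t^{jp^{m-1}}\lambda_S)=t^{p^m+jp^{m-1}}\lambda_m\lambda_S$ run with nonnegative sources. They leave $\bF_p\langle\lambda\rangle$ in filtration $0$, which is $A_{00}$, and $\bF_p\langle\lambda\rangle[t^{p^{n+1}}]\{t^{p^{n+1}}\}$, which is $A_{10}$. By \autoref{Nyg filtration crossing diff}, the additional survivors are exactly the classes $t^{dp^{j-1}}\lambda_j\lambda_S$ with $0<d<p$ and $j\notin S$. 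In $\TP$ these are hit from negative filtration, namely from $t^{dp^{j-1}-p^j}\lambda_S$. In $\TC^{-}$ they are permanent cycles and not boundaries, and they lie in positive Nygaard filtration. Naming them $\lambda_S\Xi_{j,d}$ gives $A_{11}$.

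\textbf{Step 3: assembling the answer.} I will check that everything in positive filtration is either in $A_{10}$ or $A_{11}$, and that everything in filtration $0$ is either in $A_{00}$ or $A_{01}$. The computation of $\mathrm{can}$ follows by comparing with \autoref{prop:prismatic-bpn}:
\begin{itemize}
\item classes in $A_{00}\oplus A_{10}$ map to the surviving classes $\bF_p\langle\lambda\rangle[t^{\pm p^{n+1}}]$ with the same names;
\item classes in $A_{01}\oplus A_{11}$ map to boundaries in $\TP$, hence to zero modulo higher filtration.
\end{itemize}

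I expect the main obstacle to be upgrading the associated-graded statement to $\mathrm{can}|_{A_{01}\oplus A_{11}}=0$ on the nose, and to obtaining the direct sum splitting. A class of $A_{11}$ might map to something of higher Nygaard filtration in $\TP$. I would first rule this out by a degree and weight check: the only candidates are $\bF_p\langle\lambda\rangle t^{kp^{n+1}}$, and these have incompatible bidegrees. Failing that, I would use the $\bF_p\langle\lambda_1,\cdots,\lambda_{n+1}\rangle$-module structure to pick lifts that are honestly in $\ker(\mathrm{can})$. This choice also gives the splitting, since $\ker(\mathrm{can})=T=A_{11}\oplus A_{01}$ and $F\cong A_{00}\oplus A_{10}$ maps isomorphically onto a free module, so the resulting short exact sequences split.
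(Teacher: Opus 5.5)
Your proposal follows the paper's route. The paper regards the corollary as immediate from \autoref{prop:prismatic-bpn} and \autoref{Nyg filtration crossing diff}: it reads the $\TC^{-}$ spectral sequence as the nonnegative-Nygaard truncation of the $\TP$ one, exactly as you do. Your bookkeeping in Steps~1 and~2 is correct: the $d_1$-targets $\lambda_T\mu^{sp^{n+1}}$, whose sources are absent, give $A_{01}$, and the targets $t^{dp^{j-1}}\lambda_j\lambda_S$ of the crossing $d_{p^j}$-differentials give $A_{11}$.

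The step that fails is the bidegree check you propose first for $\mathrm{can}|_{A_{11}}=0$. The candidates $t^{kp^{n+1}}\lambda_{S'}$ do \emph{not} in general have incompatible bidegrees. For $n\ge 2$, the class $\lambda_1\Xi_{2,1}$ (detected by $\lambda_1t^{p}\lambda_2$, Nygaard filtration $p$) and $t^{p^{n+1}}\lambda_2\lambda_{n+1}$ (Nygaard filtration $p^{n+1}$) both have bidegree $(2p^2-2,2)$. For $n=2$ this is $t^{p^3}\lambda_2\lambda_3$. The vanishing is in fact a statement about the choice of lift: $\lambda_1\Xi_{2,1}+t^{p^3}\lambda_2\lambda_3$ is detected by the same $E_\infty$-class but has nonzero image under $\mathrm{can}$. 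So your fallback is not optional; it is the argument.

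Here is how the fallback goes.
\begin{enumerate}
\item By \cite[Corollary~3.2.6]{AKHW24}, the $\lambda_i$ lift to $\pi_*\grmot^*\TC(\MU)/p$. Hence they act compatibly on the $\TC^{-}$ and $\TP$ spectral sequences, and $\mathrm{can}$ is $\bF_p\langle\lambda_1,\ldots,\lambda_{n+1}\rangle$-linear.
\item On the module generators the bidegree check does work. Since $\mathrm{can}$ is filtered, $\mathrm{can}(\Xi_{j,d})$ would be a combination of classes $t^{kp^{n+1}}\lambda_a$ with $k\ge 1$. This forces $p^a-kp^{n+1}=p^j-dp^{j-1}>0$, which is impossible.
\item The only Adams-weight-zero class of $\TP$ in the degree of $\mu^{sp^{n+1}}$ is $t^{-sp^{n+1}}$, which lies in negative filtration. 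Hence $\mathrm{can}(\mu^{sp^{n+1}})=0$.
\item Define $\lambda_S\Xi_{j,d}$ and $\lambda_T\mu^{sp^{n+1}}$ as honest products; these are detected by the named classes. Linearity then gives $\mathrm{can}|_{A_{01}\oplus A_{11}}=0$ on the nose.
\end{enumerate}
With that choice of lifts, your splitting argument goes through.
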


We further need to compute the Frobenius map. We proceed by considering the map of algebraic $t$-Bockstein spectral sequences 
\[ 
\adjustbox{scale=.95,center}{\begin{tikzcd}
    \pi_*((\grmot^*\THH(\BP\langle n \rangle ))/(p,v_1,\cdots ,v_{n+1}))[t] \arrow[r,Rightarrow] \arrow[d,"{(-)[\mu^{-p^{n+1}}]}"] &  \pi_*((\grmot^*\TC^{-}(\BP\langle n \rangle ))/(p,v_1,\cdots ,v_{n+1}))\arrow[d,"\varphi"] \\
      \pi_*((\grmot^*\THH(\BP\langle n \rangle )^{tC_p})/(p,v_1,\cdots ,v_{n+1}))[t]  \arrow[r,Rightarrow] &  \pi_*((\grmot^*\TP(\BP\langle n \rangle ))/(p,v_1,\cdots ,v_{n+1})) \,.
    \end{tikzcd}
    }
\]
\begin{proposition}\label{prop:collapse-bpn}
The Frobenius map 
\[ 
    \varphi : \pi_*((\grmot^*\TC^{-}(\BP\langle n\rangle))/(p,v_1,\cdots ,v_{n+1}))\to  \pi_*((\grmot^*\TP(\BP\langle n\rangle))/(p,v_1,\cdots ,v_{n+1}))
\]
is given by $\varphi |_{A_{00}}=\textup{inc}_{A_{00}} $, $\varphi |_{A_{01}}$ is an isomorphism and $\varphi |_{A_{10}\oplus A_{11}}=0$,
where $\textup{inc}_{A_{00}}$ is the canonical inclusion. 
\end{proposition}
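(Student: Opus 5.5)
The plan is to compute $\varphi$ through the displayed map of algebraic $t$-Bockstein spectral sequences. On $\EE_2$-pages this map is the Hodge--Tate Frobenius of \autoref{thm:Hodge-Tate}, i.e. inversion of $\mu^{p^{n+1}}$, tensored with the identity on $t$. Then we read off what happens to each of the four summands of \autoref{cor:TC-BPn}. Throughout we use that $\varphi$ is a map of $\bF_p\langle\lambda_1,\cdots,\lambda_{n+1}\rangle$-modules; the $\lambda_i$ lift to $\MU$ and act compatibly, cf.~\cite[Corollary~3.2.6]{AKHW24}. So it suffices to determine $\varphi$ on module generators: $1$, $\mu^{kp^{n+1}}$, $t^{kp^{n+1}}$ and $\Xi_{j,d}$.

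\emph{The summand $A_{00}$.} The Frobenius is unital, so $\varphi(1)=1$. By $\lambda$-linearity, $\varphi(\lambda_S)=\lambda_S$ for every $S$. The element $\lambda_S$ is detected in Nygaard filtration zero in the target of \autoref{thm:Hodge-Tateresults}, with no indeterminacy. This gives $\varphi|_{A_{00}}=\mathrm{inc}_{A_{00}}$.

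\emph{The summand $A_{01}$.} The class $\lambda_T\mu^{kp^{n+1}}$ sits in Nygaard filtration zero, and on $\EE_2$-pages it maps to $\lambda_T\mu^{kp^{n+1}}$. By \autoref{thm:Hodge-Tateresults}, this image is a nonzero permanent cycle in Nygaard filtration zero of the target, since the target spectral sequence collapses onto filtration zero after the $d_1$. Hence it detects $\varphi(\lambda_T\mu^{kp^{n+1}})$, and $\varphi$ sends a basis of $A_{01}$ to the basis $\{\lambda_T\mu^{kp^{n+1}}:k\ge 1\}$ of the positive-$\mu$-power part of $\bF_p\langle\lambda_1,\cdots,\lambda_{n+1}\rangle[\mu^{\pm p^{n+1}}]$, up to higher-filtration error terms. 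Nygaard filtration is finite in each bidegree, so an upper-triangular argument then shows that $\varphi|_{A_{01}}$ is injective and is an isomorphism onto its image. I read the statement's ``isomorphism'' as meaning exactly this; the later computation of $\ker(\varphi-\mathrm{can})$ only needs that.

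\emph{The summands $A_{10}$ and $A_{11}$.} Classes in $A_{10}\oplus A_{11}$ are detected in positive Nygaard filtration, by $t^{kp^{n+1}}\lambda_T$ and by $t^{dp^{j-1}}\lambda_j\lambda_S$ respectively. The map of spectral sequences preserves Nygaard filtration. In the target, however, all permanent cycles lie in Nygaard filtration zero by \autoref{thm:Hodge-Tateresults}: the differential $d_1(\varepsilon_{n+1}x)=t\mu^{p^{n+1}}x$ kills every class divisible by $t$. The images of these classes therefore lie in positive filtration, where the abutment is zero, and so $\varphi$ vanishes on them. This is the conceptual reason the Frobenius kills everything divisible by $t$: after inverting $\mu$, $t$ becomes divisible by $v_{n+1}$.

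\emph{Main obstacle.} The hard step is making the filtration argument rigorous, i.e. checking that a class of Nygaard filtration $f\ge1$ has Frobenius image detected in filtration $\ge f$ of a spectral sequence whose $\EE_\infty$-page vanishes there. This requires convergence, meaning completeness of the Nygaard filtration on the target mod $(p,v_1,\cdots,v_{n+1})$. I would first try to get this from the finiteness of each bidegree and the collapse at $\EE_2$ with a single $d_1$ in \autoref{thm:Hodge-Tateresults}. If that is insufficient, the fallback is to compare with $\tau_{\ge0}L_n^f\BP$, where $\mu$ is a unit, as in the proof of \autoref{thm:Hodge-Tate}.
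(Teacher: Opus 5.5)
Your proposal is correct and follows the same route as the paper, whose proof is a one-line appeal to \autoref{thm:Hodge-Tateresults}, \autoref{cor:TC-BPn} and \cite[Corollary~3.2.6]{AKHW24}. You compare along the map of algebraic $t$-Bockstein spectral sequences, use that the target abutment lies entirely in Nygaard filtration zero, and handle $A_{00}$ via the $\lambda_i$ being equalized by $\mathrm{can}$ and $\varphi$; because the target has nothing in positive filtration, filtration-zero images are detected without indeterminacy and your upper-triangular step is unnecessary. On the convergence point you flag, bidegreewise finiteness fails on the target $\EE_2$-page since $\|t\mu\|=(0,0)$, but conditional convergence of the homotopy fixed point filtration together with degeneration after the single $d_1$ gives strong convergence, which is exactly what your vanishing argument needs.
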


\begin{proof}
This follows from \autoref{thm:Hodge-Tateresults} and \autoref{cor:TC-BPn} together with~\cite[Corollary 3.2.6]{AKHW24}, which implies that the classes in $\mathbb{F}_p\langle \lambda_1,\lambda_2,\cdots ,\lambda_{n+1} \rangle$ are equalized by $\mathrm{can}$ and $\varphi$.
\end{proof}
    
We now present our computation of syntomic cohomology of $\BP\langle n \rangle$. 
\begin{thm}\label{thm:syntomic-BPn}
Let $n\ge -1$ and let $\langle n+1\rangle =\{1,2,\cdots ,n+1\}$. The mod $(p,v_1,\dots ,v_{n+1})$-syntomic cohomology of $\BP\langle n\rangle$ is 
\begin{equation}\label{eq:syntomic-bpn-modvn}
\mathbb{F}_p\langle \partial,\lambda_1, \lambda_2,\cdots \lambda_{n+1}\rangle \oplus \bigoplus_{j=1}^{n+1} \mathbb{F}_p\langle \lambda_{s} : s\in \langle n+1\rangle -\{j\} \rangle \{ \Xi_{j,d} : 0 < d < p \} 
\end{equation}
as a $\mathbb{F}_p\langle \partial,\lambda_1,\lambda_2,\cdots ,\lambda_{n+1}\rangle $-module. In particular, it is a finite bigraded free $\bF_p$-module of dimension $2^{n+2}+2^n(n+1)(p-1)$ 
with generators concentrated in degrees $[-1,\sum_{i=1}^{n+1}2p^{i}-n-1]$ and Adams weights $[0,n+2]$. Here the classes are in bidegrees  $\|\partial\|=(-1,1)$, $\|\lambda_i\|=(2p^i-1,1)$,  $\|\Xi_{j,d}\|=(2p^j-1-2dp^{j-1},1)$ for $1\le i,j\le n+1$ and $0<d<p$.
\end{thm}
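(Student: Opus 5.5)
Since $\grmot^*\TC(\BP\langle n\rangle)$ is the equalizer of $\mathrm{can}$ and $\varphi$, I will use the long exact sequence attached to the fiber sequence
\[ \grmot^*\TC \to \grmot^*\TC^{-} \xrightarrow{\varphi-\mathrm{can}} \grmot^*\TP \]
after reducing mod $(p,v_1,\cdots,v_{n+1})$. On bigraded homotopy this gives a short exact sequence
\[ 0\to \Sigma^{-1,1}\mathrm{coker}(\varphi-\mathrm{can})\to \pi_*\grmot^*\TC/(p,\cdots,v_{n+1})\to \ker(\varphi-\mathrm{can})\to 0 . \]
The connecting map shifts degree by $-1$ and Adams weight by $+1$; this shift is where $\|\partial\|=(-1,1)$ comes from. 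The computation therefore reduces to finding the kernel and cokernel of $\varphi-\mathrm{can}$. For this I would use the decomposition $A_{00}\oplus A_{10}\oplus A_{01}\oplus A_{11}$ of \autoref{cor:TC-BPn}, the formula for $\mathrm{can}$ in that corollary, and the formula for $\varphi$ in \autoref{prop:collapse-bpn}. The target is identified by \autoref{prop:prismatic-bpn} as $\bF_p\langle\lambda_1,\cdots,\lambda_{n+1}\rangle[t^{\pm p^{n+1}}]$.

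First, the map $\varphi-\mathrm{can}$ behaves as follows on each summand.
\begin{itemize}
\item On $A_{00}$ both $\mathrm{can}$ and $\varphi$ are the inclusion, so $\varphi-\mathrm{can}$ vanishes and all of $A_{00}=\bF_p\langle\lambda_1,\cdots,\lambda_{n+1}\rangle$ lies in the kernel.
\item On $A_{11}$ both maps vanish, so all of $A_{11}$, that is the classes $\lambda_S\Xi_{j,d}$, lies in the kernel.
\item On $A_{10}=\bF_p\langle\lambda\rangle[t^{p^{n+1}}]\{t^{p^{n+1}}\}$ the difference is $-\mathrm{can}$, which is injective onto the positive $t$-powers of the target.
\item On $A_{01}$ the difference is $\varphi$, which is an isomorphism onto the negative $t$-powers $\bF_p\langle\lambda\rangle[t^{-p^{n+1}}]\{t^{-p^{n+1}}\}$. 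Here I am using that $\mu^{p^{n+1}}$ corresponds to $t^{-p^{n+1}}$ in $\TP$, as in \autoref{thm:Hodge-Tateresults}.
\end{itemize}
These images sit in disjoint bidegrees, so $\ker(\varphi-\mathrm{can})=A_{00}\oplus A_{11}$. The cokernel is the $t^0$ part of the target, namely $\bF_p\langle\lambda_1,\cdots,\lambda_{n+1}\rangle\cdot 1$. Under the connecting map this cokernel contributes $\partial\cdot\bF_p\langle\lambda_1,\cdots,\lambda_{n+1}\rangle$. Putting the pieces together gives $\bF_p\langle\partial,\lambda_1,\cdots,\lambda_{n+1}\rangle\oplus A_{11}$, which is \eqref{eq:syntomic-bpn-modvn} as bigraded $\bF_p$-vector spaces.

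For the module structure, the classes $\lambda_i$ lift to the syntomic cohomology of $\MU$ (or of $\BP$) by \cite[Corollary 3.2.6]{AKHW24}, so they act on the whole long exact sequence. The connecting map is linear over these classes, so $\partial\lambda_S$ is the image of $\lambda_S$ and the exterior structure on $\partial$ and the $\lambda_i$ follows. The main obstacle is ruling out nontrivial extensions in the short exact sequence as modules over $\bF_p\langle\partial,\lambda_1,\cdots,\lambda_{n+1}\rangle$. Two kinds need checking.
\begin{itemize}
\item A product $\lambda_j\cdot\lambda_S\Xi_{j,d}$ could be nonzero, for example landing on a $\partial$-class. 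To rule this out I would compare bidegrees, and I would use that $\lambda_j\Xi_{j,d}$ is detected by $t^{dp^{j-1}}\lambda_j^2=0$.
\item The product $\partial\cdot\partial$ and the products of $\partial$ with the $\Xi$'s are forced to land in Adams weight $\ge 2$ in particular degrees. I would check, by a bidegree count using that $\|\Xi_{j,d}\|$ has Adams weight $1$, that these products vanish or are consistent with the stated module structure.
\end{itemize}

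Finally, the dimension count. The exterior part $\bF_p\langle\partial,\lambda_1,\cdots,\lambda_{n+1}\rangle$ has dimension $2^{n+2}$. For each $j$, the summand $\bF_p\langle\lambda_s: s\ne j\rangle\{\Xi_{j,d}:0<d<p\}$ has dimension $2^n(p-1)$, and summing over the $n+1$ values of $j$ gives $2^n(n+1)(p-1)$. The degree and Adams-weight ranges then follow by reading off the extreme classes. The minimum degree is $-1$, attained by $\partial$. The maximum degree comes from $\partial\lambda_1\cdots\lambda_{n+1}$, whose degree is $\sum_{i=1}^{n+1}(2p^i-1)-1=\sum_{i=1}^{n+1}2p^i-n-2$. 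The Adams weight runs from $0$ for the unit up to $n+2$ for $\partial\lambda_1\cdots\lambda_{n+1}$.
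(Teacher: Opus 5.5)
Your main line is exactly the paper's proof. You compute $\ker(\varphi-\mathrm{can})=A_{00}\oplus A_{11}$ and $\mathrm{coker}(\varphi-\mathrm{can})\cong A_{00}$ from \autoref{cor:TC-BPn} and \autoref{prop:collapse-bpn}, then feed them into the long exact sequence of $\grmot^*\TC\to\grmot^*\TC^{-}\to\grmot^*\TP$. Your dimension count is also right. One concrete error: since $|\partial|=-1$, the top-degree class is not $\partial\lambda_1\cdots\lambda_{n+1}$. It is $\lambda_1\cdots\lambda_{n+1}$, in degree $\sum_{i=1}^{n+1}(2p^i-1)=\sum_{i=1}^{n+1}2p^i-n-1$, which is the bound in the statement. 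Your value $\sum 2p^i-n-2$ contradicts the theorem. The $\Xi$-summands lie strictly below this bound, since $2dp^{j-1}>0$.

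The more substantive issue is your plan to exclude module extensions by bidegree counting, which cannot work in general. For $j\ge 2$ we have $\|\Xi_{j,p-1}\|=(2p^{j-1}-1,1)=\|\lambda_{j-1}\|$, which produces three coincidences.
\begin{enumerate}
\item $\partial\,\Xi_{j,p-1}$ sits in the bidegree of $\partial\lambda_{j-1}$. Degrees cannot rule this out. What does: pairing the fiber sequence for $\MU$ with the one for $\BP\langle n\rangle$ gives $\partial\cdot x=\pm\delta(\mathrm{can}(x))$ for the connecting map $\delta$, and $\mathrm{can}(\Xi_{j,d})=0$ because $A_{11}\subset\ker(\mathrm{can})$.
\item $\lambda_j\Xi_{j,p-1}$ sits in the bidegree of $\lambda_{j-1}\lambda_j$. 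This is excluded because $\mathrm{can}$ is $\lambda$-linear, so $\lambda_j\Xi_{j,p-1}$ maps into $\ker(\mathrm{can})$ while $\lambda_{j-1}\lambda_j$ does not.
\item For $j\le n$, $\lambda_j\Xi_{j,p-1}$ also sits in the bidegree of $\lambda_{j-1}\Xi_{j+1,p-1}$. This is not excluded by degrees, nor by your remark that $t^{(p-1)p^{j-1}}\lambda_j^2=0$. That remark only pushes $\lambda_j\Xi_{j,p-1}$ into Nygaard filtration $>(p-1)p^{j-1}$. But $\lambda_{j-1}\Xi_{j+1,p-1}$ is detected by $\lambda_{j-1}t^{(p-1)p^{j}}\lambda_{j+1}$, in the higher filtration $(p-1)p^j$.
\end{enumerate}
In the paper, the third point is part of the $\bF_p\langle\lambda_1,\cdots,\lambda_{n+1}\rangle$-module structure of $\TC^-$ asserted in \autoref{cor:TC-BPn}, which is read off from the $t$-Bockstein $\EE_\infty$-page. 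The proof of the theorem itself only adds a one-line appeal to that module structure. So you should either take this explicitly from \autoref{cor:TC-BPn} or supply a genuine argument for it, rather than rely on a bidegree count.
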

\begin{proof}
Let $A_{00}$, $A_{10}$, $A_{01}$ and $A_{11}$ be defined as in \eqref{a00}--\eqref{a11}. We determined $\mathrm{can}|_{A_{01}\oplus A_{11}}=0$ 
and $\mathrm{can}|_{A_{00}\oplus A_{10}}=\mathrm{inc}_{A_{00}\oplus A_{10}}$
in \autoref{cor:TC-BPn}. We determined $\varphi |_{A_{00}}=\textup{inc}_{A_{00}} $, $\varphi |_{A_{01}}$ is an isomorphism and $\varphi |_{A_{10}\oplus A_{11}}=0$ in \autoref{prop:collapse-bpn}. We therefore conclude that $\ker(\varphi-\can)=A_{00}\oplus A_{11}$ and $\coker(\varphi-\can)=A_{00}$. We write $\partial$ to indicate 
that classes that are multiples $\partial$ of come from the boundary map in the long exact sequence mod $(p,v_1,\cdots ,v_{n+1})$ induced by the fiber sequence 
\[
\grmot^*\TC(\BP\langle n\rangle)\to\grmot^*\TC^{-}(\BP\langle n\rangle) \overset{\varphi-\can}{\longrightarrow} \grmot^*\TP(\BP\langle n\rangle) \,.
\]
Note that we can use $\mathbb{F}_p\langle \lambda_1,\lambda_2,\cdots ,\lambda_{n+1}\rangle$-module structure to determine that $\pi_*\grmot^*\TC(\BP\langle n\rangle)$ splits as a $\mathbb{F}_p\langle \lambda_1,\lambda_2,\cdots ,\lambda_{n+1}\rangle$-module. 
We present our answer using the notation appearing in~\cite[Theorem~1.1]{AKACHR25}. 
\end{proof}
\begin{corollary}\label{mod-(p ,...,v_n) syntomic}
Let $n\ge -1$ and let $\langle n+1\rangle=\{1,2,\cdots ,n+1\}$. 
The mod $(p,v_1,\dots ,v_{n})$ syntomic cohomology of $\BP\langle n\rangle$ is 
\begin{equation}\label{eq:syntomic-bpn}
\mathbb{F}_{p}[v_{n+1}] \otimes \left (\mathbb{F}_p\langle \partial ,\lambda_1,\lambda_2, \cdots, \lambda_{n+1}\rangle \oplus \bigoplus_{j=1}^{n+1} \mathbb{F}_p\langle \lambda_{s} : \langle n+1\rangle -\{j\} \rangle \{ \Xi_{j,d}: 0 < d < p \} \right ) \,.
\end{equation}
In particular, it is a bigraded free $\bF_p[v_{n+1}]$-module of dimension $2^{n+2}+2^n(n+1)(p-1)$ with generators concentrated in degrees $[-1,\sum_{i=1}^{n+1}2p^{i}-n-1]$ 
and Adams weights $[0,n+2]$. Here the classes are in bidegrees  $\|v_{n+1}\|=(2p^{n+1}-2,0)$, $\|\partial\|=(-1,1)$, $\|\lambda_i\|=(2p^i-1,1)$,  $\|\Xi_{j,d}\|=(2p^j-1-2dp^{j-1},1)$ for $1\le i,j\le n+1$ and $0<d<p$.
\end{corollary}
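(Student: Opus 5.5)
The plan is to deduce the mod $(p,v_1,\dots,v_n)$ answer from the mod $(p,v_1,\dots,v_{n+1})$ answer of \autoref{thm:syntomic-BPn} using the $v_{n+1}$-Bockstein. By \autoref{rem:iterated-cofiber} there is a self-map $v_{n+1}$ of $\grev^*\mathbb{S}/(p,v_1,\dots,v_n)$ of bidegree $(2p^{n+1}-2,0)$, whose cofiber is $\grev^*\mathbb{S}/(p,v_1,\dots,v_{n+1})$. Write $X=\grmot^*\TC(\BP\langle n\rangle)/(p,v_1,\dots,v_n)$. Tensoring the cofiber sequence with $\grmot^*\TC(\BP\langle n\rangle)_p^{\wedge}$ (using the consistency of notations in \autoref{all-defin-same}) gives a cofiber sequence
\[
\Sigma^{2p^{n+1}-2}X\xrightarrow{v_{n+1}}X\to X/v_{n+1}.
\]
Hence there is a long exact sequence in bigraded homotopy, and a $v_{n+1}$-Bockstein spectral sequence
\[
\pi_*(X/v_{n+1})[v_{n+1}]\Rightarrow \pi_*X .
\]

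The key step is to show that this Bockstein spectral sequence collapses, i.e. that $v_{n+1}$ acts injectively on $\pi_*X$. I would argue by weight and degree. The $\EE_1$-page is $\mathbb{F}_p[v_{n+1}]$ tensored with the finite module \eqref{eq:syntomic-bpn-modvn}. A Bockstein differential $d_r$ sends a class $x$ to $v_{n+1}^r y$, lowers degree by one, and (since $v_{n+1}$ has Adams weight $0$) raises Adams weight by one. By \autoref{thm:syntomic-BPn}, generators live in Adams weights $[0,n+2]$ and degrees $[-1,\sum 2p^i-n-1]$. Comparing a generator in Adams weight $w$ with generators in Adams weight $w+1$, I would need to rule out the degree coincidence
\[
|x|-1=|y|+r(2p^{n+1}-2),\qquad r\ge1 .
\]

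This bookkeeping is the main obstacle, since $\Xi_{j,d}$ and $\partial$ scatter degrees. I expect it to be cleanest to avoid full case analysis by an alternative route. Run the same argument as in \S\ref{sec:syntomic} directly mod $(p,\dots,v_n)$. There $\pi_*\grmot^*\TC^{-}$ and $\pi_*\grmot^*\TP$ are computed by the $t$-Bockstein spectral sequences with $\EE_2$-page $\pi_*\grmot^*\THH(\BP\langle n\rangle;\bF_p)[t^{\pm1}]$, using \autoref{prop:agreement-of-quotients-BPn} and \autoref{prop:thh-bpn}. These are free over $\bF_p[v_{n+1}]$ with $v_{n+1}$ detected by $t\mu^{p^{n+1}}$, and the map $\varphi-\mathrm{can}$ is $v_{n+1}$-linear.

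One then checks that the kernel and cokernel of $\varphi-\mathrm{can}$ are $v_{n+1}$-torsion free. Reducing mod $v_{n+1}$ must recover \autoref{prop:collapse-bpn} and \autoref{cor:TC-BPn}, which forces freeness with the stated basis. Concretely, a torsion class would produce, in the long exact sequence above, an extra class in $\pi_*(X/v_{n+1})$ beyond those coming from a free module. A dimension count against $2^{n+2}+2^n(n+1)(p-1)$ then excludes this, provided one knows $\pi_*X$ in each bidegree is at least as large as the free answer. That lower bound is the part I would verify by the direct $v_{n+1}$-linear computation of $\ker$ and $\coker$.

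Once injectivity is established, the long exact sequence splits into short exact sequences
\[
0\to \Sigma^{2p^{n+1}-2}\pi_*X\to\pi_*X\to\pi_*(X/v_{n+1})\to0 .
\]
Choosing lifts of the generators $\partial$, $\lambda_i$, and $\lambda_S\Xi_{j,d}$ to $\pi_*X$, these short exact sequences show by a bounded-below induction on degree that $\pi_*X$ is free over $\bF_p[v_{n+1}]$ on those lifts. This yields \eqref{eq:syntomic-bpn}, and the ranks and bidegree ranges transfer directly from \autoref{thm:syntomic-BPn}.
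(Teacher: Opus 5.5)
Your setup matches the paper's: the paper also runs the $v_{n+1}$-Bockstein spectral sequence off \autoref{thm:syntomic-BPn}, uses that differentials raise Adams weight by exactly one, and reduces to ruling out degree coincidences. The gap is that you never rule them out, and the fallback route does not close it.

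\begin{itemize}
\item The inference ``reducing mod $v_{n+1}$ must recover \autoref{cor:TC-BPn} and \autoref{prop:collapse-bpn}, which forces freeness'' is a non sequitur. The mod $v_{n+1}$ answer puts no constraint on $v_{n+1}$-torsion; that torsion is exactly what the Bockstein differentials measure.
\item The claimed freeness of $\pi_*\grmot^*\TC^{-}$ and $\pi_*\grmot^*\TP$ mod $(p,\dots,v_n)$ over $\bF_p[v_{n+1}]$ is unproved. Freeness of the $t$-Bockstein $\EE_2$-page is not automatically inherited by the abutment. Without $\varepsilon_{n+1}$ the analysis of \S4 does not apply.
\item Even granting freeness, the cokernel of a $v_{n+1}$-linear map between free modules can have torsion.
\item The lower bound you propose to ``verify by the direct computation'' is equivalent to the collapse you are trying to prove. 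The Bockstein spectral sequence already gives the upper bound, so as written the argument is circular.
\end{itemize}

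The missing idea, which makes the direct bookkeeping you feared almost trivial, is linearity. The classes $\partial,\lambda_1,\dots,\lambda_{n+1}$ come from $\pi_*\grmot^*\TC(\MU)/p$ (\cite[Definition~3.2.4, Lemma~3.2.5]{AKHW24}), and $\grmot^*\TC(\BP\langle n\rangle)$ is a module over $\grmot^*\TC(\MU)$. Hence the Bockstein differentials are linear over $\bF_p\langle\partial,\lambda_1,\dots,\lambda_{n+1}\rangle$.

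As a module over $\bF_p[v_{n+1}]\langle\partial,\lambda_1,\dots,\lambda_{n+1}\rangle$, the $\EE_1$-page is generated by $1$ (Adams weight $0$) and the $\Xi_{j,d}$ (Adams weight $1$). So only these need checking.
\begin{itemize}
\item A differential on $1$ would hit $v_{n+1}^r y$ with $y$ in degree $<-1$, below every class.
\item A differential $d_r(\Xi_{j,d})=v_{n+1}^r y$ needs $y$ in Adams weight $2$ with
\[
|y| = |\Xi_{j,d}|-1-r(2p^{n+1}-2) \le (2p^{n+1}-2p^n-1)-1-(2p^{n+1}-2) = -2p^n .
\]
But the lowest-degree class in Adams weight $2$ is $\partial\lambda_1$, in degree $2p-2>0$.
\end{itemize}
So every class is a permanent cycle and $v_{n+1}$ acts injectively. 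Your final paragraph then completes the proof.
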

\begin{proof} 
First, note that the $v_{n+1}$-Bockstein differentials raise Adams weight by exactly $1$ so it suffices to consider one Adams weight at a time. 
We then observe that $\mathbb{F}_p\langle \partial ,\lambda_1,\lambda_2,\cdots ,\lambda_{n+1}\rangle$ is a subring of $\pi_*\TC(\MU)/p$ by \cite[Definition~3.2.4, Lemma~3.2.5.]{AKHW24} so the differentials are $\partial, \lambda_1,\lambda_2,\cdots ,\lambda_{n+1}$-linear. 
It therefore suffices to show that there are no $v_n$-Bockstein differentials on the generators of the $\mathrm{E}_1$-page of the $v_{n+1}$-Bockstein spectral sequence as a $\mathbb{F}_p[v_{n+1}]\langle \partial ,\lambda_1,\lambda_2,\cdots ,\lambda_{n+1}\rangle$-module. 

The only generator in Adams weight $0$ is $1$ and this cannot be the source of a $v_{n+1} $-Bockstein differential because the $\mathbb{F}_p[v_{n+1}]$-module generators of the $\mathrm{E}_1$-page of the $v_{n+1}$-Bockstein spectral sequence are concentrated in degrees $[-1,\sum_{i=1}^{n+1}2p^{i}-n-1]$ by \autoref{thm:syntomic-BPn}. This handles the case $n=-1$. 

Let $n\ge 0$. The generators in Adams weight $1$ are $\Xi_{j,d}$ for $0<d<p$ and $1\le j\le n+1$. The classes in Adams weight $2$ are $\partial \lambda_k$ for $k\ge 1$, $\lambda_i\lambda_j$ for $i\ne j\in \{1,\cdots ,n+1\}$, and $\lambda_s\Xi_{j,d}$ for $s\in \{1,2,\cdots, n+1\}-\{j\}$, $0<j<p$, and $0<d<p$. The class of lowest degree in Adams weight $2$ is $\partial \lambda_1$ in degree $2p-2$ and the class in highest degree in Adams weight $1$, which is a generator as a $\mathbb{F}_p\langle \partial ,\lambda_1,\lambda_2,\cdots ,\lambda_{n+1}\rangle$-module, is $\Xi_{n+1,1}$ in degree $2p^{n+1}-2p^n-1$, so since 
\[ 
 |\partial \lambda_1|+|v_{n+1}|+1=   2p-2+2p^{n+1}-2+1>2p^{n+1}-2p^n-1
\]
for all primes $p$ and all $n\ge 0$. This handles all the $\mathbb{F}_p[v_{n+1}]\langle \partial, \lambda_1,\cdots ,\lambda_{n+1}\rangle$-module generators and therefore the $v_{n+1}$, $\partial$, $\lambda_1$, $\cdots$ ,$\lambda_{n+1}$-linearity proves the claim.  
\end{proof}

 \section{Algebraic K-theory}\label{sec:k-theory}
In this section, we compute the mod $(p,v_1,v_2)$-algebraic K-theory of $\BP\langle 2\rangle$ at primes $p\ge 5$ and the mod $(p,v_1,v_2,v_3)$-algebraic K-theory of $\BP\langle 3\rangle$ at primes $p\ge 7$. 
We then prove the Lichtenbaum---Quillen property for algebraic K-theory of $\BP\langle n\rangle$. 
Finally, we prove that the telescope conjecture holds for the algebraic K-theory of $\BP\langle n\rangle$. A key tool will be the \emph{motivic spectral sequence}
\[
\mathrm{E}_2^{s,2w-s}=\pi_s((\grmot^w \TC(\BP \langle n \rangle))/(p,\cdots ,v_n))\implies \pi_s(\TC(\BP \langle n \rangle)/(p,v_1,\cdots ,v_n)) \,,
\]
which exists when $n=0$, $n=1$ and $p\ge 3$, $n=2$ and $p\ge 5$ and $n=3$ and $p\ge 7$ in light of \autoref{all-defin-same}. This is the spectral sequence associated to the filtered spectrum 
\[\mathrm{fil}_{\mot}^*\TC(\BP \langle n \rangle)_p^{\wedge}\otimes_{\mathrm{fil}_{\textup{ev}}^*\mathbb{S}} \mathrm{fil}_{\textup{ev}/\mathbb{S}}^*(\mathbb{S}/(p,v_1,\cdots,v_n)) \]
and it conditionally converges by~\cite[Corollary~A.2.9]{HRW22}. Recall that $s$ is called the \emph{degree}, $w$ is called the \emph{weight} and $2w-s$ is called the Adams weight. With the choice of coordinates $(s,2w-s)$ given by (degree, Adams weight), the differential convention of the spectral sequence is the standard Adams convention
\[ 
	d_r:\mathrm{E}_2^{s,2w-s} \to \mathrm{E}_2^{s-1,2w-s+r}\,.
\]

\subsection{Algebraic K-theory at low heights}
We provide some applications of our work to computations of algebraic K-theory of $\BP\langle 2\rangle$ at primes $p\ge 5$ and $\BP\langle 3\rangle$ at primes $p\ge 7$.

\begin{thm}\label{TCBP2}
Let $p\ge 5$. Then $\pi_{*}(\TC(\BP \langle 2 \rangle)/(p,v_1,v_2))$ is isomorphic to 
\[ 
\bF_{p}[v_{3}]\langle  \partial ,\lambda_1, \lambda_{2}, \lambda_{3}\rangle\oplus \bigoplus_{j=1}^{3} \mathbb{F}_p[v_3]\langle \lambda_{s} : 1\le s\le 3, s\ne j
\rangle \{ \Xi_{j,d} : 0 < d < p\} 
\]
as $\mathbb{F}_p[v_3^d]\langle  \partial ,\lambda_1, \lambda_{2}, \lambda_{3}\rangle$-modules for some integer $d\ge 1$. Here $|v_3|=2p^{3}-2$, $|\partial|=-1$, $|\lambda_i|=2p^i-1$ and $|\Xi_{j,d}|=2p^j-1-2dp^{j-1}$ for all $1\le i,j\le n+1$ and $0<d<p$. 
\end{thm}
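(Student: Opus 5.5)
The plan is to run the motivic spectral sequence
\[
\mathrm{E}_2^{s,2w-s}=\pi_s((\grmot^w \TC(\BP \langle 2 \rangle))/(p,v_1,v_2))\implies \pi_s(\TC(\BP \langle 2 \rangle)/(p,v_1,v_2)),
\]
which exists for $p\ge 5$ by \autoref{all-defin-same}, since $\mathbb{S}/(p,v_1,v_2)$ exists there. Its $\mathrm{E}_2$-page is identified by \autoref{mod-(p ,...,v_n) syntomic} with $n=2$. The argument then has two parts: show that the spectral sequence collapses at $\mathrm{E}_2$, and then resolve the multiplicative extensions well enough to obtain the stated module structure.

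For the collapse, I first use sparseness in Adams weight. By \autoref{mod-(p ,...,v_n) syntomic}, the $\bF_p[v_3]$-module generators lie in Adams weights $[0,4]$, and $v_3$ has Adams weight $0$. Hence a differential $d_r$ raises Adams weight by $r\ge 2$ and only $d_2,d_3,d_4$ can be nonzero.

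Next I would lift the classes $\partial,\lambda_1,\lambda_2,\lambda_3$ to permanent cycles. They come from $\pi_*\TC(\MU)/p$ via \cite[Definition~3.2.4, Lemma~3.2.5]{AKHW24}, and $\BP\langle 2\rangle$ is an $\MU$-algebra, so they act on the spectral sequence. The ambiguity is that $v_3$ itself need not be a permanent cycle in the abutment, because $v_3$ acts only through some power $v_3^{d}$ of a genuine self-map of $\mathbb{S}/(p,v_1,v_2)$. This is why the statement only claims a module structure over $\bF_p[v_3^d]$.

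With linearity over $\bF_p[v_3^d]\langle\partial,\lambda_1,\lambda_2,\lambda_3\rangle$ in hand, it suffices to check that there are no differentials on the finitely many generators $1$, $\Xi_{j,d}$, and their products with $\lambda_s$. This is a degree count.

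A differential from Adams weight $a$ to weight $a+r$ lowers stem by one. I compare the degree range of the generators in weight $a$ with the degrees of the targets in weight $a+r$, allowing for arbitrary $v_3$-multiples. For example, the lowest weight-$2$ class is $\partial\lambda_1$ in degree $2p-2$, while the weight-$0$ class $1$ is in degree $0$. The lowest-degree classes in weights $3$ and $4$ are $\partial\lambda_1\lambda_2$ and $\partial\lambda_1\lambda_2\lambda_3$. The degree gaps should rule out all $d_2,d_3,d_4$, possibly using $p\ge 5$ to separate degrees modulo $|v_3|=2p^3-2$.

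This bookkeeping is the main obstacle. The sources include every $v_3^k$-multiple, so the check must really be done modulo $2p^3-2$, and a target $\partial\lambda_S$ or $\lambda_S\Xi_{j,d}$ can become congruent to a source shifted by one. I would list the degrees of all generators modulo $2p^3-2$ in each Adams weight and verify that no source is congruent to $(\text{target}+1)$ across weights differing by $2$, $3$ or $4$. The $\partial$- and $\lambda$-linearity reduces this to the generators $1$ and $\Xi_{j,d}$ of weights $0$ and $1$, together with a few of their products. If a coincidence appears at $p=5$, I would instead use naturality along $\BP\langle 2\rangle\to\bZ_p$, or the comparison with $\TC(\MU)$, to show that the relevant class maps injectively to a spectral sequence known to collapse.

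After collapse, the associated graded is free over $\bF_p[v_3]$ with the stated generators. Choosing lifts of the generators, together with the $\bF_p[v_3^d]\langle\partial,\lambda_1,\lambda_2,\lambda_3\rangle$-action, gives the claimed isomorphism of modules. No extension problems arise, because the $\mathrm{E}_\infty$-page is free over $\bF_p[v_3^d]\langle\partial,\lambda_i\rangle$ on the listed generators, so lifts can be chosen freely.
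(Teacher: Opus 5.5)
Your collapse argument is essentially the paper's: identify $\mathrm{E}_2$ with the mod $(p,v_1,v_2)$ syntomic cohomology and rule out every differential by counting degrees modulo $|v_3|=2p^3-2$.

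You can shrink the bookkeeping as the paper does. Every class in Adams weight $a$ has degree $\equiv a \pmod 2$, and $d_r$ lowers degree by one while raising weight by $r$, so $r$ must be odd. The only candidates are therefore $d_3$ from the $0$-line to the $3$-line and $d_3$ from the $1$-line to the $4$-line. The first would need a weight-$3$ class in degree $\equiv -1$. The second would need a weight-$1$ source in degree $\equiv 2p^2+2p-1$, which is one more than $|\partial\lambda_1\lambda_2\lambda_3|$ reduced modulo $2p^3-2$. Checking the listed degrees shows neither occurs for any $p$, so no fallback is needed; $p\ge 5$ enters only through the existence of $\mathbb{S}/(p,v_1,v_2)$.

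The step that fails as written is the last one. The $\mathrm{E}_\infty$-page is free over $\mathbb{F}_p[v_3^d]$, but it is not free over $\mathbb{F}_p[v_3^d]\langle\partial,\lambda_1,\lambda_2,\lambda_3\rangle$, because $\partial$ and $\lambda_j$ annihilate $\Xi_{j,e}$. To get the claimed module structure you must show that lifts of $v_3^m\Xi_{j,e}$ can be chosen so that $\partial$ and $\lambda_j$ act on them by zero. Degrees alone do not give this. Since $|\Xi_{2,p-1}|=2p-1$, the product $\lambda_2\cdot v_3\Xi_{2,p-1}$ (weight $2$) and $\partial\lambda_1\lambda_2\lambda_3$ (weight $4$) both lie in degree $2p^3+2p^2+2p-4$, so a hidden extension is possible.

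This can be repaired. The class $\partial\lambda_1\lambda_3$ has the same degree $2p^3+2p-3$ as $v_3\Xi_{2,p-1}$ but weight $3$. Altering the lift of $v_3^m\Xi_{2,p-1}$ by a multiple of $\partial\lambda_1\lambda_3 v_3^{m-1}$ therefore kills the $\lambda_2$-action on it, and does not change the $\partial$-, $\lambda_1$- or $\lambda_3$-actions. A parity-and-degree check shows this is the only possible hidden $\partial$- or $\lambda_j$-extension on the $\Xi$-summands. The paper's own proof justifies only the $\mathbb{F}_p[v_3^d]$-linearity and does not discuss this point, so you would have to supply the argument yourself.
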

\begin{proof}
For simplicity, we display the mod $(2,v_1,v_2,v_3)$-syntomic cohomology of $\BP\langle 2\rangle$ in \autoref{fig:syntomicBP2}, 
but the reader should be able to extrapolate the necessary information from this picture to verify the statements in this proof. We will refer to classes in Adams weight $m$ as classes on the $m$-line, since this is how we depict the spectral sequence, see \autoref{fig:syntomicBP2} for example. 
Since the motivic spectral sequence is concentrated on lines $0$, $1$, $2$, $3$ and $4$ the only possible motivic differential is one 
of length $3$ from $0$-line to the $3$-line or the $1$-line to $4$-line. For differentials from the $0$-line to the $3$-line, the only possible differentials 
have source $v_{3}^{k}$, but the targets are bidegree $((2p^{3}-2)k,3)$ and the mod $(p,v_1,v_2)$-syntomic cohomology of $\BP\langle 2\rangle$ 
is trivial in these bidegrees. Similarly, the only possible nontrivial targets of a differential of length $3$ from the $1$-line to the $4$-line are 
$\lambda_{1}\lambda_{2}\lambda_{3}v_{3}^{k}$, but there is a gap between $(2p^{3}+2p^{2}+2p-1,1)$ and $(2p^{3}+4p^{2}+2p-1,1)$ 
so there could not be any differential hitting $\delta \lambda_{1}\lambda_{2}\lambda_{3}$. This gap persists in positive dimensions 
mod $2p^{3}-2$, so there are no possible differentials hitting $\lambda_{1}\lambda_{2}\lambda_{3}v_{3}^{k}$. 
Since we can view $\TC(\BP\langle 2\rangle)/(p,v_1,v_2)$ as a  $\textup{End}(\mathbb{S}/(p,v_1,v_2))$-module, 
which has a $v_3^d$-self map for some $d\ge 1$ and $\pi_0\textup{End}(\mathbb{S}/(p,v_1,v_2))=\mathbb{F}_p$, 
this identification is an identification of $\mathbb{F}_p[v_3^d]$-modules. 
Here, we write $\Xi_{j,d}$ for the class detected by $\Xi_{j,d}$ without indeterminacy in the motivic spectral sequence. 
\end{proof}

\begin{cor}\label{Hurewicz-BP2}
The classes $\alpha_1$, $\beta_1'$, and $\gamma_1^{''}$ in $V(2)_*$ map under the unit map 
\[ \mathbb{S}\to \mathrm{TC}(\mathrm{BP}\langle 2\rangle) \]
to the classes $\Xi_{1,1}$, $\Xi_{2,1}$ and $\Xi_{3.1}$, respectively. 
\end{cor}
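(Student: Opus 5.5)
The plan is to reduce the statement to a compatibility of unit maps in syntomic cohomology, and then use Adams--Novikov (even filtration) detection. The unit map $\mathbb{S}\to \TC(\BP\langle 2\rangle)$ refines to a map of filtered spectra $\fil_{\ev}^*\mathbb{S}\to \filmot^*\TC(\BP\langle 2\rangle)_p^{\wedge}$. Smashing with $\fil_{\ev/\mathbb{S}}^*(\mathbb{S}/(p,v_1,v_2))$, which exists for $p\ge 5$, gives a map from the Adams--Novikov spectral sequence for $V(2)$ to the motivic spectral sequence of \autoref{TCBP2}. The classes $\alpha_1,\beta_1',\gamma_1''$ are detected in Adams weight $1$ of the ANSS for $V(2)$, i.e. in $\Ext^1_{\BP_*\BP}(\BP_*,\BP_*/(p,v_1,v_2))$. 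Up to units, they are the classes $h_{1,0}=[t_1]$, $h_{2,0}=[t_2]$ and $h_{3,0}=[t_3]$, in degrees $2p-3$, $2p^2-3$ and $2p^3-3$. These degrees agree with $|\Xi_{1,1}|=2p-3$, $|\Xi_{2,1}|=2p^2-2p-1$ and $|\Xi_{3,1}|=2p^3-2p^2-1$ only for $j=1$. So I must first recheck the identification. The relevant classes should instead be $\beta_1'$ in $V(1)$ and $\gamma_1''$ in $V(2)$, which are the Greek-letter elements of degree $2p^2-2p-1$ and $2p^3-2p^2-1$ respectively: $\beta_1'=[t_1^p]$ and $\gamma_1''=[t_1^{p^2}]$, i.e. $h_{1,1}$ and $h_{1,2}$ mod $(p,v_1,v_2)$. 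Indeed $|t_1^{p^{j-1}}|-1=2p^{j-1}(p-1)-1=|\Xi_{j,1}|$. This degree match is the starting observation.

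Next, I would show that the map on Adams weight $1$ of the $\EE_2$-pages sends $h_{1,j-1}$ to a nonzero multiple of $\Xi_{j,1}$. In bidegree $(2p^j-2p^{j-1}-1,1)$, the mod $(p,v_1,v_2)$ syntomic cohomology of \autoref{thm:syntomic-BPn} is one-dimensional. Every other weight-$1$ generator has a different degree, since $\lambda_i$, $\partial$ and the $\Xi_{i,d}$ with $d\ne 1$ or $i\ne j$ have different degrees; this requires a short degree check using $p\ge 5$. So it suffices to show the image is nonzero. For this I would factor the unit map through $\TC(\MU)$ and $\TC(\BP)$, using the unit $\MU\to\BP\langle 2\rangle$, and compare with the computation in \cite{AKHW24}. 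Alternatively, one can argue directly on the syntomic side: $\Xi_{j,1}$ is detected by $t^{p^{j-1}}\lambda_j$ in $\TC^-$. The class $t_1^{p^{j-1}}$ in the cobar complex maps, under the $S^1$-equivariant even filtration, to $t^{p^{j-1}}$ times the $\sigma$-suspension of $v_j$. This is the mechanism by which $\sigma^2 v_j$ relates to $\lambda_j$ in Hahn--Wilson, with $d_{p^j}(t^{p^{j-1}})=t^{p^j+p^{j-1}}\lambda_j$. Hence I would compute the image of $h_{1,j-1}$ in $\pi_*\grmot^*\TC^-(\MU)/(p,v_1,v_2)$ via the unit of the $t$-Bockstein spectral sequence. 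I expect it to be $t^{p^{j-1}}\lambda_j$ modulo higher Nygaard filtration, with $\lambda_j=\sigma v_j$ up to unit, which then maps nontrivially to $\BP\langle 2\rangle$ by \autoref{cor:TC-BPn}.

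Finally, the motivic spectral sequence for $\TC(\BP\langle 2\rangle)/(p,v_1,v_2)$ collapses by \autoref{TCBP2}, and the ANSS classes survive. So detection on $\EE_2$ in Adams weight $1$ implies that the homotopy classes map to the classes named $\Xi_{j,1}$, since these have no indeterminacy: Adams weight $0$ in that degree is zero. The main obstacle is the nonvanishing step, namely identifying the image of $h_{1,j-1}$ with $t^{p^{j-1}}\lambda_j$ in $\TC^-$. I would attempt this via naturality from $\MU$, where the unit map on the $\EE_2$-pages is the map from Novikov $\Ext$ to the homotopy fixed points of the Hopf algebroid $\pi_*\grmot^*\THH(\MU)$. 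There, $[t_1^{p^{j-1}}]$ is the coefficient of $t^{p^{j-1}}$ in the circle action $\eta_R$-type formula on $\sigma v_j$.
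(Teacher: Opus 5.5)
Your proposal follows the paper's argument: $\alpha_1,\beta_1',\gamma_1''$ are detected by $[t_1],[t_1^{p}],[t_1^{p^2}]$ in the even filtration of $V(2)$, the unit map sends these to the classes detected by $t^{p^{j-1}}\lambda_j$, and a degree count removes any ambiguity. The step you flag as the main obstacle is exactly the one the paper settles only by asserting $\Xi_{j,1}=t^{p^{j-1}}(\sigma^2t_1)^{p^{j-1}}$; also, the indeterminacy to rule out comes from classes of \emph{higher} Adams weight in the same degree (there are none for $p\ge 5$ by the same bookkeeping), not from Adams weight $0$.
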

\begin{proof}
The classes $\Xi_{1,1}$, $\Xi_{2,1}$ and $\Xi_{3.1}$, respectively are in the image of $t_1$, $t_1^p$, and $t_1^{p^2}$ respectively under the unit map 
\[ \pi_*\gr_{\ev}^*\mathbb{S}\to \pi_*\grmot\mathrm{TC}(\mathrm{BP}\langle 2\rangle)\] 
since  $\Xi_{1,1}=t\sigma^2t_1$, $\Xi_{2,1}=t^p(\sigma^2t_1)^p$ and $\Xi_{3,1}=t^{p^2}(\sigma^2t_1)^{p^2}$ and the classes  $t_1$, $t_1^p$, and $t_1^{p^2}$ detect $\alpha_1$, $\beta_1'$, and $\gamma_1^{''}$ in $V(2)_*$ respectively. There are no other classes in higher or lower motivic filtration in the same degree. 
\end{proof}

\begin{cor}\label{KBP2}
Let $p\ge 5$. There is a long exact sequence 
\[ 
0 \to \Sigma^{-2}M_2\to \pi_*(\K(\BP\langle 2\rangle)/(p,v_1,v_2))\to \pi_*(\TC(\BP\langle 2\rangle)/(p,v_1,v_2))\to \Sigma^{-1}\bF_p\{\partial\}\to 0
\]
where $M_2=\mathbb{F}_p\{\overline{\tau}_1,\overline{\tau}_2,\overline{\tau}_1\overline{\tau}_2\}$ and $|\overline{\tau}_i|=2p^i-1$ for $i=1,2$
and $\pi_*(\mathrm{K}(\BP\langle 2\rangle)/(p,v_1,v_2))[v_3^{-1}]$ 
is isomorphic to 
\[ 
\bF_{p}[v_{3}^{\pm 1}]\otimes \bigl( \mathbb{F}_p\langle  \partial ,\lambda_1, \lambda_{2}, \lambda_{3}\rangle\oplus \bigoplus_{j=1}^{3} \mathbb{F}_p\langle \lambda_{s} : 1\le s\le 3, s\ne j
\rangle \{ \Xi_{j,d} : 0 < d < p\} \bigr)
\]
as $\mathbb{F}_p[v_3^{\pm \ell}]\langle  \partial ,\lambda_1, \lambda_{2}, \lambda_{3}\rangle$-modules for some integer $\ell\ge 1$.
Here $|v_3|=2p^{3}-2$, $|\partial|=-1$, $|\lambda_i|=2p^i-1$ and $|\Xi_{j,d}|=2p^j-1-2dp^{j-1}$ for all $1\le i,j\le 3$ and $0<d<p$. 
\end{cor}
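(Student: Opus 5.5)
The plan is to combine the localization sequence for $\BP\langle 2\rangle$ with the Dundas--Goodwillie--McCarthy theorem, and then use \autoref{TCBP2} for the $\TC$ term.

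First, consider the cyclotomic trace $\K(\BP\langle 2\rangle)\to\TC(\BP\langle 2\rangle)$. Since $\BP\langle 2\rangle$ is connective with $\pi_0=\mathbb{Z}_{(p)}$, the Dundas--Goodwillie--McCarthy theorem applies to the map $\BP\langle 2\rangle\to \mathbb{Z}_{(p)}$, which is a $\pi_0$-isomorphism. Together with the Hesselholt--Madsen computation of $\K(\mathbb{Z}_p)_p$ versus $\TC(\mathbb{Z}_p)_p$, this gives a fiber sequence
\[\K(\BP\langle 2\rangle)_p\to \TC(\BP\langle 2\rangle)_p\to \Sigma^{-1}H\mathbb{Z}_p \oplus (\text{cofiber term from } \K(\mathbb{Z}_p)\to\K(\mathbb{Z}_{(p)})).\]
A cleaner formulation is $\mathrm{fib}(\K(R)_p\to\TC(R)_p)\simeq \mathrm{fib}(\K(\mathbb{Z}_p)_p\to\TC(\mathbb{Z}_p)_p)\simeq\Sigma^{-2}H\mathbb{Z}_p$ after $p$-completion, using that $\K(\BP\langle2\rangle)_p\simeq \K(\BP\langle2\rangle_p)_p$ and the known result for $\mathbb{Z}_p$. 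The outcome is a cofiber sequence $\K(\BP\langle 2\rangle)_p\to\TC(\BP\langle 2\rangle)_p\to\Sigma^{-1}H\mathbb{Z}_p$.

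Second, smash this sequence with $V(2)=\mathbb{S}/(p,v_1,v_2)$, which exists for $p\ge 5$. We have $H\mathbb{Z}_p\otimes V(2)=H\mathbb{F}_p\otimes \mathbb{S}/(v_1,v_2)$, whose homotopy is $\mathbb{F}_p\langle\overline{\tau}_1,\overline{\tau}_2\rangle$ with $|\overline\tau_i|=2p^i-1$; this is the dual-Steenrod computation of $\pi_*(H\mathbb{F}_p\otimes \mathbb{S}/(v_1,v_2))$. The long exact sequence in homotopy then has third term $\Sigma^{-1}\mathbb{F}_p\langle\overline\tau_1,\overline\tau_2\rangle$. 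The next step is to identify the map $\TC_*(\BP\langle2\rangle)/(p,v_1,v_2)\to\Sigma^{-1}\mathbb{F}_p\langle\overline\tau_1,\overline\tau_2\rangle$. I would show that it is surjective only onto $\Sigma^{-1}\mathbb{F}_p\{1\}$, with $\partial$ mapping to the generator, and zero on the $\overline\tau$-classes. The classes $\Sigma^{-1}\overline\tau_1,\Sigma^{-1}\overline\tau_2,\Sigma^{-1}\overline\tau_1\overline\tau_2$ then inject, via the connecting map, as $\Sigma^{-2}M_2$ into $\K_*$. Surjectivity onto the bottom class comes from naturality with $\mathbb{Z}_p$ (or $\mathbb{F}_p$): $\partial$ maps to the class detecting the fiber of $\K\to\TC$ in degree $-1$.

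The main obstacle is showing that the map is zero on $\Sigma^{-1}\overline\tau_1$, $\Sigma^{-1}\overline\tau_2$, and $\Sigma^{-1}\overline\tau_1\overline\tau_2$. I would argue by degrees using \autoref{TCBP2}: $\TC_*(\BP\langle2\rangle)/(p,v_1,v_2)$ in degrees $2p-2$, $2p^2-2$ and $2p^2+2p-3$ must be checked against the listed generators. For example, $\partial\lambda_1$ sits in degree $2p-2$, so a purely degree-based argument does not suffice. Instead I would use naturality along $\BP\langle2\rangle\to\mathbb{Z}_p$, where $\TC(\mathbb{Z}_p)/(p,v_1,v_2)$ is known from B\"okstedt--Madsen/Rognes and the analogous map to $\Sigma^{-1}H\mathbb{Z}_p\otimes V(2)$ is known to hit only the bottom class. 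Here the corresponding classes $\partial\lambda_1$, etc.\ map to classes whose images vanish. Since the map $\TC\to\Sigma^{-1}H\mathbb{Z}_p$ factors through $\TC(\mathbb{Z}_p)$, this gives the vanishing.

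Finally, for the $v_3$-localized statement, the kernel and cokernel terms are $v_3$-torsion, being bounded. Hence $\K_*/(p,v_1,v_2)[v_3^{-1}]\cong\TC_*/(p,v_1,v_2)[v_3^{-1}]$, and \autoref{TCBP2} gives the stated module structure over $\mathbb{F}_p[v_3^{\pm\ell}]\langle\partial,\lambda_1,\lambda_2,\lambda_3\rangle$, with $\ell$ the exponent of a $v_3$-self map on $V(2)$.
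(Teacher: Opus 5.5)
\textbf{There is a genuine gap, in degree $2p^2-2$.} Your overall architecture matches the paper's. You take the Dundas--Goodwillie--McCarthy cofiber sequence $\mathrm{K}\to\mathrm{TC}\xrightarrow{\omega}\Sigma^{-1}H\mathbb{Z}_p$ and smash with $V(2)$. Connectivity of $V(2)\otimes\mathrm{K}$ handles degree $-1$, which reduces the problem to the vanishing of $\omega_*$ in degrees $2p-2$, $2p^2-2$, $2p^2+2p-3$. Boundedness of the extra terms then gives the $v_3$-localized statement.

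The gap is in how you prove that vanishing. The factorization of $\omega$ through $\mathrm{TC}(\mathbb{Z}_p)$ is correct. But your claim that, for $\mathbb{Z}_p$, the map $V(2)_*\mathrm{TC}(\mathbb{Z}_p)\to V(2)_*\Sigma^{-1}H\mathbb{Z}_p$ hits only the bottom class is false.

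\begin{itemize}
\item Both $V(1)_*\mathrm{TC}(\mathbb{Z}_p)$ and $V(1)_*\Sigma^{-1}H\mathbb{Z}_p$ are concentrated in degrees $[-1,2p-1]$.
\item Hence the cofiber sequence $\Sigma^{2p^2-2}V(1)\xrightarrow{v_2}V(1)\to V(2)\to\Sigma^{2p^2-1}V(1)$ gives natural isomorphisms $V(2)_{2p^2-2}\mathrm{TC}(\mathbb{Z}_p)\cong V(1)_{-1}\mathrm{TC}(\mathbb{Z}_p)=\mathbb{F}_p\{\partial\}$ and $V(2)_{2p^2-2}\Sigma^{-1}H\mathbb{Z}_p=\mathbb{F}_p\{\Sigma^{-1}\overline{\tau}_2\}\cong V(1)_{-1}\Sigma^{-1}H\mathbb{Z}_p$. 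These are compatible with $\omega$.
\item Since $\omega_*(\partial)\neq 0$, the $v_2$-Bockstein partner of $\partial$ maps onto $\Sigma^{-1}\overline{\tau}_2$.
\end{itemize}

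So for $\mathbb{Z}_p$ the sequence actually ends in $\Sigma^{-1}\mathbb{F}_p\{1,\overline{\tau}_2\}$. This is consistent with $V(2)_*\mathrm{K}(\mathbb{Z}_p)$ and $V(2)_*\mathrm{TC}(\mathbb{Z}_p)$ both having dimension $2(p+3)$, which forces $\omega_*$ to have rank $2$.

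In degree $2p^2-2$, where $\omega_{\mathbb{Z}_p}$ is injective, your naturality argument therefore just restates the problem. You would have to show that $\partial\lambda_2$ and $\lambda_1\Xi_{2,1}$ map to zero in $V(2)_*\mathrm{TC}(\mathbb{Z}_p)$, and nothing in the proposal does this. Your route does work in degrees $2p-2$ and $2p^2+2p-3$, where $\omega_{\mathbb{Z}_p}$ genuinely vanishes.

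\textbf{What is missing.} You need class-specific information about $\mathrm{TC}(\mathrm{BP}\langle 2\rangle)$, which is what the paper supplies.
\begin{itemize}
\item $\partial\lambda_1$ and $\partial\lambda_2$ are reductions of mod $p$ classes. Since $V(0)_*\Sigma^{-1}H\mathbb{Z}_p$ is concentrated in degree $-1$, naturality along $V(0)\to V(2)$ kills their images under $\omega_*$.
\item $\lambda_1\Xi_{2,1}$ is the product of $\lambda_1$, which comes from algebraic K-theory, with $\Xi_{2,1}$, a Hurewicz image from $\pi_*V(2)$. So it lies in the image of the trace, hence in $\ker\omega_*$.
\end{itemize}

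\textbf{A smaller point.} The equivalence $\mathrm{K}(\mathrm{BP}\langle 2\rangle)_p\simeq\mathrm{K}(\mathrm{BP}\langle 2\rangle_p)_p$ is false for the $p$-local form; compare $\mathrm{K}_1(\mathbb{Z}_{(p)})/p$ with $\mathrm{K}_1(\mathbb{Z}_p)/p$. You should simply work with the $p$-complete form. There DGM together with the $\mathbb{Z}_p$ computation gives the cofiber $\Sigma^{-1}H\mathbb{Z}_p$ directly, as the paper tacitly does.
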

\begin{proof}
The same strategy as~\cite[Theorem 12.20]{AKACHR25} applies. By~\cite[Theorem~3.1.14]{DGM13}, there is a fiber sequence
\[ 
\begin{tikzcd}
\mathrm{K}(\BP\langle 2\rangle)/(p,v_1,v_2)\arrow{r} &  \mathrm{TC}(\BP\langle 2\rangle)/(p,v_1,v_2)\arrow{r}{\omega} & \Sigma^{-1}\mathbb{Z}/(p,v_1,v_2)
\end{tikzcd}
\]
where $\pi_*(\mathbb{Z}/(p,v_1,v_2))=\mathbb{F}_p\langle \overline{\tau}_1, \overline{\tau}_2\rangle$. 
This already implies the second claim. 
We know that the classes $\partial \lambda_1$, $\partial \lambda_2$ and $\partial \lambda_3$ lift to mod $p$ homotopy classes so by the commuting diagram
\[
\begin{tikzcd}
\mathrm{TC}(\BP\langle 2\rangle)/(p)\ar[r] \ar[d] & \Sigma^{-1}\mathbb{Z}/p=\Sigma^{-1}\mathbb{F}_p \ar[d] \\ 
\mathrm{TC}(\BP\langle 2\rangle)/(p,v_1,v_2) \ar[r] & \Sigma^{-1}\mathbb{Z}/(p,v_1,v_2)
\end{tikzcd}
\]
they have trivial image in $\pi_* \Sigma^{-1}\mathbb{Z}/(p,v_1,v_2)$. 
The classes $\lambda_1$ is in the image of the map
\[\pi_*(\mathrm{K}(\BP)/p) \to \pi_*(\mathrm{TC}(\BP\langle 2\rangle)/(p,v_1,v_2))\] 
by construction and the class $\Xi_{2,1}$ is in the image of the map
\[ \pi_*\mathbb{S}/(p,v_1,v_2) \to \pi_*\TC(\BP\langle 2\rangle/(p,v_1,v_2)\] 
so their product has trivial image in  $\pi_* \Sigma^{-1}\mathbb{Z}/(p,v_1,v_2)$. Therefore, the map 
\[\pi_*\mathrm{TC}(\BP\langle 2\rangle)/(p,v_1,v_2) \longrightarrow \pi_*\Sigma^{-1}\mathbb{Z}/(p,v_1,v_2) \]
 is the zero map in non-negative degrees. Since mod $(p,v_1,v_2)$-algebraic K-theory of $\BP\langle n\rangle$ is connective, this implies the first claim. 
\end{proof}

\begin{remark}
This extends work of the author with Ausoni, Culver, H\"oning, and Rognes \cite[Theorem~1.2]{AKACHR25} 
to all $\bE_{1}$-$\MU$-algebra forms of $\BP \langle 2 \rangle$. It also extends the computation to the prime $p=5$.
\end{remark}

\begin{figure}[!ht]
\resizebox{\textwidth}{!}{ 
\begin{tikzpicture}[radius=1,xscale=1.3,yscale=2]
\foreach \n in {-2,0,...,60} \node [below] at (\n,-.8-1) {$\n$};
\foreach \s in {-1,0,...,10} \node [left] at (-.3-2,\s) {$\s$};
\draw [thin,color=lightgray] (-2,-1) grid (60,10);
\node [below] at (0,0) {$1$};

\node [below] at (-1,1) {$\partial$};
\node [below] at (1,1) {$\Xi_{1,1}$};
\node [above] at (3,1) {$\lambda_1$};
\node [below] at (3,1) {$\Xi_{2,1}$};
\node [above] at (7,1) {$\lambda_2$};
\node [below] at (7,1) {$\Xi_{3,1}$};
\node [below] at (15,1) {$\lambda_3$};
\node [above] at (15,1) {$\Xi_{4,1}$};
\node [below] at (31,1) {$\lambda_4$};

\node [below] at (2,2) {$\partial\lambda_1$};
\node [above] at (6,2) {$\partial\lambda_2$};
\node [below] at (6,2) {$\lambda_1\Xi_{2,1}$};
\node [below] at (8,2) {$\Xi_{1,1}\lambda_2$};
\node [above] at (10,2) {$\lambda_1\lambda_2$};
\node [below] at (10,2) {$\lambda_1\Xi_{3,1}$};
\node [below] at (14,2) {$\partial\lambda_3$};
\node [above] at (14,2) {$\lambda_2\Xi_{3,1}$};
\node [above] at (16,2) {$\Xi_{1,1}\lambda_3$};
\node [below] at (18,2) {$\lambda_1\lambda_3$};
\node [above] at (18,2.2) {$\Xi_{2,1}\lambda_3$};
\node [above] at (18,2) {$\Xi_{4,1}\lambda_1$};
\node [below] at (22,2) {$\lambda_2\lambda_3$};
\node [above] at (22,2) {$\Xi_{4,1}\lambda_2$};
\node [below] at (30,2) {$\partial \lambda_4$};
\node [above] at (30,2) {$\Xi_{4,1}\lambda_3$};
\node [below] at (32,2) {$\Xi_{1,1}\lambda_4$};
\node [below] at (34,2) {$\lambda_1\lambda_4$};
\node [above] at (34,2) {$\Xi_{2,1}\lambda_4$};
\node [below] at (38,2) {$\lambda_2\lambda_4$};
\node [above] at (38,2) {$\Xi_{3,1}\lambda_4$};
\node [below] at (46,2) {$\lambda_3\lambda_4$};

\node [above] at (9,3) {$\partial\lambda_1\lambda_2$};
\node [above] at (17,3) {$\partial\lambda_1\lambda_3$};
\node [below] at (17,3) {$\lambda_1\lambda_2\Xi_{3,1}$};
\node [above] at (21,3) {$\lambda_1\Xi_{2,1}\lambda_3$};
\node [below] at (21,3) {$\partial\lambda_2\lambda_3$};
\node [above] at (23,3) {$\Xi_{1,1}\lambda_2\lambda_3$};
\node [above] at (25,3) {$\lambda_1\lambda_2\lambda_3$};
\node [below] at (25,3) {$\Xi_{4,1}\lambda_1\lambda_2$};
\node [below] at (33,3) {$\partial\lambda_1\lambda_4$};
\node [above] at (33,3) {$\Xi_{4,1}\lambda_1\lambda_3$};
\node [above] at (37,3) {$\Xi_{4,1}\lambda_2\lambda_3$};
\node [below] at (37,3) {$\partial \lambda_2\lambda_4$};
\node [below] at (37,2.8) {$\Xi_{2,1} \lambda_1\lambda_4$};
\node [below] at (39,3) {$\Xi_{1,1}\lambda_2\lambda_4$};
\node [below] at (41,3) {$\lambda_1\lambda_2\lambda_4$};
\node [above] at (41,3) {$\Xi_{3,1}\lambda_1\lambda_4$};
\node [above] at (45,3) {$\Xi_{3,1}\lambda_2\lambda_4$};
\node [below] at (45,3) {$\partial\lambda_3\lambda_4$};
\node [below] at (47,3) {$\Xi_{1,1}\lambda_3\lambda_4$};
\node [below] at (49,3) {$\lambda_1\lambda_3\lambda_4$};
\node [above] at (49,3) {$\Xi_{2,1}\lambda_3\lambda_4$};
\node [above] at (53,3) {$\lambda_2\lambda_3\lambda_4$};

\node [above] at (24,4) {$\partial\lambda_1\lambda_2\lambda_3$};
\node [below] at (40,4) {$\partial\lambda_1\lambda_2\lambda_4$};
\node [above] at (40,4) {$\Xi_{4,1}\lambda_1\lambda_2\lambda_3$};
\node [above] at (48,4) {$\Xi_{3,1}\lambda_1\lambda_2\lambda_4$};
\node [below] at (48,4) {$\partial\lambda_1\lambda_3\lambda_4$};
\node [above] at (52,4) {$\Xi_{2,1}\lambda_1\lambda_3\lambda_4$};
\node [below] at (52,4) {$\partial\lambda_2\lambda_3\lambda_4$};
\node [above] at (54,4) {$\Xi_{1,1}\lambda_2\lambda_3\lambda_4$};
\node [above] at (56,4) {$\lambda_1\lambda_2\lambda_3\lambda_4$};

\node [above] at (55,5) {$\partial\lambda_1\lambda_2\lambda_3\lambda_4$};

\end{tikzpicture}
}
\caption{The mod $(2,v_1,v_2,v_3,v_4)$-syntomic cohomology of $\BP\langle 3\rangle$. 
\label{fig:syntomicBP3}
}
\end{figure}

\begin{thm}\label{TCBP3}
Let $p\ge 7$. Then $\pi_{*}(\TC(\BP \langle 3 \rangle)/(p,v_1,v_2,v_3))$ is isomorphic to 
\[ 
\bF_{p}[v_{4}]\langle  \partial ,\lambda_1, \lambda_{2}, \lambda_{3},\lambda_4\rangle\oplus \bigoplus_{j=1}^{4} \mathbb{F}_p[v_3]\langle \lambda_{s} : 1\le s\le 4, s\ne j
\rangle \{ \Xi_{j,d} : 0 < d < p\} 
\]
as $\mathbb{F}_p[v_4^d]\langle  \partial ,\lambda_1, \lambda_{2}, \lambda_{3},\lambda_4\rangle$-modules for some integer $d\ge 1$. Here $|v_4|=2p^{4}-2$, $|\partial|=-1$, $|\lambda_i|=2p^i-1$ and $|\Xi_{j,d}|=2p^j-1-2dp^{j-1}$ for all $1\le i,j\le n+1$ and $0<d<p$. 
\end{thm}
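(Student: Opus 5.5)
The plan is to run the motivic spectral sequence
\[
\mathrm{E}_2^{s,2w-s}=\pi_s((\grmot^w \TC(\BP \langle 3 \rangle))/(p,v_1,v_2,v_3))\implies \pi_s(\TC(\BP \langle 3 \rangle)/(p,v_1,v_2,v_3)),
\]
which exists for $p\ge 7$ by \autoref{all-defin-same} because $\mathbb{S}/(p,v_1,v_2,v_3)$ exists at these primes. Its $\mathrm{E}_2$-page is given by \autoref{mod-(p ,...,v_n) syntomic} with $n=3$, namely $\mathbb{F}_p[v_4]$ tensored with the module in \eqref{eq:syntomic-bpn}. That page is concentrated in Adams weights $0$ through $5$, and the top class $\partial\lambda_1\lambda_2\lambda_3\lambda_4$ lies on the $5$-line, as in \autoref{fig:syntomicBP3}. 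Differentials $d_r$ raise Adams weight by $r$ and lower degree by one. Since the motivic filtration is even, only odd $r$ can be nonzero, so I only need to consider $d_3$ and $d_5$. This parity restriction is the same one used implicitly in \autoref{TCBP2}. The possible differentials are:
\begin{itemize}
\item $d_3$ from line $0$ to line $3$;
\item $d_3$ from line $1$ to line $4$;
\item $d_3$ from line $2$ to line $5$;
\item $d_5$ from line $0$ to line $5$.
\end{itemize}

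The sources and targets should be handled by linearity, as in the proof of \autoref{mod-(p ,...,v_n) syntomic}. The classes $\partial,\lambda_1,\ldots,\lambda_4$ come from $\pi_*\TC(\MU)/p$, or from $\TC(\BP)$, and are permanent cycles. The spectral sequence is a module over the motivic spectral sequence of $\mathrm{End}(\mathbb{S}/(p,v_1,v_2,v_3))$, so the differentials are linear over $\partial,\lambda_i$ and over $v_4^d$ for some $d$. It therefore suffices to rule out differentials on the module generators $v_4^k$, $v_4^k\Xi_{j,d}$ and $v_4^k\lambda_S\Xi_{j,d}$, together with the few line-$2$ generators that are not $\lambda$-multiples of lower classes.

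The core of the argument is degree bookkeeping modulo $|v_4|=2p^4-2$, exactly as in \autoref{TCBP2}. First, a differential on $v_4^k$ would land in degree $\equiv -1 \pmod{2p^4-2}$ on line $3$ or line $5$. I would check, using $p\ge 7$, that no classes sit in those residues: the line-$3$ and line-$5$ classes have degrees lying in an interval, reduced mod $2p^4-2$, that avoids $-1$. Second, for line-$1$ sources $\Xi_{j,d}$, of degree $2p^j-1-2dp^{j-1}$, the line-$4$ targets have degrees near sums of three $2p^i$'s plus a $\Xi$ or a $\partial$. I would list the residues of the line-$4$ classes and show there is a gap around each $|\Xi_{j,d}|-1$ modulo $2p^4-2$, using the fact that for $p\ge 7$ the differences $2p^i-2p^{i'}$ are large compared with $2dp^{j-1}$. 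The line-$2$ to line-$5$ case is similar: the only target is $v_4^k\partial\lambda_1\lambda_2\lambda_3\lambda_4$, and I would show that its degree minus one is not congruent to any line-$2$ generator degree. Once all differentials vanish, the $\mathrm{E}_2=\mathrm{E}_\infty$ page gives the stated groups. The module structure over $\mathbb{F}_p[v_4^d]\langle\partial,\lambda_1,\ldots,\lambda_4\rangle$ follows by the same argument as the last paragraph of \autoref{TCBP2}, since $\pi_0\mathrm{End}(\mathbb{S}/(p,v_1,v_2,v_3))=\mathbb{F}_p$, which rules out additive extensions.

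The main obstacle is this case check. With four $\lambda$'s and the families $\Xi_{j,d}$ for $j\le 4$, many classes occupy the middle lines, so degree gaps are far less automatic than at height $2$. This is presumably exactly why the hypothesis $p\ge 7$ is needed. I would organize the check by writing every degree as $\sum_i \epsilon_i 2p^i + c$ with small $c$ and comparing base-$p$ digits. For $p\ge 7$ the expansions are unique up to carries, so congruences modulo $2p^4-2$ can only hold when the digit patterns match. I expect such matches to be precisely the coincidences visible in \autoref{fig:syntomicBP3}. The next step would be to verify that those coincidences never pair a source on line $a$ with a target on line $a+3$ or $a+5$ in adjacent degrees.
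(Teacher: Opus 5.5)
Your plan is the paper's argument. You run the motivic spectral sequence, use parity to reduce to $d_3$ (lines $0\to 3$, $1\to 4$, $2\to 5$) and $d_5$ (line $0\to 5$), and rule these out by degree bookkeeping modulo $2p^4-2$. Your $\partial,\lambda_i$-linearity reduction to the sources $v_4^k$ and $v_4^k\Xi_{j,d}$ only shortens the paper's list of cases, and the check you defer does go through by your base-$p$ digit comparison: $|\Xi_{j,d}|-1=2(p-d)p^{j-1}-2$ never matches a line-$4$ residue. Two small fixes. First, a $d_3$ into $v_4^k\partial\lambda_1\lambda_2\lambda_3\lambda_4$ has its source in degree one \emph{more} than the target, not one less. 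Second, the degree gaps hold at every prime, so $p\ge 7$ enters only through the existence of $\mathbb{S}/(p,v_1,v_2,v_3)$, not through the case check.
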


\begin{proof}
For simplicity, we display the mod $(2,v_1,v_2,v_3,v_4)$-syntomic cohomology of $\mathrm{BP}\langle 3\rangle $ in
\autoref{fig:syntomicBP3}, but the reader should be able to extrapolate the necessary information from this
picture to verify the statements in this proof. Recall that the $d_r$-differentials differentials decrease degree by one and increase motivic filtration by $r$ and by definition the even lines are concentrated in even degrees and the odd lines are concentrated in odd degrees so we can only have motivic differentials of odd length. 
Since the motivic spectral sequence is concentrated on lines $0$, $1$, $2$, $3$ and $4$ the only possible motivic differentials are differentials of length $5$ from the $0$-line to the $5$-line 
and differentials of length $3$ from the $0$-line to the $3$-line, the $1$-line to the $4$-line, and the $2$-line to the $5$-line.  

For differentials of length $5$, the only possible sources of nontrivial differentials are differentials on $v_4^k$ in degree $(2p^4-2)k$. However, $|\partial \lambda_1\lambda_2\lambda_3\lambda_4v_4^j|=(2p^4-2)(j+1)+2p^3+2p^2+2p-3$ so for all primes, the classes $\partial \lambda_1\lambda_2\lambda_3\lambda_4v_4^j$ are not in degree one less than $|v_4^k|=(2p^4-2)k$ for any integers $k,j\ge 0$. 

For differentials of length $3$ with source on the zero line, again a nontrivial differential would have to have source $v_4^k$ in degree $(2p^4-2)k$.  It suffices to check that there are no classes in degree $-1\mod 2p^4-2$ on the three line. The $3$-line is generated by two families of classes. 
First, there are classes $\partial \lambda_1\lambda_2$,   $\partial \lambda_1\lambda_3$,  $\partial \lambda_1\lambda_4$,  $\partial \lambda_2\lambda_3$,  $\partial \lambda_2\lambda_4$,  $\partial \lambda_3\lambda_4$, $\lambda_1\lambda_2\lambda_3$, $\lambda_1\lambda_2\lambda_4$, $\lambda_1\lambda_3\lambda_4$, $\lambda_2\lambda_3\lambda_4$ in degrees $2p^2+2p-3$, $2p^3+2p-3$, $2p-1$, $2p^3+2p^2-3$, $2p^2-1$, $2p^3-1$, 
$2p^3+2p^2+2p-3$, $2p^2+2p-1$, $2p^3+2p-1$ and $2p^3+2p^2-1$ mod $2p^4-2$ respectively. 
Second, there are classes
$\Xi_{1,d} \lambda_2\lambda_3$, $\Xi_{2,d}\lambda_1\lambda_3$, $\Xi_{3,d}\lambda_1\lambda_2$, $\Xi_{1,d}\lambda_2\lambda_4$,  $\Xi_{2,d}\lambda_1\lambda_4$,  $\Xi_{4,d}\lambda_1\lambda_2$, 
$\Xi_{1,d}\lambda_3\lambda_4$,$\Xi_{3,d}\lambda_1\lambda_4$, $\Xi_{4,d}\lambda_1\lambda_3\lambda_4$, $\Xi_{2,d}\lambda_3\lambda_4$, $\Xi_{3,d}\lambda_2\lambda_4$ and 
$\Xi_{4,d}\lambda_2\lambda_3$ 
in degrees 
$2p^3+2p^2+2p-3-2d$, $2p^3+2p^2+2p-3-2dp$, $2p^3+2p^2+2p-3-2dp^2$,
$2p^2+2p-1-2d$, $2p^2+2p-1-2dp$, $2p^4+2p^2+2p-3-2dp^3$,
$2p^3+2p-1-2d$, $2p^2+2p-1-2dp^2$, $2p^2+2p-1-2dp^3$, $2p^3+2p^2-1-2dp$, $2p^3+2p^2-1-2dp^2$ and $2p^3+2p^2-1-2dp^3$ mod $2p^4-2$ respectively for $0<d<p$. Consequently, there is no possible nontrivial differential from the $0$-line to the $3$-line for degree reasons. 

For differentials of length $3$ with source on the $1$-line, the possible classes that can be sources of nontrivial differentials are of the form $\partial$, $\lambda_1$, $\lambda_2$, $\lambda_3$, $\lambda_4$, 
$\Xi_{1,d}$, $\Xi_{2,d}$, $\Xi_{3,d}$ and $\Xi_{4,d}$ in degrees $-1$, $2p-1$, $2p^2-1$, $2p^3-1$, $1$, $2p-1-2d$, $2p^2-1-2pd$, $2p^3-2p^2d$ and $2p^4-1-2dp^3$ $\mod 2p^4-2$ for $0<d<p$ respectively. 
The potential nonzero targets are $\partial \lambda_1\lambda_2\lambda_3$, $\partial\lambda_1\lambda_2\lambda_4$, $\partial \lambda_1\lambda_3\lambda_4$, $\partial \lambda_2\lambda_3\lambda_4$, $\Xi_{1,d}\lambda_2\lambda_3\lambda_4$, $\Xi_{2,d}\lambda_1\lambda_3\lambda_4$, $\Xi_{3,d}\lambda_1\lambda_2\lambda_4$ and $\Xi_{4,d}\lambda_2\lambda_3$ in degrees $2p^3+2p^2+2p-4$, $2p^2+2p-2$, $2p^3+2p-2$, $2p^3+2p^2-2$, $2p^3+2p^2+2p-2-2d$, $2p^3+2p^2+2p-2-2pd$, $2p^3+2p^2+2p-2-2p^2d$ and $2p^3+2p^2+2p-2-2p^3d$ $\mod$ $2p^4-2$ for $0<d<p$ and $0\le j\le 3$. Since none of the potential targets are in degree one less then the potential sources mod $2p^{n+1}-2$, there are no possible such nontrivial $d_3$-differentials. 

For differentials of length $3$ with source on the $2$-line, the possible classes that can be sources of nontrivial differentials are of the form $\partial \lambda_1$, $\partial \lambda_2$, $\partial \lambda_3$, $\partial \lambda_4$, $\lambda_1\lambda_2$, $\lambda_1\lambda_3$, $\lambda_1\lambda_4$, $\lambda_2\lambda_3$, $\lambda_2\lambda_4$, $\lambda_3\lambda_4$, $\Xi_{1,d}\lambda_2$, $\Xi_{1,d}\lambda_3$, $\Xi_{1,d}\lambda_4$, $\Xi_{2,d}\lambda_3$, $\Xi_{2,d}\lambda_4$, $\Xi_{3,d}\lambda_4$, $\Xi_{2,d}\lambda_1$, $\Xi_{3,d}\lambda_1$, $\Xi_{4,d}\lambda_1$, $\Xi_{3,d}\lambda_2$, $\Xi_{4,d} \lambda_2$, $\Xi_{4,d}\lambda_3$ in degrees $2p-2$, $2p^2-2$, $2p^3-2$, $0$, $2p^2+2p-2$, $2p^3+2p-2$, $2p$, $2p^3+2p^2-2$, $2p^2$, $2p^3$, $2p^2+2p-2-2d$, $2p^3+2p-2-2d$, $2p-2d$, $2p^3+2p^2-2-2pd$, $2p^2-2dp$, $2p^3-2dp^2$, $2p^2+2p-2-2dp$, $2p^3+2p-2dp^2$, $2p-2dp^3$, $2p^3+2p^2-2-2dp^2$, $2p^2-2dp^3$ and $2p^3-2dp^3$ $\mod$ $2p^4-2$ respectively. However, since $|\partial \lambda_1\lambda_2\lambda_3\lambda_4|=2p^3+2p^2+2p-3$ $\mod $ $2p^4-2$, there are no possible such differentials of length $3$. 
\end{proof}

\begin{cor}\label{KBP3}
Let $p\ge 7$. There is a long exact sequence
\[ 
0 \to \Sigma^{-2}M_3\to \pi_*(\K(\BP\langle 3\rangle)/(p,v_1,v_2,v_3))\to \pi_*(\TC(\BP\langle 3\rangle)/(p,v_1,v_2,v_3))\to \Sigma^{-1}\bF_p\{\partial\}\to 0
\]
where $M_3=\mathbb{F}_p\{\overline{\tau}_1,\overline{\tau}_2,\overline{\tau}_3,\overline{\tau}_1\overline{\tau}_2,\overline{\tau}_1\overline{\tau}_3,\overline{\tau}_2\overline{\tau}_3,\overline{\tau}_1\overline{\tau}_2\overline{\tau}_3 \}$ and $|\overline{\tau}_i|=2p^i-1$ for $i=1,2,3$. Moreover, we compute that $\pi_*(\mathrm{K}(\BP\langle 3\rangle)/(p,v_1,v_2,v_3))[v_4^{-1}]$ 
is isomorphic to 
\[ 
\bF_{p}[v_{4}^{\pm 1}]\otimes \left ( \mathbb{F}_p\langle  \partial ,\lambda_1, \lambda_{2}, \lambda_{3}, \lambda_{4} \rangle\oplus \bigoplus_{j=1}^{4} \mathbb{F}_p\langle \lambda_{s} : 1\le s\le 3, s\ne j
\rangle \{ \Xi_{j,d} : 0 < d < p\} \right )
\]
as $\mathbb{F}_p[v_4^{\pm \ell}]\langle  \partial ,\lambda_1, \lambda_{2}, \lambda_{3},\lambda_{4}\rangle$-modules for some integer $\ell\ge 1$.
Here $|v_4|=2p^{4}-2$, $|\partial|=-1$, $|\lambda_i|=2p^i-1$ and $|\Xi_{j,d}|=2p^j-1-2dp^{j-1}$ for all $1\le i,j\le 4$ and $0<d<p$. 
\end{cor}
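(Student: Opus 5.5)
We follow the same strategy as in the proof of \autoref{KBP2}, i.e.\ that of~\cite[Theorem~12.20]{AKACHR25}, now feeding in \autoref{TCBP3} as the input. By~\cite[Theorem~3.1.14]{DGM13} applied to the connective map $\BP\langle 3\rangle\to\mathbb{Z}_{(p)}$, together with the fact that $\K(\mathbb{Z}_p)\to\TC(\mathbb{Z}_p)$ has fiber $\Sigma^{-2}\mathbb{Z}_p$ after $p$-completion (up to connective cover), we obtain a fiber sequence
\[
\K(\BP\langle 3\rangle)/(p,v_1,v_2,v_3)\longrightarrow \TC(\BP\langle 3\rangle)/(p,v_1,v_2,v_3)\overset{\omega}{\longrightarrow}\Sigma^{-1}\mathbb{Z}/(p,v_1,v_2,v_3)\,.
\]
Here $p\ge 7$ guarantees that $\mathbb{S}/(p,v_1,v_2,v_3)$ exists. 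Since $\mathbb{Z}$ is a $\BP$-module killed by $v_i$, the spectrum $\mathbb{Z}/(p,v_1,v_2,v_3)$ has homotopy $\mathbb{F}_p\langle\overline{\tau}_1,\overline{\tau}_2,\overline{\tau}_3\rangle$ with $|\overline{\tau}_i|=2p^i-1$. This target is bounded, so $v_4$ acts nilpotently on it. Inverting $v_4$ (using a $v_4^\ell$ self-map of $\mathbb{S}/(p,v_1,v_2,v_3)$, whose $\pi_0\mathrm{End}=\mathbb{F}_p$) therefore identifies $\K[v_4^{-1}]$ with $\TC[v_4^{-1}]$. Combined with \autoref{TCBP3}, this gives the second claim immediately.

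For the exact sequence, it suffices to show that $\omega_*$ vanishes in nonnegative degrees and that in degree $-1$ it is surjective onto $\Sigma^{-1}\mathbb{F}_p\{\partial\}$. Given this, $\pi_*\K$ is the kernel of $\omega_*$ plus the cokernel part in degrees $\ge 0$. That cokernel is $\Sigma^{-2}$ of the positive-degree part of $\mathbb{F}_p\langle\overline{\tau}_1,\overline{\tau}_2,\overline{\tau}_3\rangle$, namely $\Sigma^{-2}M_3$. The positive-degree pieces of the target sit in degrees $|\overline{\tau}_S|-1$, and each is hit as a boundary into $\K$, since $\K$ of a connective ring is connective mod a finite complex.

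The surjectivity in degree $-1$ is detected on the unit: $\partial$ maps to the generator, as for $\mathbb{Z}_p$, via naturality along $\BP\langle 3\rangle\to\mathbb{Z}_{(p)}$. For vanishing in degrees $\ge 0$, we check the $\mathbb{F}_p$-basis of \autoref{TCBP3} one class at a time. The target is nonzero only in degrees $|\overline{\tau}_S|-1$, which are even-minus-$|S|$ sums, so only finitely many classes can map nontrivially.
\begin{itemize}
\item The classes $\partial\lambda_i$ and more generally $\partial\lambda_S$ lift to $\TC(\BP\langle 3\rangle)/p$. There the target is $\Sigma^{-1}\mathbb{F}_p$, which is zero in nonnegative degrees, so these map to zero by the commutative square used in \autoref{KBP2}.
\item The classes $\lambda_i$ are in the image of $\K(\BP)/p$, or of $\K(\MU)$. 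By naturality of $\omega$ along $\MU\to\BP\langle 3\rangle$, and since $\lambda_i$ lifts to $\K$ there, their images vanish.
\item The classes $\Xi_{j,1}$ are in the Hurewicz image of $\pi_*\mathbb{S}/(p,v_1,v_2,v_3)$, by the analogue of \autoref{Hurewicz-BP2} using $t_1^{p^{j-1}}$. They therefore lift to $\K$ through the unit $\mathbb{S}\to\K(\BP\langle 3\rangle)$.
\item Products of classes lifting to $\K$, one of which comes from $\K(\MU)$ or the sphere, map to zero, because $\omega$ is linear over $\K(\MU)$ and over $\mathbb{S}$-module operations.
\end{itemize}

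The main obstacle is covering the remaining degrees. The classes $\Xi_{j,d}$ with $d>1$, and products like $\lambda_S\Xi_{j,d}$, are not obviously in any Hurewicz image. For these we first do a degree check modulo $2p^4-2$, in the style of the proof of \autoref{TCBP3}: we verify that their degrees avoid $\{|\overline{\tau}_S|-1\}$ for $p\ge 7$. Any residual coincidences we handle with the module-over-$\K(\MU)/p$ argument.
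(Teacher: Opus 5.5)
Your strategy is the paper's. You use the Dundas--Goodwillie--McCarthy cofiber sequence with bounded target $\Sigma^{-1}\mathbb{Z}/(p,v_1,v_2,v_3)$, and nilpotence of $v_4$ on that target for the $v_4$-inverted statement. You then kill $\omega_*$ in nonnegative degrees using mod $p$ lifts of the $\partial\lambda_U$, lifts of $\lambda$-monomials to $\mathrm{K}$, and the Hurewicz classes $\Xi_{j,1}$.

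The gap is that you stop at the one step with real content. You never determine which classes of $\pi_*(\mathrm{TC}(\mathrm{BP}\langle 3\rangle)/(p,v_1,v_2,v_3))$ lie in the seven degrees $|\overline{\tau}_U|-1$, $\emptyset\ne U\subseteq\{1,2,3\}$. Instead you promise a check plus a fallback for ``residual coincidences.'' That fallback only kills products of a liftable class with some $\Xi_{j,1}$. Had some $\lambda_S\Xi_{j,d}$ with $d>1$ landed in one of those degrees, nothing in your proposal would dispose of it, since, as you say, such classes are not obviously Hurewicz images.

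The check should also not be done modulo $2p^4-2$. The target is concentrated in degrees $\le 2p^3+2p^2+2p-4<|v_4|-1$, so $v_4$-multiples are irrelevant and you must compare exact degrees. Reducing modulo $|v_4|$ only creates spurious hits, such as $\Xi_{1,1}\lambda_4$, whose degree $2p^4+2p-4$ is congruent to $2p-2$.

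The missing verification is short. We have $|\lambda_S\Xi_{j,d}|=2\bigl(p^{j-1}(p-d)+\sum_{s\in S}p^s\bigr)-|S|-1$ and $|\overline{\tau}_U|-1=2\sum_{u\in U}p^u-|U|-1$, with $|S|,|U|\le 3$. So an equality forces the two sums to differ by at most $1$. Reducing modulo $p$ and counting base-$p$ digits then shows equality occurs only for $d=1$, $j-1\in S$, and $U=(S\setminus\{j-1\})\cup\{j\}$. The same kind of argument shows that among the classes $\partial^{\epsilon}\lambda_S$ ($\epsilon\in\{0,1\}$), only $\partial\lambda_U$ occurs.

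Hence the complete list in nonnegative target degrees is $\partial\lambda_U$ for $\emptyset\ne U\subseteq\{1,2,3\}$, together with:
\begin{itemize}
\item $\lambda_1\Xi_{2,1}$ in degree $2p^2-2$;
\item $\lambda_2\Xi_{3,1}$ in degree $2p^3-2$;
\item $\lambda_1\lambda_2\Xi_{3,1}$ in degree $2p^3+2p-3$;
\item $\lambda_1\lambda_3\Xi_{2,1}$ in degree $2p^3+2p^2-3$.
\end{itemize}
No $\Xi_{j,d}$ with $d>1$, no $\lambda_4$ and no $\Xi_{4,d}$ appear, so your listed ingredients suffice. The $\partial\lambda_U$ die by the mod $p$ square. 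The four products die because $\lambda_1,\lambda_2,\lambda_1\lambda_2,\lambda_1\lambda_3$ lift to $\mathrm{K}$ and $\Xi_{2,1},\Xi_{3,1}$ are Hurewicz images.

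This is exactly the paper's enumeration. Its last sentence names only three of the products, but it records the lift of $\lambda_1\lambda_3$, which is what handles $\lambda_1\lambda_3\Xi_{2,1}$.

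One further point. Applying Dundas--Goodwillie--McCarthy to $\mathrm{BP}\langle 3\rangle\to\mathbb{Z}_{(p)}$ compares with $\mathrm{K}(\mathbb{Z}_{(p)})$, which, unlike $\mathrm{K}(\mathbb{Z}_p)$, has infinitely generated mod $p$ homotopy in degree $1$. To get the cofiber $\Sigma^{-1}\mathbb{Z}/(p,v_1,v_2,v_3)$ you must work with $\mathrm{BP}\langle 3\rangle^{\wedge}_p$, as the paper also does tacitly.
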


\begin{proof}
By~\cite[Theorem~3.1.14]{DGM13}, there is a fiber sequence
\[ 
\begin{tikzcd}
\mathrm{K}(\BP\langle 3\rangle)/(p,v_1,v_2,v_3)\arrow{r} & \mathrm{TC}(\BP\langle 3\rangle)/(p,v_1,v_2,v_3) \arrow{r}{\omega} & \Sigma^{-1}\mathbb{Z}/(p,v_1,v_2,v_3)
\end{tikzcd}
\]
where $\pi_*\mathbb{Z}/(p,v_1,v_2,v_3)=\mathbb{F}_p\langle \overline{\tau}_1, \overline{\tau}_2, \overline{\tau}_3\rangle$. This already implies the second claim. 
We know that the classes $\partial \lambda_1$, $\partial \lambda_2$, $\partial \lambda_3$, $\partial \lambda_1\lambda_2$, $\partial \lambda_1\lambda_3$, $\partial \lambda_2\lambda_3$ and $\partial \lambda_1\lambda_2\lambda_3$ lift to mod $p$ homotopy classes so by the commuting diagram
\[
\begin{tikzcd}
\mathrm{TC}(\BP\langle 3\rangle)/(p)\ar[r] \ar[d] & \Sigma^{-1}\mathbb{Z}/p=\Sigma^{-1}\mathbb{F}_p \ar[d] \\ 
\mathrm{TC}(\BP\langle 3\rangle)/(p,v_1,v_2,v_3) \ar[r] & \Sigma^{-1}\mathbb{Z}/(p,v_1,v_2,v_3)
\end{tikzcd}
\]
these classes have trivial image under the map $\omega_*$. 
Moreover, by construction and by \autoref{KBP3} the classes $\lambda_1$, $\lambda_2$, $\lambda_3$, and $\lambda_1\lambda_2$, $\lambda_1\lambda_3$, lift to $\pi_*(\TC(\BP)/p)$. Since $\Xi_{2,1}$ and $\Xi_{3,1}$ are in the image of $\pi_*(\mathbb{S}/(p,v_1,v_2,v_3))\to \pi_*(\mathrm{TC}(\BP\langle 3\rangle)/(p,v_1,v_2,v_3))$ by \autoref{Hurewicz-BP2}, so the products $\lambda_1\Xi_{2,1}$, $\lambda_2\Xi_{3,1}$ and $\lambda_1\lambda_2\Xi_{3,1}$ are not in the image of the map $\omega_*$. Consequently, the map  $\omega_*$ is zero in non-negative degrees. 
Since mod $(p,v_1,v_2,v_3)$-algebraic K-theory of $\BP\langle 3\rangle$ is connective, this implies the first claim. 
\end{proof}

\subsection{Lichtenbaum--Quillen}
In this section, we note that our computations imply various Lichtenbaum--Quillen type properties. 

\begin{thm}\label{thm:finite-TC}
For any integers $i_0,i_1,\cdots ,i_{n+1}$ such that the generalized Smith--Toda complex  $\mathbb{S}/(p^{i_0},v_1^{i_1},\cdots,v_{n+1}^{i_{n+1}})$ exists, the graded abelian group
\[ \pi_*\TC(\BP\langle n\rangle )/(p^{i_0},v_1^{i_1},\cdots, v_{n+1}^{i_{n+1}})\]
is finite; i.e. it is finite in each degree and only non-trivial in finitely many degrees. 
Consequently, the algebraic K-theory $\mathrm{K}(\mathrm{BP}\langle n\rangle)$ of $\mathrm{BP}\langle n\rangle$ has fp-type $n+1$. 
\end{thm}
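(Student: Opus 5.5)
The plan is to deduce finiteness from \autoref{thm:syntomic-BPn} using the motivic spectral sequence, together with a Bockstein/d\'evissage argument. First I would work at the associated-graded level. Consider the graded object
\[ (\grmot^*\TC(\BP\langle n\rangle)_p^{\wedge})/(p^{i_0},v_1^{i_1},\cdots ,v_{n+1}^{i_{n+1}}), \]
formed by iterated cofibers of the self-maps $p^{i_0}$, $v_1^{i_1}$, $\dots$, $v_{n+1}^{i_{n+1}}$ in $\gr_{\ev}^*\mathbb{S}$-modules. For each $k$, the quotient by $v_k^{i_k}$ has a finite filtration whose layers are suspensions of the quotient by $v_k$. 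Iterating this over $k=0,\dots ,n+1$ expresses the object above as a finite iterated extension of bigraded suspensions of $(\grmot^*\TC(\BP\langle n\rangle))/(p,v_1,\cdots,v_{n+1})$. By \autoref{thm:syntomic-BPn}, this last object has finite total dimension over $\bF_p$ and is concentrated in degrees $[-1,\sum 2p^i-n-1]$ and Adams weights $[0,n+2]$. A finite extension of such objects is again finite-dimensional, is concentrated in a bounded range of degrees, and has only finitely many nonzero Adams weights.

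Next I would pass to homotopy via the motivic spectral sequence
\[ \pi_s\grmot^w\TC(\BP\langle n\rangle)/(p^{i_0},\dots ,v_{n+1}^{i_{n+1}})\implies \pi_s\TC(\BP\langle n\rangle)/(p^{i_0},\dots ,v_{n+1}^{i_{n+1}}). \]
This spectral sequence exists because the Smith--Toda complex exists and \autoref{all-defin-same} identifies the reductions. It conditionally converges by \cite[Corollary~A.2.9]{HRW22}. Since the $\EE_2$-page is nonzero in only finitely many Adams weights, the spectral sequence has a horizontal vanishing line. Conditional convergence then upgrades to strong convergence with a finite filtration in each degree. Each group on the abutment is therefore a finite iterated extension of finite $\bF_p$-vector spaces, hence finite. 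It vanishes outside the bounded range of degrees found in the first step. I expect this convergence step to be the main technical point. One must check that $p$-completion commutes with the finite quotient, which holds because the quotient is by a finite complex. One must also check that the motivic filtration on $\TC$ is complete in the required sense.

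Finally, for the statement about fp-type I would use the fiber sequence of \cite[Theorem~3.1.14]{DGM13} (Dundas--McCarthy), as in \autoref{KBP2}. It shows that $\K(\BP\langle n\rangle)_p$ and $\TC(\BP\langle n\rangle)$ differ by $\Sigma^{-1}\mathbb{Z}_p$ up to connective covers. Tensoring with the finite complex $\mathbb{S}/(p^{i_0},\dots ,v_{n+1}^{i_{n+1}})$, which is of type $n+2$, the term from $\Sigma^{-1}\mathbb{Z}_p$ has bounded, finite homotopy. The same then holds for $\K(\BP\langle n\rangle)$ tensored with this type $n+2$ complex. By the definition of fp-type (Mahowald--Rezk), this gives fp-type at most $n+1$.

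Equality requires the redshift input that $\K(\BP\langle n\rangle)$ does not have finite homotopy against type $n+1$ complexes. I would get this from \autoref{mod-(p ,...,v_n) syntomic}. That result gives the $v_{n+1}$-periodic family $\bF_p[v_{n+1}]$ of free classes, which survives the motivic spectral sequence after inverting $v_{n+1}$, as in the proofs of \autoref{TCBP2} and \autoref{TCBP3}. Hence $\K(\BP\langle n\rangle)\otimes T(n+1)\neq 0$.
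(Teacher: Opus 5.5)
Your proposal is correct and follows essentially the same route as the paper: finiteness of the mod $(p,v_1,\ldots,v_{n+1})$ syntomic cohomology from \autoref{thm:syntomic-BPn}, finite Bockstein filtrations to reach the generalized Smith--Toda quotient, the motivic spectral sequence to pass to $\pi_*\TC$, and the Dundas--Goodwillie--McCarthy fiber sequence to pass to $\K$. You also spell out why having only finitely many Adams weights upgrades conditional convergence to strong convergence, which the paper leaves implicit. Your extra argument that the fp-type is exactly $n+1$ goes beyond this proof in the paper, which only establishes the finiteness bound here and treats nonvanishing separately in \autoref{cor:redshift}. For general $n$, though, you should not justify survival of $v_{n+1}^k\cdot 1$ by appeal to \autoref{TCBP2} and \autoref{TCBP3}, whose collapse arguments are specific to $n=2,3$ at large primes. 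Instead, note that these classes are images of the unit under a $v_{n+1}$-self map of a type $n+1$ complex, so they are permanent cycles, and that they lie in Adams weight $0$, where no differential can hit them.
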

\begin{proof}
By \autoref{thm:syntomic-BPn}, the bigraded abelian group
$
\pi_*((\grmot^*\TC(\BP\langle n\rangle))/(p,v_1,\cdots,v_{n+1}))
$
is finite in the sense it is finite in all bidegrees and only non-trivial in finitely many bidegrees. 
This immediately implies that 
$
\pi_*((\grmot^*\TC(\BP\langle n\rangle))/(p^{i_0},v_1^{i_1},\cdots,v_{n+1}^{i_{n+1}}))
$
is finite for any integers $i_0,i_1,\cdots, i_{n+1}$ using the finite $p,v_1,\cdots ,v_{n+1}$-Bockstein filtrations. 
Choosing integers $i_0,i_1,\cdots,i_{n+1}$ such that the Moore spectrum $\mathbb{S}/(p^{i_0},v_1^{i_1},\cdots,v_{n+1}^{i_{n+1}})$ exists, 
this immediately implies that 
$
\pi_*(\TC(R)/(p^{i_0},v_1^{i_1},\cdots,v_{n+1}^{i_{n+1}}))
$
is finite by motivic spectral sequence and consequently $\TC(R)$ has fp-type $n+1$ in the sense of \cite{MR99}. 
By~\cite[Theorem~3.1.14]{DGM13}, there is a fiber sequence 
\[
\K(\BP\langle n\rangle )/(p^{i_0},v_1^{i_1},\cdots ,v_{n+1}^{i_{n+1}})\to \TC(\BP\langle n\rangle )/(p^{i_0},v_1^{i_1},\cdots ,v_{n+1}^{i_{n+1}})\to \mathbb{Z}/(p^{i_0},v_1^{i_1},\cdots ,v_{n+1}^{i_{n+1}})
\]
and the algebraic K-theory of $R$ has fp-type $n+1$ since $\pi_*\mathbb{Z}/(p^{i_0},v_1^{i_1},\cdots ,v_{n+1}^{i_{n+1}})$ is also finite. 
\end{proof}

\begin{remark}
This extends~\cite[Theorem B]{HW22} to arbitrary $\mathbb{E}_1$ $\MU$-algebra forms of $\BP\langle n\rangle$. 
\end{remark}
 
This implies the higher height analogue of the Lichtenbaum--Qullen conjecture as proposed by Ausoni and Rognes in~\cite{AR08} as part of the family of conjectures known as the redshift conjectures.
\begin{cor}\label{cor:LQC-K-theory}
Let $n\ge -1$ and $p$ be a prime. The fiber of the map
\[ \K(\BP\langle n\rangle )_{(p)}\longrightarrow  L_{n+1}^{f}\K(\BP\langle n\rangle )_{(p)} 
\]
is bounded above. 
\end{cor}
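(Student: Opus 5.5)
The plan is to deduce the statement from \autoref{thm:finite-TC} by the standard argument of Hahn--Wilson~\cite{HW22}, which turns finiteness of type $n+2$ quotients into bounded above fibers of $L_{n+1}^f$-localizations. Write $X=\K(\BP\langle n\rangle)_{(p)}$ and $F$ for the fiber of $X\to L_{n+1}^fX$. Since $L_{n+1}^f$ is smashing, $F\simeq X\otimes C_{n+1}^f\mathbb{S}_{(p)}$, where $C_{n+1}^f\mathbb{S}_{(p)}$ is the $L_{n+1}^f$-acyclic colocalization. It can be written as a filtered colimit of desuspended type $n+2$ complexes, for example $\mathrm{colim}_k\,\Sigma^{-(n+2)}\mathbb{S}/(p^{i_0(k)},\dots,v_{n+1}^{i_{n+1}(k)})$ along a cofinal system of generalized Smith--Toda complexes, or more robustly along any cofinal tower of finite type $n+2$ spectra.

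The key steps are as follows. First, I will show that $\pi_*(X\otimes V)$ is bounded above for every finite $p$-local spectrum $V$ of type $\ge n+2$, with a bound independent of $V$ after accounting for its cell dimension. For the generalized Smith--Toda complexes in \autoref{thm:finite-TC}, finiteness of $\pi_*$ of $\TC(\BP\langle n\rangle)/(p^{i_0},\dots,v_{n+1}^{i_{n+1}})$ together with the Dundas--Goodwillie--McCarthy fiber sequence already gives this. Thick subcategory arguments, using that $V$ lies in the thick subcategory generated by any single type $n+2$ complex, then extend it to all type $n+2$ complexes. The uniform bound is what matters, and it should come from the syntomic computation rather than from \autoref{thm:finite-TC} alone.

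Second, I will make the bound uniform. By \autoref{thm:syntomic-BPn}, the mod $(p,v_1,\dots,v_{n+1})$ syntomic cohomology is concentrated in degrees $[-1,N]$ with $N=\sum_{i=1}^{n+1}2p^i-n-1$, and it has finitely many Adams weights. Passing to $(p^{i_0},\dots,v_{n+1}^{i_{n+1}})$ via Bockstein filtrations, the quotient is a finite iterated extension of shifts by $|v_j^{a}|$ of this. So the top degree grows linearly in the $i_j$, namely by at most $\sum_j i_j|v_j|+N$, and the cofiber of $\Sigma^{i_j|v_j|}V\to V$ shifts it by exactly that amount. After the desuspensions built into the colimit system, each term $\Sigma^{-d_k}X\otimes V_k$ has homotopy bounded above by a constant $c$ independent of $k$.

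Third, homotopy commutes with filtered colimits, so $\pi_m F=\mathrm{colim}_k\,\pi_m(\Sigma^{-d_k}X\otimes V_k)=0$ for $m>c$. This gives a bounded above fiber.

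The main obstacle is the uniformity in the second step. Finiteness of each quotient does not by itself bound the colimit, so one must either track the degree shifts precisely through the Bockstein towers and the motivic spectral sequence, which converges and here has only finitely many Adams weights, or instead invoke the general criterion of~\cite{HW22} (cf.~\cite[\S~1.1]{AKHW24}). That criterion says fp-type $n+1$ together with a bounded above syntomic vanishing line implies the Lichtenbaum--Quillen property. I would try the latter first, verifying its hypotheses from \autoref{thm:finite-TC} and \autoref{thm:syntomic-BPn}, and falling back on the explicit colimit computation only if needed. The cases $n=-1,0$ are classical and can serve as sanity checks.
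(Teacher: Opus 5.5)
Your explicit argument is sound, but it follows a different route from the paper. The paper's proof is a single citation. \autoref{thm:finite-TC} gives fp-type $n+1$, and \cite[Theorem~8.2(2)]{MR99}, as used in \cite[Theorem~3.1.3]{HW22}, deduces from fp-type $n+1$ alone that $X\to L_{n+1}^fX$ has bounded above fiber. An arithmetic fracture square then moves between $p$-local and $p$-complete spectra. So the uniformity you single out as the main obstacle is exactly what Mahowald--Rezk supply in general. Your description of that criterion as also needing a bounded above syntomic vanishing line is inaccurate: finiteness suffices, and your second option is exactly the paper's proof.

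Your first option proceeds as follows.
\begin{itemize}
\item You write $C^f_{n+1}\mathbb{S}_{(p)}\simeq\mathrm{colim}_k\,\Sigma^{-d_k}V_k$ over a cofinal tower of generalized Moore spectra $V_k=\mathbb{S}/(p^{i_0},\dots,v_{n+1}^{i_{n+1}})$. The terms are Spanier--Whitehead duals, so $d_k=\sum_j i_j|v_j|+n+2$, not $n+2$ as in your first formula.
\item You push the range $[-1,N]$ of \autoref{thm:syntomic-BPn} through the Bockstein filtrations and the motivic spectral sequence. That spectral sequence has a finite $\mathrm{E}_2$-page and so converges strongly, and it shows $\pi_*(\TC(\BP\langle n\rangle)\otimes V_k)$ vanishes above $N+\sum_j(i_j-1)|v_j|$.
\item The top cell of $V_k$ controls the $\Sigma^{-1}\mathbb{Z}\otimes V_k$ term. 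Hence every $\Sigma^{-d_k}\K(\BP\langle n\rangle)\otimes V_k$ vanishes above a constant independent of $k$, and the colimit inherits this.
\end{itemize}
This buys an explicit bound, in the spirit of \cite{Yan25}, without the general fp-spectrum theory of \cite{MR99}. It costs you the exact top degree of syntomic cohomology and a cofinal tower of generalized Moore spectra whose even filtrations are the iterated quotients. The paper needs only qualitative finiteness.

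One caveat applies to both routes. The fiber sequence with cofiber $\Sigma^{-1}\mathbb{Z}\otimes V$ is the Dundas--Goodwillie--McCarthy sequence for the $p$-completed ring $\BP\langle n\rangle_p^{\wedge}$. For the $p$-local form, $\pi_1(\K(\BP\langle n\rangle)\otimes V)$ already contains the infinite group $\mathbb{Z}_{(p)}^{\times}/p^{i_0}$. So you should run your argument for $\BP\langle n\rangle_p^{\wedge}$ and then use the Dundas--Goodwillie--McCarthy pullback square. It shows that $\K(\BP\langle n\rangle)_p^{\wedge}$ and $\K(\BP\langle n\rangle_p^{\wedge})_p^{\wedge}$ differ by the fiber of $\K(\mathbb{Z}_{(p)})_p^{\wedge}\to\K(\mathbb{Z}_p)_p^{\wedge}$. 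That fiber has the Lichtenbaum--Quillen property by known results. For that term only boundedness above, not finiteness, is needed.
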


\begin{proof}
This follows from \autoref{thm:finite-TC} by \cite[Theorem~8.2(2)]{MR99} as observed in \cite[Theorem~3.1.3]{HW22} together with an arithmetic fracture square argument. 
\end{proof}

\subsection{Telescope}
Since it is now known that the telescope conjecture is false in general~\cite{BHLS23}, it becomes an interesting question to ask for which spectra $X$ is the localization map 
\[ L_{n+1}^fX\to L_{n+1}X\] 
is an equivalence. For example, it was conjectured by~\cite[Conjecture~3.9]{MR99} that the telescope conjecture holds for fp-spectra. 
We show that our results imply the telescope conjecture for the algebraic K-theory of $\BP\langle n \rangle$, giving some support for this conjecture in light of \autoref{thm:finite-TC}, which implies that the algebraic K-theory of $\BP\langle n\rangle$ is an fp-spectrum. 
\begin{cor}\label{cor:telescope}
Let $n\ge -1$. The localization map 
\[ L_{n+1}^f\K(\BP\langle n\rangle) \to L_{n+1}\K(\BP\langle n\rangle) \]
is an equivalence. 
\end{cor}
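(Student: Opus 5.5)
We reduce the telescope statement to a statement about $T(i)$- versus $K(i)$-localizations for $1\le i\le n+1$, and deduce it from the fact that $\K(\BP\langle n\rangle)$ is (after $p$-localization) an fp-spectrum of fp-type $n+1$ by \autoref{thm:finite-TC}. The rational parts of $L_{n+1}^f$ and $L_{n+1}$ agree, and both localizations are smashing and compatible with $p$-localization. So it suffices to treat $X=\K(\BP\langle n\rangle)_{(p)}$. By the chromatic fracture squares for $L^f_{n+1}$ and $L_{n+1}$, it then suffices to show that $L_{T(i)}X\to L_{K(i)}X$ is an equivalence for each $1\le i\le n+1$. Equivalently, $T(i)\otimes X$ should be $K(i)$-local, or $T(i)\otimes X$ should have vanishing $K(j)$-acyclic part.

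The first key step is to construct a suitable type $n+2$ complex $V=\mathbb{S}/(p^{i_0},v_1^{i_1},\dots,v_{n+1}^{i_{n+1}})$, for which \autoref{thm:finite-TC} gives $\pi_*(V\otimes X)$ finite. Hence $V\otimes X$ is bounded, so in particular $T(j)\otimes X=0$ for $j\ge n+2$ and $L^f_{n+1}X\simeq L^f_{\infty}X$ in the appropriate sense. Next, for $1\le i\le n+1$ we pick a type $i$ complex $F_i$ with a $v_i$-self map, chosen as an iterated cofiber of the preceding ones. Using the $\bF_p[v_{n+1}]$-finiteness from \autoref{mod-(p ,...,v_n) syntomic} (and its analogues for lower truncations obtained by Bockstein filtrations), we would show that $\pi_*(F_i\otimes X)$ is finitely generated over $\bZ/p^{k}[v_i^{\ell}]$ in high degrees. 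Concretely, the finiteness mod $(p^{i_0},\dots,v_{n+1}^{i_{n+1}})$ upgrades, via the $v_{n+1}$-Bockstein, to finite generation over $v_{n+1}$ of the type $n+1$ quotient. A standard result then applies: for a spectrum whose homotopy with coefficients in a finite type $i$ complex is finitely generated over $v_i$ up to bounded error, the telescope $v_i^{-1}F_i\otimes X$ is $K(i)$-local. This is \cite[Theorem~8.2]{MR99}, i.e.\ Mahowald--Rezk's result that fp-spectra satisfy $L^f_nX\simeq L_nX$ type statements above the fp-type, combined with their description of $L^f_{n+1}$ of an fp-spectrum as a finite-type object.

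The main obstacle is exactly this last implication. Mahowald--Rezk only give the telescope comparison in the range where the fp-type controls things. Moreover, the telescope conjecture itself fails in general \cite{BHLS23}, so the argument must genuinely use more than fp-type. What we would try first is to use the $\bE_0$-$\K(\MU)$-algebra structure and the unit map from $\K(\BP)$ or $\TC(\MU)$. Since $T(i)\otimes \K(\BP\langle n\rangle)$ is a module over $T(i)\otimes \K(\mathbb{S})$, one could instead exploit the known result that $L_{T(i)}\K(R)$ depends only on $L_{T(i-1)\oplus T(i)}R$ (purity) together with $L_{T(i)}\BP\langle n\rangle=0$ for $i>n$, and that $\K(L_{K(i)}$-data$)$ is $K(i)$-local for the relevant inputs. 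If that fails, fall back on a direct check: the motivic spectral sequence shows that $v_{i}$-periodic homotopy of $F_i\otimes \TC(\BP\langle n\rangle)$ is detected by $\pi_*\gr_{\mot}$, which is a module over $\pi_*\gr_{\ev}$ of the $K(i)$-local sphere after inverting $v_i$. This would force the map $L_{T(i)}\to L_{K(i)}$ to be an isomorphism on homotopy. Finally we transfer from $\TC$ to $\K$ using the fiber sequence of \cite[Theorem~3.1.14]{DGM13}, whose third term $\Sigma^{-1}\bZ$ has the same $L^f_{n+1}$ and $L_{n+1}$.
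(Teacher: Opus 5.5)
Your reduction is fine: work $p$-locally, dispose of the rational part, and use the fracture squares to reduce to $L_{T(i)}X\simeq L_{K(i)}X$ for $1\le i\le n+1$. The gap is in the step that is supposed to do the work.

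\cite[Theorem~8.2]{MR99} is a Lichtenbaum--Quillen statement; it is what the paper uses for \autoref{cor:LQC-K-theory}. It bounds the fiber of $X\to L^f_{n+1}X$ for fp-spectra and says nothing about $L^f_{n+1}X$ versus $L_{n+1}X$. That fp-spectra satisfy the telescope conjecture is \cite[Conjecture~3.9]{MR99}, which is open, and the paper offers this corollary as evidence for it. So fp-type $n+1$ from \autoref{thm:finite-TC} cannot be the input. Moreover, for $i\le n$ your claim that $\pi_*(F_i\otimes X)$ is finitely generated over $\mathbb{Z}/p^k[v_i^{\ell}]$ is false. If it held, $F_{i+1}\otimes X$ would be bounded above, giving fp-type $\le i$ and contradicting redshift.

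Your fallbacks do not repair this:
\begin{itemize}
\item Purity only identifies $L_{T(i)}\mathrm{K}(R)$ with $L_{T(i)}\mathrm{K}(L_{T(i-1)\oplus T(i)}R)$ and gives no $K(i)$-locality. ``K-theory of $K(i)$-local data is $K(i)$-local'' is not an available result; for $i=n+1$ it is essentially the statement you are trying to prove.
\item The remark that $\pi_*\mathrm{gr}_{\mathrm{mot}}$ is a module over $\pi_*\mathrm{gr}_{\mathrm{ev}}$ ``of the $K(i)$-local sphere'' does not by itself force $L_{T(i)}\simeq L_{K(i)}$.
\end{itemize}

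The missing idea is that the relevant property is not finiteness but \emph{bounded motivic cohomological dimension}, i.e.\ a horizontal vanishing line.
\begin{itemize}
\item \autoref{thm:syntomic-BPn} places the mod $(p,v_1,\dots,v_{n+1})$ syntomic cohomology in Adams weights $[0,n+2]$.
\item Since $p,v_1,\dots,v_{n+1}$ all have Adams weight $0$, Bocksteins give the same bound modulo $(p,\dots,v_{i-1})$, and the bound survives inverting $v_i$.
\item Roughly, a uniform horizontal vanishing line in this $\mathrm{MU}$-based, $\mathrm{fil}^*_{\mathrm{ev}}\mathbb{S}$-linear filtration lets $v_i$-inversion commute with the limit. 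It also makes the telescope built in finitely many steps from $\mathrm{MU}$-modules, on which $T(i)$- and $K(i)$-localization agree.
\end{itemize}

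The paper does not run this by hand. It invokes the general criterion \cite[Proposition~4.3.2]{AKHW24}, and its proof consists of checking that criterion's hypotheses:
\begin{itemize}
\item finiteness and height $n$ of $\mathrm{BP}\langle n\rangle^{\wedge}_p$;
\item $\mathrm{THH}$ bounded below (\autoref{prop:Hocschild-MaySSBPn});
\item bounded $\mathrm{MU}$-based motivic cohomological dimension of $\mathrm{THH}(\mathrm{BP}\langle n\rangle)$ (\autoref{prop:thh-bpn});
\item evenness of $\mathrm{THH}(\mathrm{BP}\langle n\rangle/\mathrm{MU})$ (\autoref{algebraic eff map}), transported along the eff map $\mathrm{THH}(\mathrm{MU})\to\mathrm{THH}(\mathrm{MU}/\mathrm{MW})$.
\end{itemize}
To make your argument work, replace the fp-type/\cite{MR99} step with either this citation or an explicit vanishing-line argument as sketched above.
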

\begin{proof}
Consider the eff map of cyclotomic $\mathbb{E}_\infty$-rings $\THH(\MU)\to \THH(\MU/\MW)$. It is clear that $\pi_*\BP\langle n\rangle_p^{\wedge}$ is a finitely generated $\bZ_p$-module and that $R$ has height $n$ in either case. 
We computed that $\BP\langle n\rangle_p^{\wedge}$ has bounded below topological Hochschild homology in \autoref{prop:Hocschild-MaySSBPn}, the $\THH(\MU)$-module $\THH(\BP\langle n\rangle)$ 
has bounded $\MU$-based motivic cohomological dimension in \autoref{prop:thh-bpn} and that 
\[\THH(\BP\langle n\rangle/\MU)=\THH(R)\otimes_{\THH(\MU)}\MU\] 
is even in \autoref{algebraic eff map} and consequently 
\[\THH(\BP\langle n\rangle/\MU)=\THH(R)\otimes_{\THH(\MU)}\THH(\MU/\MW)\] 
is even. Therefore, the result follows from \cite[Proposition~4.3.2]{AKHW24}. 
\end{proof}

\subsection{Redshift}
We now prove redshift for algebraic K-theory of $\BP\langle n\rangle$. 

\begin{defin}\label{def-height}
We say a spectrum $X$ has \emph{height $n+1$} if $T(n+1)_*X\ne 0$ and $T(m)_*X=0$ for all $m>n+1$. 
\end{defin}
We say a $\mathbb{E}_1$-ring satisfies redshift if it is height $n$ and its algebraic K-theory has height $n+1$. By~\cite{BSY22}, we know for all $\mathbb{E}_\infty$-rings that if $R$ has height $n$, then $\mathrm{K}(R)$ has height $n+1$. 
It remains an interesting question of for which $\mathbb{E}_1$-rings satisfy redshift, see for example~\cite{AKHW24}. We provide a new family of examples. Note that $\BP\langle n\rangle$ has height $n$. 
\begin{cor}\label{cor:redshift}
The algebraic K-theory of $\BP\langle n\rangle$ has height $n+1$. 
\end{cor}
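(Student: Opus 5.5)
We have to show two things: that $T(n+1)\otimes \K(\BP\langle n\rangle)$ is nonzero, and that $T(m)\otimes\K(\BP\langle n\rangle)=0$ for all $m>n+1$.

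\emph{Vanishing above height $n+1$.} This should follow from \autoref{cor:LQC-K-theory}. The fiber of $\K(\BP\langle n\rangle)_{(p)}\to L_{n+1}^f\K(\BP\langle n\rangle)_{(p)}$ is bounded above, and $T(m)$-localization kills bounded above spectra for $m\ge 1$. Moreover, $L_{n+1}^f$-local spectra are $T(m)$-acyclic for $m>n+1$. Together these give $T(m)_*\K(\BP\langle n\rangle)=0$ for $m>n+1$. Rationalization plays no role, since $T(m)$ is $p$-local and torsion.

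\emph{Nonvanishing at height $n+1$.} First choose a type $n+1$ generalized Smith--Toda complex $V=\mathbb{S}/(p^{i_0},v_1^{i_1},\cdots,v_n^{i_n})$ with a $v_{n+1}^{k}$-self map. It then suffices to show that $v_{n+1}$ is not nilpotent on $\pi_*(\K(\BP\langle n\rangle)\otimes V)$. By the Dundas--Goodwillie--McCarthy fiber sequence used in \autoref{thm:finite-TC}, whose third term $\Sigma^{-1}\mathbb{Z}\otimes V$ is $T(n+1)$-acyclic, we may replace $\K$ by $\TC$.

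To detect $v_{n+1}$-periodic classes we use the motivic filtration. By \autoref{mod-(p ,...,v_n) syntomic}, the mod $(p,v_1,\cdots,v_n)$ syntomic cohomology is a free $\bF_p[v_{n+1}]$-module on finitely many generators. In particular the unit $1$ generates a free $\bF_p[v_{n+1}]$-submodule in Adams weight $0$. Working with the exponents $i_j$, we pass to $\pi_*\grmot^*\TC(\BP\langle n\rangle)\otimes \gr_{\ev}^*V$ via the finite Bockstein filtrations, as in \autoref{thm:finite-TC}. The conclusion to aim for is that $v_{n+1}^{kN}\cdot 1$ is nonzero on the $\EE_2$-page for all $N$.

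The main obstacle is that motivic differentials or Bockstein differentials could hit these classes, or truncate them, in the abutment. We handle it with a degree/weight argument. The $v_{n+1}$-powers of $1$ sit on the $0$-line, so they cannot be targets, because differentials raise Adams weight. They can only support differentials into a bounded range of Adams weights, namely $\le n+2$ plus the Bockstein shift. Since the module is finitely generated and free over $\bF_p[v_{n+1}]$, we can instead argue through the unit map. The element $v_{n+1}^k\in\pi_*V$ maps to $\pi_*\TC\otimes V$, and it is enough that its powers are nonzero there.

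For this, compose with $\TC(\BP\langle n\rangle)\to\TC(\bF_p)$, or better compare via $\TC^-$ and the canonical map to $\TP$. In the latter, by \autoref{thm:Hodge-Tateresults} and \autoref{prop:prismatic-bpn}, $v_{n+1}$ is detected by $t\mu^{p^{n+1}}$, which is a unit in the $\mu$-inverted Hodge--Tate target. Consequently $\TP(\BP\langle n\rangle)\otimes V$, and hence a module over it receiving $\TC\otimes V$, has $v_{n+1}$ acting invertibly and nonzero. Alternatively, we can appeal directly to the general criterion of \cite[\S~4]{AKHW24}, as in the proof of \autoref{cor:telescope}: eff map, bounded motivic cohomological dimension, and evenness all hold here. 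That criterion yields redshift once the syntomic cohomology is $v_{n+1}$-torsion free and nonzero, which is exactly \autoref{mod-(p ,...,v_n) syntomic}.
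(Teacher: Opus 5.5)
The gap is in the step meant to show that $v_{n+1}^{N}\cdot 1\neq 0$ in $\pi_*(\mathrm{TC}(\mathrm{BP}\langle n\rangle)\otimes V)$.

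\textbf{The step that fails.} You argue that $v_{n+1}$ is detected by $t\mu^{p^{n+1}}$, call this a unit, and conclude that $v_{n+1}$ acts invertibly on $\mathrm{TP}(\mathrm{BP}\langle n\rangle)\otimes V$. Neither claim holds.
\begin{itemize}
\item In the Hodge--Tate $t$-Bockstein spectral sequence of \autoref{thm:Hodge-Tateresults}, the $\mathrm{E}_2$-page is $\mathbb{F}_p\langle \lambda_1,\ldots,\lambda_{n+1}\rangle[\mu^{\pm p^{n+1}},t]$, and $t$ is not inverted there.
\item In the $t^{\pm 1}$-spectral sequence of \autoref{prop:prismatic-bpn}, $\mu^{p^{n+1}}$ is only polynomial.
\item So $t\mu^{p^{n+1}}$ is a unit in neither spectral sequence.
\item $v_{n+1}$ cannot act invertibly. \autoref{prop:prismatic-bpn} shows that the cofiber of $v_{n+1}$, namely prismatic cohomology mod $(p,v_1,\ldots,v_{n+1})$, is the nonzero module $\mathbb{F}_p\langle\lambda_1,\ldots,\lambda_{n+1}\rangle[t^{\pm p^{n+1}}]$.
\item Already for $n=-1$, $p$ is not a unit in $\pi_*\mathrm{TP}(\mathbb{F}_p)=\mathbb{Z}_p[t^{\pm1}]$.
\end{itemize}
What you actually need is weaker: $v_{n+1}^N\cdot\mathrm{can}(1)\neq 0$ in $\pi_*(\mathrm{TP}\otimes V)$. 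Proving even that requires a detection argument in a convergent spectral sequence for $\mathrm{TP}\otimes V$, which is the argument the detour was meant to avoid. Your fallbacks do not close the gap either. $\mathrm{TC}(\mathbb{F}_p)$ is $T(n+1)$-acyclic for $n\ge 0$, so it cannot see $v_{n+1}$-periodicity. The appeal to a ``criterion of AKHW24'' is a guess about a reference, not an argument.

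\textbf{The fix.} The line you started and then abandoned works, and it is the paper's argument. Take the class $v^N\iota\in\pi_*V$ and map it under the unit to $\pi_*(\mathrm{TC}\otimes V)$.
\begin{itemize}
\item On the motivic $\mathrm{E}_2$-page its image is $v_{n+1}^{kN}\cdot 1$, in Adams weight $0$.
\item This class is the image of a permanent cycle, so it is a permanent cycle. Your worry about it supporting differentials never arises.
\item It cannot be a boundary, because it has Adams weight $0$ and everything lies in Adams weight $\ge 0$.
\item It is nonzero on $\mathrm{E}_2$: its reduction along $\mathrm{gr}^*_{\mathrm{ev}}V\to\mathrm{gr}^*_{\mathrm{ev}}\mathbb{S}/(p,v_1,\ldots,v_n)$ is a nonzero element of the free summand $\mathbb{F}_p[v_{n+1}]\cdot 1$ in \autoref{mod-(p ,...,v_n) syntomic}.
\end{itemize}
Convergence of the motivic spectral sequence for $\mathrm{TC}$ then gives $v^N\iota\neq 0$ for all $N$.

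Your upper bound is fine and takes a genuinely different route from the paper. The paper cites the general vanishing theorem of Land--Mathew--Meier--Tamme. You instead deduce $T(m)$-acyclicity for $m>n+1$ from \autoref{cor:LQC-K-theory}. That relies on the specific computation but is self-contained given the paper's results.
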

\begin{proof}
By \cite[Corollary~4.12]{LMMT20}, it suffices to prove that the algebraic K-theory of $\BP\langle n\rangle$ has height at least $n+1$. 
Since $\bE_{0}$ algebra structure on $\TC^{-}(\BP\langle n \rangle)$ factors  
\[
\bS \to \K(\BP\langle n\rangle )\to \TC(\BP\langle n\rangle)\to \TC^{-}(\BP\langle n\rangle/\MU)
\]
it suffices to prove that a class in motivic filtration $0$ of $\TC(\BP\langle n\rangle)$ in the kernel of $\can-\varphi$ is not $(p^{i_{0}},\cdots ,v_{n}^{i_{n}})$-torsion for 
any postive integers $i_{0},\cdots ,i_{n}$. and that this class is $v_{n+1}$-periodic. 
Choosing the class $1\in \pi_{0}\TC(\BP \langle n\rangle)$ in the image of the element $1\in \pi_{0}\bS$ then we can conclude that this class has 
both of these properties by \autoref{mod-(p ,...,v_n) syntomic}. 
\end{proof}
This extends~\cite[Corollary~5.0.2]{HW22} to arbitrary $\mathbb{E}_1$-$\MU$-algebra forms of $\BP\langle n\rangle$.

\bibliographystyle{alpha}
\bibliography{kbpn}
\end{document}